\documentclass{article}
\usepackage{a4wide}
\usepackage[utf8]{inputenc}
\usepackage{stmaryrd,mathrsfs,bm,amsthm,mathtools,yfonts,amssymb}
\usepackage{color}

\newtheorem{Theorem}{Theorem}[section]
\newtheorem{Proposition}{Proposition}[section]
\newtheorem{Lemma}{Lemma}[section]
\newtheorem{Corollary}{Corollary}[section]
\newtheorem{Definition}{Definition}[section]
\newtheorem{Remark}{Remark}[section]

\newcommand{\bTheorem}[1]{
\begin{Theorem} \label{T#1} }
\newcommand{\eT}{\end{Theorem}}

\newcommand{\bProposition}[1]{
\begin{Proposition} \label{P#1}}
\newcommand{\eP}{\end{Proposition}}

\newcommand{\bLemma}[1]{
\begin{Lemma} \label{L#1} }
\newcommand{\eL}{\end{Lemma}}

\newcommand{\bCorollary}[1]{
\begin{Corollary} \label{C#1} }
\newcommand{\eC}{\end{Corollary}}

\newcommand{\bFormula}[1]{
\begin{equation} \label{#1}}
\newcommand{\eF}{\end{equation}}

\newcommand{\pat}{\partial_t}
\newcommand{\diver}{\operatorname{div}}

\newcommand{\vu}{\vc{u}}
\newcommand{\vc}[1]{{\bf #1}}

\newcommand{\vq}{\vc{q}}

\newcommand{\vv}{\vc{v}}

\newcommand{\vw}{\vc{w}}

\newcommand{\TT}{\mathbb T}
\newcommand{\FF}{\mathfrak F}
\newcommand{\PP}{\mathbb P}

\newcommand{\vG}{\vc{G}}
\newcommand{\esssup}{{\rm esssup}}

\title{Stochastic forcing in hydrodynamic models with non-local interactions}
\author{Pavel Ludv\'ik$^\clubsuit$ \and V\'aclav M\'acha$^\spadesuit$}

\begin{document}
\maketitle
\bigskip

\centerline{$^{\clubsuit}$ Department of Mathematical Analysis and Applications of Mathematics,}
\centerline{Faculty of Science, Palack\'y University Olomouc}
\centerline{17. listopadu 12, 771 46 Olomouc, Czech Republic}
\centerline{email:pavel.ludvik@upol.cz}
\bigskip

\centerline{$^{\spadesuit}$ Institute of Mathematics of the Czech Academy of Sciences}
\centerline{\v Zitn\' a 25, 115 67 Praha 1, Czech Republic}
\centerline{email:macha@math.cas.cz}

\abstract{The hydrodynamical model of the collective behavior of animals consists of the Euler equation with additional non-local forcing terms representing the repulsive and attractive forces among individuals. This paper deals with the system endowed with an additional white-noise forcing and an artificial viscous term. We provide a proof of the existence of a dissipative martingale solution -- a cornerstone for a subsequent analysis of the system with stochastic forcing.}

\section{Introduction}
The modeling of the collective movement of animals is recently a popular topic. Its hydrodynamical model has a structure of  the Euler system equipped with additional forcing terms -- it can be obtained as a mean field limit of the Cucker-Smale flocking model \cite{CuSm}. Because of its hyperbolic nature, the aforementioned system is ill-posed even for smooth initial data. This was proven in the pioneering paper by DeLellis and Szekelehydi for the incompressible Euler system in \cite{DeSz} (check also \cite{ChKrMaSc} for smooth initial data and the compressible Euler system) and, later on, Carillo, Feireisl, Gwiazda and Swierczewska-Gwiazda extended the validity of this result also on the Euler system with non-local interaction in \cite{CaFeGwSw}.

In order to overcome the above-mentioned ill-possedness, we add an artificial viscous term. Therefore we deduce regularity properties allowing to conclude a proof of the existence of a solution. There is a hope that the elliptic  term does not play a significant role once the viscosity is assumed to be almost zero. The question of an inviscid limit will be addressed in a forthcoming paper. Here we would like to mention that the inviscid limit is often considered to be one of the criteria of admissibility for the Euler system. 

We tackle the Navier-Stokes system with extra terms representing the non-local interactions in a domain $(0,T)\times \mathbb T$ where $\mathbb T$ denotes a 3-dimensional torus, i.e., 
$
\TT = ([-1,1]|_{\{-1,1\}})^3
$. Namely,
\begin{equation}\label{main.sys}
\begin{split}
\pat \varrho + \diver(\varrho \vu)  = & 0,\\
\pat(\varrho \vu) +\diver (\varrho \vu \otimes \vu) + \nabla p(\varrho) - \diver S(D\vu) = &  -\varrho \int_{\TT}\nabla_x K(x-y)\varrho (y)\ {\rm d} y\\
&
+ \varrho \int_{\TT}\psi(x-y)\varrho(y)(\vu(y) - \vu(x))\ {\rm d}y.
\end{split}
\end{equation}
Here $\varrho:(0,T)\times \TT\mapsto [0,\infty)$ and $\vu:(0,T)\times \TT\mapsto\mathbb R^3$ are unknowns, namely a density and a velocity respectively. 

Further, $p(\varrho)$ is a given pressure satisfying
\begin{equation}
\begin{split}\label{p.law}
p\in C([0,\infty))\cap C^2(0,\infty),\ p(0)=0,\ p'(\varrho)>0\ \mbox{whenever}\ \varrho>0\\
\lim_{\varrho\to \infty} \frac{p'(\varrho)}{\varrho^{\gamma-1}} = p_\infty >0,\ \mbox{for some }\gamma>\frac32.
\end{split}
\end{equation}
We  introduce a potential $P$ by a formula
\begin{equation}\label{pres.potential}
P(\varrho) = \varrho\int_1^\varrho \frac{p(s)}{s^2}\ {\rm d}s.
\end{equation} 
Further, $S$ denotes a dissipative term and we assume that
\begin{equation}\label{viscosity.ass}
S(D\vu)= \mu(D\vu)  + \left(\lambda + \mu\right) \diver \vu \mathbb I,\ \mu>0
\end{equation}
where $D\vu$ is a symmetric part of the gradient of $\vu$ and $\mathbb I$ is the $3\times 3$ identity matrix and $\mu>0$, $\lambda - \frac23 \mu \geq 0.$

Next, $K = K(x)$ and $\psi = \psi(x)$ represent the non-local interaction forces acting on the medium. More specifically, the kernel $K$ includes the repulsive-attractive interaction force among individuals and $\psi$ gives the local averaging measuring the consensus in orientation. For more details we refer to \cite{CaFeGwSw}. These functions are assumed to satisfy
\begin{equation*}
\begin{split}
K\in C^2(\TT),\ \psi\in C^1(\TT),\ \psi\geq 0,\\
K\ \mbox{and}\ \psi\ \mbox{symmetric, i.e.,}\ K(x) = K(-x),\ \psi(x) = \psi(-x).
\end{split}
\end{equation*}

Taking $\mu, \lambda=0$, the system describes a collective behavior of animals -- dynamics of a school of fishes or a flock of birds -- the interested reader may find more in \cite{CaCaRo} and \cite{KaMeTr} where is a rigorous derivation of the Cucker-Smale flocking model. It is also worthwhile to mention that taking $K=0$, $\psi=0$, we obtain the compressible Navier-Stokes equation with pressure which is more general then the one considered in \cite{BrFeHo}.

The existence of a solution to such model has been already studied (even with an inviscid limit) in \cite{BrMa}.

Our aim is to look into the model equipped with an additional stochastic forcing term. Namely, we consider the following version of \eqref{main.sys}:
\begin{equation}\label{NS.stochastic}
\begin{split}
{\rm d}\varrho + \diver(\varrho\vu)\ {\rm d}t &= 0\\
{\rm d}\varrho \vu +\diver (\varrho \vu \otimes \vu)\ {\rm d}t + \nabla p(\varrho)\ {\rm d}t - \diver S(D\vu)\ {\rm d}t = &  -\varrho \int_{\TT}\nabla_x K(x-y)\varrho (y)\ {\rm d} y \ {\rm d}t\\
&
+ \varrho \int_{\TT}\psi(x-y)\varrho(y)(\vu(y) - \vu(x))\ {\rm d}y\ \ {\rm d}t\\
& +\mathbb G(\varrho,\varrho\vu)\ {\rm d}W \mbox{ in }(0,T)\times \TT.
\end{split}
\end{equation}
We consider unknowns to be dependent also on $\omega$ from a stochastic basis $(\Omega, \FF, (\FF_t)_{t\geq0})$ which allows us to interpret them as random variables or stochastic processes (or random distributions in a general setting).
Further, it is assumed that 
\begin{equation}\label{G.law.1}
\mathbb G(\varrho,\varrho\vu)\ {\rm d}W = \sum_{k=1}^\infty \vG_k(x,\varrho,\varrho\vu) \ {\rm d}W_k,
\end{equation}
where $W = (W_k)_{k\in\mathbb{N}}$ is a cylindrical $(\FF_t)$-Wiener process in a separable Hilbert space $\mathfrak U$ given by a formal sum $W(t) = \sum_{k=1}^{\infty} e_k W_k(t)$, where $(e_k)_{k\in\mathbb{N}}$ is a complete orthonormal system in $\mathfrak U$, $(W_k)_{k\in\mathbb{N}}$ is a sequence of mutually independent real-valued $(\FF_t)$-Wiener processes, and $\mathbb G(\varrho,\varrho\vu):\mathfrak U\mapsto L^1(\mathbb T)$. Further, the coefficients $\mathbf G_k=\mathbf G_k(x,\varrho,\vu) = \mathbb G(x,\varrho,\varrho\vu) e_k$ are continuously differentiable and we assume the existence of a sequence $\{g_k\}_{k=1}^\infty\subset [0,\infty)$ with $\sum_{k=1}^\infty g_k^2<\infty$ such that
\begin{equation}\label{G.law.2}
|\vG_k(x,\varrho,\vq)|\leq g_k(\varrho + |\vq|),\quad |\nabla_{\varrho,\vq} \vG_k(x,\varrho,\vq)|\leq g_k.
\end{equation}
We deal with the existence of a dissipative martingale solution (see Definition \ref{def.m.sol}). The paper is organized as follows. The rest of this chapter contains basic definitions, notions, and also the main theorem. The approximate system is introduced in Section \ref{approx.sys}, which also contains a proof of the existence of solution which is done by means of the Galerkin approximations. Section \ref{sec.vat} is devoted to the appropriate limits of the approximations. 

\subsection{Preliminaries}

For $p\geq 1$, the Lebesgue spaces of function $f:X\mapsto Y$ are denoted by $L^p(X,Y)$ and the Sobolev spaces of $k$-times differentiable functions by $W^{k,p}(X,Y)$. The dual space to $W^{k,p}(X,Y)$ is denoted by $W^{-k,p'}(X,Y)$ where $p' = \frac{p}{p-1}$. We will omit the set $Y$ in case we avoid any misunderstandings. We would like to mention that vector-valued quantities are denoted by bold letters whereas scalar-valued quantities are denoted by usual (non-bold) letters.

Let $\alpha\in (0,1)$. The spaces of $\alpha$-H\"older continuous functions $f:X\to Y$ are denoted by $C^{\alpha}(X,Y)$. Furthermore we use a natural convention that if $\alpha \in \mathbb N$ then $C^{\alpha}(X,Y)$ denotes the space of $\alpha$-times differentiable functions and for $\alpha \in (1,\infty)\setminus \mathbb N$,  $C^{\alpha}(X,Y)$ denotes functions whose derivative of order $\lfloor \alpha\rfloor$ is $(\alpha - \lfloor \alpha \rfloor)$-H\"older continuous. 
The space of functions $f:X\to Y$ whose derivative of arbitrary order is bounded is denoted by $C^\infty(X,Y)$. Spaces $C^\alpha_c (X,Y)$ are spaces of compactly supported functions belonging to $C^\alpha(X,Y)$. 

Before stating a definition of a solution let recall Ito's formula which is of extensive use.

\begin{Lemma}[consequence of {\cite[Theorem~4.17]{DPZ92}}]\label{thm:ito}
Let $X$ be a separable Hilbert space, $U$ and $V$ be $X$-valued stochastic processes, let $W = \sum_{k=1}^\infty e_k W_k$ be a cylindrical Wiener process in a separable Hilbert space $\mathfrak{U}$, let $G,\ H$ be  $L_2(\mathfrak U,X)$-valued stochastically integrable processes and let $g,h:[0,T]\mapsto X$ be progressively measurable Bochner integrable processes such that
\begin{equation*}
\begin{split}
U(t) & = U(0) + \int_0^t g(s)\ {\rm d}s + \int_0^t G(s) {\rm d}W(s)\\
V(t) & = V(0) + \int_0^th(s)\ {\rm d}s + \int_0^t H(s) {\rm d}W(s).
\end{split}
\end{equation*}
Then,
\begin{multline*}
\langle U(t),V(t)\rangle = \langle U(0),V(0)\rangle + \int_0^t \langle V(s), g(s)\rangle \ {\rm d}s  + \int_0^t \langle V(s), G(s) {\rm d}W(s)\rangle \\ +\int_0^t \langle U(s), h(s)\rangle \ {\rm d}s + \int_0^t \langle U(s), H(s)\ {\rm d}W(s)\rangle + \sum_{k=1}^\infty\int_0^t \langle G(s) e_k, H(s) e_k\rangle \ {\rm d}s
\end{multline*}
and
\begin{multline*}
F(U(t)) = F(U(0)) + \int_0^t  \langle F'(U(s)), g(s)\rangle \ {\rm d}s + \int_0^t \langle F'(U(s)), G(s)\ {\rm d}W(s)\rangle\\  + \frac 12 \int_0^t {\rm Tr} (G(s)^* F''(U(s)) G(s))\ {\rm d}s
\end{multline*}
whenever $F:X\mapsto \mathbb R$ is an arbitrary function whose derivatives $F'$ and $F'' $ are uniformly continuous on bounded subsets of $X$. 
\end{Lemma}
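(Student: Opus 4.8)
The plan is to obtain both identities from \cite[Theorem~4.17]{DPZ92}, which supplies the functional It\^o formula for a sufficiently regular scalar $F$, by combining it with a routine localization argument and, for the product rule, with a passage to the product Hilbert space. First I would record that both $U$ and $V$ admit continuous modifications: the Bochner integrals $\int_0^\cdot g\,{\rm d}s$ and $\int_0^\cdot h\,{\rm d}s$ are continuous in $X$, and the stochastic integrals $\int_0^\cdot G\,{\rm d}W$, $\int_0^\cdot H\,{\rm d}W$ have continuous modifications by the maximal inequality for Hilbert-space-valued stochastic integrals. Hence the paths of $U$ and $V$ are a.s.\ bounded on $[0,T]$, so the stopping times $\tau_R := \inf\{t\in[0,T] : \|U(t)\|_X \geq R\ \text{or}\ \|V(t)\|_X \geq R\}$ (with $\inf\emptyset:=T$) increase to $T$ almost surely as $R\to\infty$.

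I would first establish the functional formula (the second displayed identity). Fix $R>0$ and pick $F_R\in C^2(X)$ that coincides with $F$ on $\{\|x\|_X\leq R\}$ and has globally bounded, uniformly continuous first and second derivatives; such an $F_R$ exists because $F'$, $F''$ are uniformly continuous, hence bounded, on bounded sets --- e.g.\ multiply $F$ by $\chi(\|\cdot\|_X^2)$ for a smooth cut-off $\chi$ equal to $1$ on a neighbourhood of $[0,R^2]$ and vanishing near infinity. Applying \cite[Theorem~4.17]{DPZ92} to $F_R$ and to the stopped process $U(\cdot\wedge\tau_R)$ gives the stated identity with $U$, $F$ replaced by $U(\cdot\wedge\tau_R)$, $F_R$; since $F_R$, $F_R'$, $F_R''$ agree with $F$, $F'$, $F''$ on the ball in which the stopped process stays, this is exactly the asserted formula on $[0,\tau_R]$. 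Letting $R\to\infty$, using $\tau_R\nearrow T$ and dominated convergence in each term (the integrands are a.s.\ bounded on $[0,T]$ because the paths are bounded and $F'$, $F''$ are bounded on the bounded range of $U$), yields the formula on $[0,T]$.

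For the product rule I would then apply this functional formula, on the product Hilbert space $\mathcal H := X\times X$, to the $\mathcal H$-valued process $Z:=(U,V)$, which satisfies $Z(t)=Z(0)+\int_0^t(g,h)\,{\rm d}s+\int_0^t(G,H)\,{\rm d}W$ with $(G,H)\in L_2(\mathfrak U,\mathcal H)$, and to the bilinear map $F(u,v):=\langle u,v\rangle_X$. This $F$ is smooth, with $F'(u,v)=(v,u)$ and $F''$ the constant, bounded coordinate-swap operator on $\mathcal H$, so its derivatives are uniformly continuous on bounded sets and the previous step applies. Substituting, the drift term becomes $\int_0^t(\langle V,g\rangle+\langle U,h\rangle)\,{\rm d}s$, the stochastic term $\int_0^t(\langle V,G\,{\rm d}W\rangle+\langle U,H\,{\rm d}W\rangle)$, and the trace correction, evaluated at the noise coefficient $(G,H)$, equals $\tfrac12\sum_k(\langle He_k,Ge_k\rangle+\langle Ge_k,He_k\rangle)=\sum_k\langle Ge_k,He_k\rangle$, which is precisely the last term in the statement. (Alternatively one can bypass the product space by polarization, writing $\langle U,V\rangle=\tfrac14(\|U+V\|_X^2-\|U-V\|_X^2)$ and applying the functional formula with $F=\|\cdot\|_X^2$ to $U\pm V$.)

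The only genuinely non-bookkeeping point is the localization: one must check that the cut-off $F_R$ can be produced inside the exact regularity class for which \cite[Theorem~4.17]{DPZ92} is stated, that the $\tau_R$ are $(\FF_t)$-stopping times, and that the passage $R\to\infty$ is legitimate for every term --- in particular for the stochastic integrals, which relies on the localization property of the It\^o integral. The computation of $F'$, $F''$ and of the trace on $\mathcal H$, and the identification of the resulting terms with those in the statement, are then immediate.
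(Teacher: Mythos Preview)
Your derivation is correct and is the standard route: the second displayed identity is precisely \cite[Theorem~4.17]{DPZ92} in the time-independent case, and the product rule follows by applying it on $X\times X$ to $F(u,v)=\langle u,v\rangle_X$, exactly as you do (the polarization alternative works equally well). The paper itself provides no proof beyond the citation, so there is nothing to compare; your argument simply supplies the details the reference leaves implicit. One minor remark: since \cite[Theorem~4.17]{DPZ92} already assumes only that $F$, $F'$, $F''$ are uniformly continuous on bounded subsets, your localization step with the cut-off $F_R$ and the stopping times $\tau_R$ is in fact redundant---the theorem applies directly to $F$ without truncation---though it does no harm.
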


Another important tool will allow us to identify the stochastic limit integral. Before we do, 
let us define the space $\mathfrak{U}_0\supset\mathfrak{U}$ which is a normed space generated by $\mathfrak{U}$, considered with a complete orthonormal system $\{e_k\}_{k=1}^{\infty}$, as $\mathfrak{U}_0 := \left\{ v = \sum_{k=1}^{\infty} \alpha_k e_k; \sum_{k=1}^{\infty} \frac{\alpha_k^2}{k^2}<\infty\right\}$ with a norm $\|v\|_{\mathfrak{U}_0} := \sum_{k=1}^{\infty} \frac{\alpha_k^2}{k^2}$, where $v = \sum_{k=1}^{\infty} \alpha_k e_k$. The reason of considering this auxiliary space is to give a proper meaning to a sum in a definition of a cylindrical Wiener process $W(t) = \sum_{k=1}^{\infty} e_k W_k$.

\begin{Lemma}[{\cite[Lemma 2.6.6]{BrFeHo}}; a consequence of {\cite[Lemma 2.1]{DGHT11}}]\label{L266}
Let $(\Omega,\FF,\mathbb{P})$ be a complete probability space. For $n\in\mathbb{N}$, let $W_n$ be an $(\FF_t^n)$-cylindrical Wiener process and let $\mathbf{G}_n$ be an $(\FF_t^n)$-progressively measurable stochastic process such that $\mathbf{G}_n\in L^2([0,T],L_2(\mathfrak{U},W^{l,2}(\TT))$ a.s. Suppose that
\begin{equation*}\begin{split}
W_n &\to W \quad \mbox{in $C([0,T],\mathfrak{U}_0)$ in probability,}\\
\mathbf{G}_n &\to \mathbf{G} \quad \mbox{in $L_2(\mathfrak{U},W^{l,2}(\TT))$ in probability,}
\end{split}
\end{equation*}
where $W = \sum_{k=1}^{\infty} e_k W_k$. Let $(\FF_t)_{t\in\mathbb{N}}$ be the filtration given by
\[
\FF_t = \sigma \left(\bigcup_{k=1}^{\infty} \sigma_t[\mathbf{G}e_k] \cup \sigma_t[W_k]\right).
\]
Then after a possible change on a set of zero measure in $\Omega\times(0,T)$, $\mathbf{G}$ is $(\FF_t)$-progressively measurable, and
\[
\int_0^{\cdot} \mathbf{G}_n {\rm d} W_n \to \int_0^{\cdot} \mathbf{G}{\rm d}W \quad \mbox{ in $L_2(\mathfrak{U},W^{l,2}(\TT))$ in probability.}
\]

\end{Lemma}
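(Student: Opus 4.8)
The plan is to follow the template of \cite[Lemma~2.1]{DGHT11} (cf.\ also \cite[Lemma~2.6.6]{BrFeHo}); I sketch the steps. First I would reduce the asserted convergence in probability to almost sure convergence by the standard subsequence principle: it suffices to check that every subsequence admits a further subsequence along which $\int_0^\cdot\mathbf G_n\,\mathrm dW_n\to\int_0^\cdot\mathbf G\,\mathrm dW$ almost surely, so from now on I may assume that $W_n\to W$ in $C([0,T],\mathfrak U_0)$ and $\mathbf G_n\to\mathbf G$ in $L^2([0,T],L_2(\mathfrak U,W^{l,2}(\TT)))$ almost surely. The progressive measurability of $\mathbf G$ with respect to $(\FF_t)$ is then immediate, after modification on a null set of $\Omega\times(0,T)$, because $\mathbf G$ is an a.s.\ limit of the $(\FF^n_t)$-progressively measurable $\mathbf G_n$ and, by the very definition of $(\FF_t)$, each $\mathbf G e_k$ and each $W_k$ is $(\FF_t)$-adapted.

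Next I would localise to gain uniform second moments. For $R>0$ set $\tau^n_R:=\inf\{t\in[0,T]:\int_0^t\|\mathbf G_n(s)\|_{L_2(\mathfrak U,W^{l,2}(\TT))}^2\,\mathrm ds\ge R\}$ (with $\inf\emptyset:=T$) and define $\tau_R$ analogously from $\mathbf G$. Since the real random variables $\int_0^T\|\mathbf G_n\|_{L_2}^2\,\mathrm ds$ converge in probability, they are tight, whence $\sup_n\mathbb P(\tau^n_R<T)\to0$ as $R\to\infty$; it is therefore enough to prove, for a.e.\ level $R$, that $\int_0^{\cdot\wedge\tau^n_R}\mathbf G_n\,\mathrm dW_n\to\int_0^{\cdot\wedge\tau_R}\mathbf G\,\mathrm dW$ in probability. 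Replacing $\mathbf G_n$ by $\mathbf G_n\mathbf 1_{[0,\tau^n_R]}$ and $\mathbf G$ by $\mathbf G\mathbf 1_{[0,\tau_R]}$ (here one uses $\tau^n_R\to\tau_R$ a.s., valid for a.e.\ $R$), I then have at my disposal the uniform bound $\sup_n\mathbb E\int_0^T\|\mathbf G_n\|_{L_2}^2\,\mathrm ds\le R$, and, combining the a.s.\ convergence with this bound and dominated convergence, also $\mathbf G_n\to\mathbf G$ in $L^2(\Omega\times[0,T],L_2(\mathfrak U,W^{l,2}(\TT)))$.

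The core is a double approximation. For a uniform partition $0=t_0<\dots<t_m=T$ of mesh $\delta$ I would introduce the elementary processes $\mathbf G^\delta_n$ and $\mathbf G^\delta$ whose value on $(t_j,t_{j+1}]$ is the average of $\mathbf G_n$, resp.\ $\mathbf G$, over the preceding interval $(t_{j-1},t_j)$ (and $0$ on $(0,t_1]$); thus $\mathbf G^\delta_n$ is $(\FF^n_t)$-adapted, $\mathbf G^\delta$ is $(\FF_t)$-adapted, the map $f\mapsto f^\delta$ is a contraction on $L^2([0,T])$, and $f^\delta\to f$ in $L^2([0,T])$ as $\delta\to0$. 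Writing
\[
\int_0^\cdot\mathbf G_n\,\mathrm dW_n-\int_0^\cdot\mathbf G\,\mathrm dW=\int_0^\cdot(\mathbf G_n-\mathbf G^\delta_n)\,\mathrm dW_n+\Bigl(\int_0^\cdot\mathbf G^\delta_n\,\mathrm dW_n-\int_0^\cdot\mathbf G^\delta\,\mathrm dW\Bigr)+\int_0^\cdot(\mathbf G^\delta-\mathbf G)\,\mathrm dW,
\]
I would bound the first and third terms, uniformly in $n$, by the It\^o isometry (with respect to $W_n$, resp.\ $W$) and Doob's inequality: their second moments are dominated by constants times $\mathbb E\int_0^T\|\mathbf G_n-\mathbf G^\delta_n\|_{L_2}^2$, resp.\ $\mathbb E\int_0^T\|\mathbf G-\mathbf G^\delta\|_{L_2}^2$, the former being $\le C\,\mathbb E\int_0^T\|\mathbf G_n-\mathbf G\|_{L_2}^2+C\,\mathbb E\int_0^T\|\mathbf G-\mathbf G^\delta\|_{L_2}^2$ by the contraction property, hence small when first $n\to\infty$ and then $\delta\to0$. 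For fixed $\delta$ the middle term is a finite sum of products $\sum_j(\mathbf G^\delta_n(t_j^+)\,\Delta_jW_n-\mathbf G^\delta(t_j^+)\,\Delta_jW)$; since $\mathbf G^\delta_n(t_j^+)\to\mathbf G^\delta(t_j^+)$ in $L^2(\Omega)$ and $W_n\to W$ in $C([0,T],\mathfrak U_0)$, each such product converges in probability, provided one checks that the series over the Wiener modes $e_k$ defining $\mathbf G^\delta_n(t_j^+)\,\Delta_jW_n$ has a tail negligible uniformly in $n$ in $L^2(\Omega,W^{l,2}(\TT))$ — which follows because the uniform bound $\sup_n\mathbb E\int_0^T\|\mathbf G_n\|_{L_2(\mathfrak U,W^{l,2})}^2\le R$, together with $\mathbf G_n\to\mathbf G$ in $L^2(\Omega\times[0,T],L_2)$, forces uniformly decaying mode tails. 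Sending $n\to\infty$, then $\delta\to0$, then $R\to\infty$ finishes the proof.

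I expect the main obstacle to be precisely this passage through three limiting parameters combined with two structural nuisances. The limit integrand $\mathbf G$ is a priori adapted only to the limit filtration $(\FF_t)$ and to no $(\FF^n_t)$, so $\mathbf G_n$ and $\mathbf G$ cannot be compared directly but only through their structurally identical elementary-in-time approximations, against which the stochastic integrals can be written out explicitly and matched term by term. Moreover, the Wiener processes $W_n$ converge merely in the auxiliary space $\mathfrak U_0$, so the identification of the limit of the elementary stochastic integrals has to be carried out mode by mode, the Hilbert--Schmidt embedding $\mathfrak U\hookrightarrow\mathfrak U_0$ and the uniform $L^2$ estimate on $\mathbf G_n$ being exactly what renders the remainder over the modes $e_k$ uniformly small.
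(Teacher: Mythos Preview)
The paper does not supply its own proof of this lemma: it is stated as a quotation of \cite[Lemma~2.6.6]{BrFeHo}, itself a consequence of \cite[Lemma~2.1]{DGHT11}, and is used as a black box throughout the compactness arguments. There is therefore nothing in the paper to compare your attempt against.

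That said, your sketch follows the standard architecture of the Debussche--Glatt-Holtz--Temam argument (subsequence reduction to a.s.\ convergence, localisation by stopping times to obtain uniform $L^2$ bounds, passage through elementary-in-time approximants, and mode-by-mode identification of the limit using the Hilbert--Schmidt structure), which is precisely the route taken in the cited references. Your identification of the two delicate points---that $\mathbf G$ is adapted only to the limit filtration, forcing the comparison to go through the elementary approximants rather than directly, and that convergence of $W_n$ is only in $\mathfrak U_0$, requiring uniform control of the mode tails---is accurate and matches what those proofs actually have to handle.
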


The following {\em dissipative martingale solution} is a stochastic equivalent of a usual notion of weak solution supplemented with a certain energy balance condition. The concept was introduced in \cite{BFH17}.

\begin{Definition} \label{def.m.sol}
Let $\Lambda = \Lambda(\varrho,\vq)$ be a Borel probability measure on $L^1(\TT)\times L^1(\TT)$ such that
$$
\Lambda\{\varrho\geq 0\} = 1,\ \int_{L^1\times L^1}\left|\int_{\TT} \frac{|\vq|^2}\varrho + P(\varrho)\ {\rm d}x\right|^r\ {\rm d}\Lambda(\varrho,\vq) <\infty
$$
where $r\geq 1$ will be determined below.

The quantity $((\Omega,\FF, (\FF_t)_{t\geq0}, \PP), \varrho,\vu, W)$ is called a dissipative martingale solution to \eqref{NS.stochastic} with the initial law $\Lambda$ if
\begin{enumerate}
\item $(\Omega, \FF, (\FF_t)_{t\geq0},\PP)$ is a stochastic basis with a complete right-continuous filtration,
\item $W = (W_k)_{k\in\mathbb{N}}$ is a cylindrical $(\FF_t)$-Wiener process,
\item the density $\varrho$ and the velocity $\vu$ are random distributions adapted to $(\FF_t)_{t\geq0}$, $\varrho\geq0$ $\PP$-a.s.,
\item there exists an $\FF_0$-measurable random variable $[\varrho_0,\vu_0]$ such that $\Lambda = \mathcal L[\varrho_0,\varrho_0\vu_0]$,
\item the equation of continuity
\begin{equation*}
\int_0^T\int_\TT \varrho \varphi \pat \Phi \ {\rm d}x{\rm d}t + \int_{\TT} \varrho_0\varphi\ {\rm d}x \Phi(0) + \int_0^T\int_\TT \varrho \vu \nabla \varphi \Phi\ {\rm d}x{\rm d}t = 0
\end{equation*}
holds for all $\Phi\in C^\infty_c([0,T))$ and all $\varphi \in C^\infty(\TT)$ $\PP$-a.s.,
\item the momentum equation 
\begin{multline*}
\int_0^T\int_\TT \varrho \vu \cdot \varphi \pat \Phi\ {\rm d}x{\rm d}t  + \int_\TT \varrho_0\vu_0 \varphi \Phi(0)\ {\rm d}x \\
+ \int_0^T \int_\TT \varrho \vu\otimes\vu:\nabla \varphi + p(\varrho)\diver \varphi \Phi\ {\rm d}x{\rm d}t\\
 - \int_0^T\int_\TT S(D\vu): D\varphi \Phi \ {\rm d}x{\rm d}t = \\
\int_0^T\int_\TT \varrho \nabla K*\varrho  \varphi \Phi\ {\rm d}x{\rm d}t - \int_0^T\int_\TT\varrho(t,x) \varphi(x) \int_\TT \varrho(t,y) \psi(x-y) (\vu(t,y) - \vu(t,x)) \ {\rm d}x{\rm d}y{\rm d}t\\
+ \sum_{k=1}^\infty\int_0^T\int_\TT \vG_k(\varrho,\varrho\vu)\cdot\varphi \Phi\ {\rm d}x{\rm d}W_k
\end{multline*}
holds for all $\Phi\in C_c^\infty([0,T))$ and all $\varphi \in C^\infty(\TT)$ $\PP$-a.s.,
\item the energy inequality
\begin{multline} \label{eq:ene.ine}
\int_\TT \left(\frac 12 \varrho(T,x) |\vu(T,x)|^2 + P(\varrho(T,x)) + \frac12 \int_\TT K(x-y)\varrho(T,y)\varrho(T,x)\ {\rm d}y \right)\Phi(T)\ {\rm d}x\\
 - \int_\TT\left(\frac 12 \varrho_0(x) |\vu_0(x)|^2 + P(\varrho_0(x)) + \frac12 \int_\TT K(x-y)\varrho_0(y)\varrho_0(x)\ {\rm d}y \right)\Phi(0)\ {\rm d}x\\
 + \int_0^T\int_\TT S(D\vu):D\vu \Phi(t)\ {\rm d}x{\rm d}t\\
 + \frac12\int_0^T\int_\TT\int_\TT \varrho(t,x)\varrho(t,y)\left(\vu(t,y)-\vu(t,x)\right)^2 \psi(x-y)\Phi(t)\ {\rm d}x{\rm d}y{\rm d}t\\
\leq \frac 12\int_0^T \int_\TT \sum_{k=1}^\infty \varrho^{-1}|\vG_k(\varrho,\varrho\vu)|^2 \Phi(t)\ {\rm d}x{\rm d}t + \sum_{k=1}^\infty \int_0^T\int_\TT \vG_k(\varrho,\varrho\vu)\vu\Phi\ {\rm d}x{\rm d}W_k
\end{multline}
holds for all $\Phi\in C^\infty([0,T])$, $\Phi\geq 0$.
\end{enumerate}
\end{Definition}

Note that \eqref{eq:ene.ine} may be deduced from \eqref{NS.stochastic} and the Ito's formula under additional assumption that all quantities are sufficiently regular. Indeed,
assume $\varrho\geq \underline\varrho >0$. We deduce from \eqref{NS.stochastic}$_1$ that
\begin{equation*}
\pat \frac 1\varrho  = \frac{\diver(\varrho \vu)}{\varrho^2}.
\end{equation*}
We use Ito's formula (Theorem \ref{thm:ito}) for $\frac 1\varrho (\varrho\vu)(\varrho \vu)\Phi$ where $\Phi\in C^\infty([0,T]),\ \Phi\geq 0$. As a matter of fact \eqref{eq:ene.ine} holds with the equality sign once the solution is smooth enough.

\subsection{Main result}

Our main result deals with the existence of the aforementioned notion of solution:

\begin{Theorem}
Let $\Lambda$ be a Borel probability measure defined on the space $L^1(\TT)\times L^1(\TT)$ such that 
$$
\Lambda\{\varrho\geq 0\} = 1,\quad \Lambda\left\{0<\underline\varrho \leq \int_\TT \varrho\ {\rm d}x\leq \overline \varrho <\infty\right\} = 1
$$
for some $\underline\varrho$ and $\overline \varrho$ and 
$$
\int_{L^1\times L^1} \left|\int_\TT \left[\frac12\frac{|{\bf q}|^2}{\varrho} + P(\varrho)\right]\ {\rm d}x\right|^r\ {\rm d}\Lambda \leq c<\infty
$$
for some $r\geq 4$. Let \eqref{p.law}, \eqref{G.law.1} and \eqref{G.law.2} be assumed.

Then there is a dissipative martingale solution to \eqref{NS.stochastic}
\end{Theorem}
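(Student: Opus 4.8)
The strategy is the standard three-layer approximation scheme for compressible fluids combined with the stochastic compactness method (Skorokhod representation), adapted to accommodate the non-local terms. The plan is to first regularize the system by adding artificial viscosity $\ep \Delta \varrho$ in the continuity equation (to gain density regularity) and an artificial pressure term $\delta \varrho^\beta$ with $\beta$ large (to control the density from above), and to work on the Galerkin level in the velocity. On this fully approximate level one solves the momentum equation as a stochastic ODE system for the Galerkin coefficients via a fixed-point argument (Banach fixed point for the continuity equation with given $\vu$, then a stopping-time localization and the Itô theory for the truncated momentum system), obtaining local-in-time solutions that are then extended globally by exploiting the energy estimate. The non-local operators $\varrho \nabla K * \varrho$ and $\varrho(\psi * (\varrho \vu) - (\psi*\varrho)\vu)$ are, for fixed smooth $\varrho, \vu$, bounded and Lipschitz by the assumptions $K \in C^2(\TT)$, $\psi \in C^1(\TT)$, so they do not obstruct the fixed-point step; one only needs to check that they are compatible with the energy balance, which is exactly why the term $\tfrac12 \int_\TT K*\varrho\,\varrho\,\dx$ appears in \eqref{eq:ene.ine} and why the consensus term, being dissipative (as $\psi \geq 0$), lands on the good side of the inequality.

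The core of the argument is the passage to the limit. I would derive uniform (in the approximation parameters, in expectation to power $r$) bounds from the energy inequality: $\varrho \in L^\infty_t L^\gamma_x$ (plus $L^\infty_t L^\beta_x$ at the $\delta$-level), $\sqrt\varrho \vu \in L^\infty_t L^2_x$, $\vu \in L^2_t W^{1,2}_x$ via the Korn/viscosity coercivity $S(D\vu):D\vu \gtrsim |D\vu|^2$, and $\ep$-dependent bounds $\sqrt\ep \nabla \varrho \in L^2_{t,x}$. From these one gets tightness of the laws of $(\varrho, \varrho\vu, \vu, W)$ — and crucially of the energy process, which must be included to get the energy inequality in the limit — in suitable path spaces (weak-$L^\gamma$ topologies in space, $C_t$ with weak spatial topology, $C_t \mathfrak{U}_0$ for $W$). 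Apply the Jakubowski–Skorokhod theorem to obtain a new probability space with a.s. convergent copies, reconstruct the filtration via Lemma \ref{L266}, and pass to the limit in each term. The stochastic integral is identified using Lemma \ref{L266}; the non-local terms pass to the limit easily because $\nabla K * \varrho_n \to \nabla K * \varrho$ strongly (convolution with a $C^1$ kernel against a weakly convergent $L^1$ sequence converges in $C(\TT)$), and similarly for $\psi$, so $\varrho_n \nabla K * \varrho_n \to \varrho \nabla K * \varrho$ can be handled once strong convergence of $\varrho_n$ in $L^1$ is available.

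The main obstacle — and the genuinely hard part — is the strong convergence of the density, needed to pass to the limit in the pressure $p(\varrho_n)$ and in the quadratic-in-$\varrho$ non-local products. This requires the Lions–Feireisl machinery: the effective viscous flux identity $\overline{p(\varrho)\varrho} - \overline{p(\varrho)}\,\varrho = (\lambda + 2\mu)(\overline{\varrho\,\diver\vu} - \varrho\,\diver\vu)$ obtained by testing with $\nabla \Delta^{-1}$ of the density (or its regularization), then propagation of the renormalized continuity equation and an oscillation-defect-measure / $\varrho\log\varrho$ argument to conclude $\varrho_n \to \varrho$ strongly. In the stochastic setting this must be carried out pathwise on the Skorokhod space, with care that the commutator and compactness arguments (div-curl type lemmas) are applied $\omega$-by-$\omega$ using only the a.s.\ convergences, and that the stochastic integral contribution in the flux identity is controlled — this is where one needs the moment exponent $r \geq 4$ (to absorb products of weakly convergent quantities with the martingale part and to close the energy estimates after applying Itô to $\tfrac12\varrho^{-1}|\varrho\vu|^2$, which costs extra integrability because $\varrho^{-1}$ is only available after the $\ep$-regularization gives $\varrho > 0$). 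One removes the parameters in the order: first $m \to \infty$ (Galerkin), then $\ep \to 0$ (vanishing artificial viscosity in continuity), then $\delta \to 0$ (vanishing artificial pressure), repeating the density-compactness analysis — with the $\gamma > 3/2$ assumption ensuring enough integrability of $p(\varrho)$ for the flux argument to close at the final $\delta \to 0$ step.
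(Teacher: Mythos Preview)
Your proposal is correct and follows essentially the same route as the paper: the three-layer approximation (Galerkin, artificial viscosity $\ep\Delta\varrho$, artificial pressure $\delta\varrho^\Gamma$), energy bounds, Jakubowski--Skorokhod compactness, identification of the stochastic integral via Lemma~\ref{L266}, and the Lions--Feireisl effective-viscous-flux / renormalization / $\varrho\log\varrho$ argument for strong density convergence, with the limits taken in the order $m\to\infty$, $\ep\to0$, $\delta\to0$. Your treatment of the non-local terms (strong convergence of $\nabla K*\varrho$ and $\psi*\varrho$ via smoothing, dissipativity of the $\psi$-term in the energy) matches the paper's.

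The only noteworthy deviation is at the very bottom of the scheme: you propose a Banach fixed-point / stopping-time construction for the truncated Galerkin system, whereas the paper uses an explicit time-discretization (piecewise-constant velocity on intervals of length $h$, parabolic regularity for $\varrho$ on each step), then passes $h\to0$ via Jakubowski--Skorokhod, and finally invokes pathwise uniqueness plus the Gy\"ongy--Krylov theorem to pull the solution back to the original probability space. Your fixed-point route is arguably more direct and avoids this extra Skorokhod/Gy\"ongy--Krylov layer; the paper's discretization is more hands-on but requires the additional uniqueness lemma. Both are standard and lead to the same Galerkin-level solution.
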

The rest of this paper is devoted to the proof of this theorem.

The strategy is as follows. First, we introduce an approximate system. There are basically two types of approximation. First, we add the artificial viscosity term $\varepsilon \Delta \varrho$ into the continuity equation -- it is important to add also $\varepsilon\Delta(\varrho \vu)$ to the momentum equation to keep the energy inequality valid. Secondly, we consider a pressure $p_\delta(\varrho) = p(\varrho) + \delta(\varrho^2 + \varrho^6)$. Both these approximations increase the regularity of the density $\varrho$ which allows to use the method of the Galerkin approximations. 

\section{Approximate system}
\label{approx.sys}
We consider the following system
\begin{equation}
\begin{split}\label{NS.approx1}
{\rm d}\varrho + \diver(\varrho\vu)\ {\rm d}t =&\ \varepsilon \Delta\varrho \ {\rm d}t\\
{\rm d}\varrho \vu +\diver (\varrho \vu \otimes \vu)\ {\rm d}t + \nabla p_\delta(\varrho)\ {\rm d}t - \diver S(D\vu)\ {\rm d}t = &\ \varepsilon \Delta(\varrho u)\ {\rm d}t  -\varrho \int_{\TT}\nabla_x K(x-y)\varrho (y)\ {\rm d} y \ {\rm d}t\\
&
+ \varrho \int_{\TT}\psi(x-y)\varrho(y)(\vu(y) - \vu(x))\ {\rm d}y\ {\rm d}t\\
& +\mathbb G_{\varepsilon}(\varrho,\varrho\vu)\ {\rm d}W \mbox{ in }(0,T)\times \TT.
\end{split}
\end{equation}
where $p_\delta(\varrho) = p(\varrho)  +\delta (\varrho^\Gamma + \varrho^2)$ (here $\Gamma = \max\{\gamma,6\}$) and we define a potential $P_\delta$ in agreement with \eqref{pres.potential}, i.e.,
\begin{equation}\label{def.pdelta}
P_\delta(\varrho) = \varrho \int_1^\varrho \frac{p_\delta(s)}{s^2}\ {\rm d}s.
\end{equation}
$\mathbb G_\varepsilon$ is defined in the next subsection (see \eqref{eq:Gedef}). Moreover, we assume that
\begin{equation}\label{ini.rho.smooth}
\varrho(0,\cdot) = \varrho_0(\cdot),\quad \varrho_0\in C^{2+\nu}(\TT),\ 0<\underline\varrho \leq \varrho_0\leq \overline\varrho<\infty\quad \mbox{almost surely}
\end{equation}
for some $\underline\varrho\in \mathbb R$ and there is $M>0$ such that
$$
\int_\TT\varrho_0\ {\rm d}x = M
$$ 
for all $\omega\in \Omega$.

The additional term $\varepsilon\Delta \varrho$ changes the character of the continuity equation -- it becomes of parabolic type which yields sufficient regularity of solution. Next, the additional term $\delta(\varrho^\Gamma + \varrho^2)$ in the pressure $p_\delta$ increases the regularity of $\varrho$ which then allows to pass with $\varepsilon$ to $0$. In what follows, we first show the existence of solution to \eqref{NS.approx1} by time-discretization and the Galerkin approximations. Then we pass with $\varepsilon$ to $0$ and, subsequently, we tend with $\delta$ to zero.

\subsection{Time-discretization and finite-dimensional space}
Let $X_m$ denotes 
an $m$-dimensional subspace of $L^2(\TT)$ spanned by eigenvectors of the Laplace operator, i.e., $\vw_n\in L^2(\TT)$ is a sequence of functions fulfilling
\begin{equation*}
-\Delta \vw_n = \lambda_n \vw_n
\end{equation*}
for some $\{\lambda_n\}_{n=1}^\infty\subset \mathbb R$, such that
\begin{equation*}
L^2(\TT) = \overline{{\rm span}\{\vw_1, \vw_2, \ldots\}}^{L^2}.
\end{equation*}
We define
\begin{equation*}
X_m = {\rm span}\{\vw_1, \vw_2, \ldots, \vw_m\}. 	
\end{equation*}
Further, $\Pi_m:L^2(\mathbb T)\to X_m$ denotes an orthogonal projection.

Take a non-increasing smooth cut-off function fulfilling
\begin{equation*}
\chi(z) = \left\{\begin{array}{ll}
1 & \ \mbox{for }z\leq 0,\\
0 & \ \mbox{for }z\geq 1.
\end{array}\right.
\end{equation*}
We define for any $\vv\in X_m$ and $R\in \mathbb R$
\begin{equation*}
[\vv]_R = \chi(\|\vv\|_{X_m} - R)\vv.
\end{equation*}

For a given stochastic basis $(\Omega, \FF, (\FF_t)_{t\geq 0}, \mathbb P)$ with a complete right-continuous filtration and a cylindrical $(\FF_t)$-Wiener process $W$ we start with an approximation problem
\begin{equation}
\begin{split}\label{eq:ap.prob}
{\rm d}\varrho + \diver(\varrho[\vu]_R){\rm d}t & = \varepsilon \Delta\varrho {\rm d}t\\
{\rm d}\Pi_m(\varrho\vu) + \Pi_m(\diver(\varrho[\vu]_R\otimes\vu)) {\rm d}t + \Pi_m (\chi(\|\vu\|_{X_m} - R)\nabla p_\delta(\varrho)) {\rm d}t & = \Pi_m(\varepsilon \Delta(\varrho \vu)){\rm d}t\\
	+ \Pi_m(\diver S(D\vu)) {\rm d}t& - \Pi_m(\varrho(\nabla K*\varrho))\ {\rm d}t\\
	+ \Pi_m(\varrho (\psi*(\varrho\vu))& - \varrho\vu (\psi*\varrho))\ {\rm d}t\\
	+ \Pi_m(\varrho \Pi_m(\mathbb F_\varepsilon(\varrho,\vu)))\ {\rm d}W.
\end{split}
\end{equation}

Here, the right hand side $\mathbb F_\varepsilon$ is defined as
\begin{equation*}
\mathbb F_{\varepsilon}(\varrho,\vu) = \sum_{k=1}^\infty F_{k,\varepsilon}(\varrho,\vu) e_k
\end{equation*}
where
\begin{equation*}
F_{k,\varepsilon}(\varrho,\vu) = \chi\left(\frac \varepsilon\varrho - 1\right)\chi\left(|\vu| - \frac 1\varepsilon\right) \frac 1\varrho {\bf G}_k(\varrho,\varrho\vu).
\end{equation*}
Due to \eqref{G.law.2} there exists a sequence $\{f_{k,\varepsilon}\}_{k=1}^\infty\subset \mathbb R$ such that
\begin{equation}\label{eq:bound.F}
\|F_{k,\varepsilon}\|_{L^\infty} + \|\nabla_{\varrho,\vu} F_{k,\varepsilon}\|_{L^\infty} \leq f_{k,\varepsilon},\quad \sum_{k=1}^\infty f_{k,\varepsilon}^2 <\infty.
\end{equation} 
Moreover, there exists a sequence of numbers $\{f_k\}$ independent of $\varepsilon$ such that
\begin{equation}
\|F_{k,\varepsilon} (\cdot,\cdot,0)\|_{L^\infty(\TT\times\mathbb R)} + \|\nabla_\vu F_{k,\varepsilon}\|_{L^\infty(\TT\times\mathbb R\times\mathbb R^3)}\leq f_k,\ \sum_{k=1}^\infty f_k^2<\infty.\label{eq:bound.F1}
\end{equation}
This follows directly from \eqref{G.law.2} by a chain rule.
We also define $\mathbb G_\varepsilon$ by a formula
\begin{equation}\label{eq:Gedef}
\mathbb G_\varepsilon(\varrho,\vu) = \sum_{k=1}^\infty G_{k,\varepsilon}(\varrho,\varrho\vu)e_k
\end{equation}
where 
\begin{equation*}
G_{k,\varepsilon}(\varrho,\varrho\vu) = \varrho F_{k,\varepsilon}\left(\varrho,\frac{\varrho\vu}{\varrho}\right)
\end{equation*}

Let us note that since $\varrho,\ \vu$ are $(\FF_t)$-adapted by definition and have continuous trajectories $\PP$-a.s., they are also $(\FF_t)$-progressively measurable (it follows from the proof of \cite[Proposition 1.13]{KS91}). Utilizing \eqref{eq:bound.F} and properties of the projection $\Pi_m$, the same holds true for the composition $\varrho \Pi_m(\mathbb F_\varepsilon(\varrho,\vu))$ and moreover, it is a mapping with a range in $L_2(\mathfrak U, W^{-b,2}(\TT))$ for $b>\frac32$ and the stochastic integral in \eqref{eq:ap.prob} is well-defined (for details see \cite[Remark 4.1.2]{BrFeHo}).

We take a time step $h>0$ and set 
\begin{equation*}
\varrho(t) = \varrho_0,\qquad \vu(t) = \frac{(\varrho \vu)_0}{\varrho_0} \quad \mbox{ for $t\leq0$}
\end{equation*}
where we assume that $\mathbb E[\|\vu(0)\|_{X_m}^r]$ is bounded independently of $m$.
We define a solution $\varrho$, $\vu$ as follows: in each time step $t\in [nh,nh+h)$ let $\varrho(t)$ be a solution to
\begin{equation}
\begin{split}\label{eq:ap.prob2}
\pat\varrho +  \diver (\varrho [\vu(nh)]_R) & = \varepsilon \Delta\varrho\\
\varrho(nh) & = \lim_{t\to nh_-} \varrho(t).
\end{split}
\end{equation}
Note that this is a usual parabolic problem -- for  more we refer for example to \cite{DeHiPr}. Here we also infer the mass conservation law, i.e.
\begin{equation}\label{eq:mass.cons}
\int_\TT \varrho(t,\cdot)\ {\rm d}x = \int_\TT \varrho_0 (\cdot)\ {\rm d}x = M
\end{equation}
which holds for all $\omega\in \Omega$ and all $t\in (0,T)$. This follows by integrating \eqref{eq:ap.prob2} over $(0,t)\times\TT$. 

Further, we use a nowadays usual division by $\varrho$ represented by a functional $\mathcal M^{-1}_\varrho$ defined as an inverse to a functional
\begin{equation*}
\mathcal M_\varrho:X_m\mapsto X_m,\ \int_{\mathbb T}\mathcal M_\varrho \vv\cdot \varphi\ {\rm d}x = \int_{\mathbb T} \varrho \vv \cdot \varphi \ {\rm d}x \ \forall \varphi \in X_m
\end{equation*}
(for more properties of $\mathcal M^{-1}$ see \cite[Section 2.2]{FeNoPe}) to define $\vu$ in $[nh, nh+h)$ as 
\begin{multline}\label{eq:ap.prob3}
\vu(t) = \mathcal M^{-1}_\varrho (\Pi_m(\varrho \vu(nh))) - \mathcal M^{-1}_\varrho\int_{nh}^t \Pi_m (\diver(\varrho(s)[\vu(nh)]_R\otimes \vu(nh)))\ {\rm d}s\\
 - \mathcal M^{-1}_{\varrho} \int_{nh}^t \Pi_m (\chi(\|\vu(nh)\|_{X_m} - R) \nabla p_{\delta}(\varrho(s)))\ {\rm d}s \\
+ \mathcal M^{-1}_{\varrho} \int_{nh}^t \Pi_m (\varepsilon\Delta(\varrho (s)\vu(nh)) + \diver S(D\vu(nh)))\ {\rm d}s\\
 + \mathcal M^{-1}_\varrho \int_{nh}^t \Pi_m (\varrho(\psi * (\varrho\vu(nh))) - \varrho \vu(nh) (\psi*\varrho) - \varrho (\nabla K*\varrho))\ {\rm d}s\\
 + \mathcal M^{-1}_\varrho \int_{nh}^t \Pi_m (\varrho(s) \Pi_m (\mathbb F_\varepsilon (\varrho(nh),\vu(nh))))\ {\rm d}W
\end{multline}

Due to the known results concerning the regularity of a solution to the heat equation, we obtain functions $\varrho,\ \vu$ satisfying
\begin{equation}\label{eq:h.con.rho}
\varrho \in C([0,T]; C^{2+\nu}(\mathbb T)),\quad \varrho>0,\qquad \vu \in C([0,T]; X_m)\qquad \mathbb P\mbox{-a.s.}
\end{equation}
for some $\nu>0$, where $\varrho,\ \vu$ are progressively $(\FF_t)$-measurable. 

Both $\varrho$ and $\vu$ have continuous trajectories $\mathbb P$-a.s. and $\varrho$ is $(\FF_t)$-adapted (thanks to \eqref{eq:ap.prob2} and initial conditions) and $\vu$ is $(\FF_t)$-adapted since the stochastic integral in \eqref{eq:ap.prob3} is well-defined (similar arguments as below \eqref{eq:ap.prob}) and as such is a $(\FF_t)$-martingale.

By a standard interpolation -- see e.g. \cite[Proposition 1.1.3]{lunardi} -- one may deduce that 
\begin{equation}\label{eq:rho.reg}
\varrho\in C^\nu([0,T];C^{2+\nu}(\mathbb T)) \qquad \mathbb P\mbox{-a.s.}
\end{equation}
for some $\nu>0$ which is possibly smaller than the one mentioned in \eqref{eq:h.con.rho}.

\subsection{Time smoothening}\label{S:2.2}
Let $\varrho_h$ and $\vu_h$  be the solution constructed in the previous section corresponding to a certain value of $h>0$. In order to pass with $h$ to $0$ we deduce estimates independent of $h$. 
Assume that 
\begin{equation*}
0<\underline\varrho \leq \varrho_0,\quad \|\varrho_0\|_{C^{2+\nu}(\mathbb T)}\leq \overline\varrho\qquad \mathbb P\mbox{-a.s.}
\end{equation*}
(see \eqref{ini.rho.smooth}).
In what follows, we use a notation $u(nh)$ for a piecewise constant function defined as $u(nh)(t) = u\left(\lfloor\frac tn\rfloor h\right)$ where $\lfloor\cdot\rfloor$ denotes the floor function. Although this notation might be confusing, its usage will be always clear.
Since
\begin{equation*}
\|[\vu(nh)]_{R}\|_{W^{2,\infty}(\mathbb T)}\leq c,
\end{equation*}
 we get, similarly to \eqref{eq:rho.reg}, that
\begin{equation}\label{eq:rhoh.bound}
{\rm esssup}_{t\in [0,T]}(\|\varrho_h(t)\|_{C^{2+\nu}(\mathbb T)} + \|\pat \varrho_h(t)\|_{C^\nu(\mathbb T)} + \|\varrho_h^{-1}(t)\|_{L^\infty(\mathbb T)})\leq c
\end{equation}
for a deterministic constant $c$ which is independent of $\varrho$, $\vu$ and $h$. By the same reasoning as above, 
\begin{equation*}
\varrho_h \in C^{\nu}([0,T],C^{2+\nu}(\mathbb T))
\end{equation*}
with norm independent of $h$.

To deduce the H\"older regularity of $\vu_h$ we take a test function $\varphi\in X_m$ in \eqref{eq:ap.prob3} and we integrate over $\mathbb T$ and a time interval $[\tau_1,\tau_2]\subset [0,T]$ to get
\begin{multline}\label{eq:uni.con}
\int_{\mathbb T} (\varrho_h \vu_h (\tau_2) - \varrho_h \vu_h(\tau_1))\varphi \ {\rm d}x = \int_{\tau_1}^{\tau_2} \int_{\mathbb T}(\varrho_h [\vu_h]_R\otimes \vu_h)\nabla \varphi \ {\rm d}x{\rm d}t\\
 - \int_{\tau_1}^{\tau_2}\int_{\mathbb T} \chi(\|\vu_h\|_{X_m}  - R) \nabla p_{\delta}(\varrho_h) \varphi\ {\rm d}x{\rm d}t
 + \int_{\tau_1}^{\tau_2}\int_{\mathbb T} \varepsilon \Delta(\varrho_h \vu_h)\varphi \ {\rm d}x{\rm d}t\\
 + \int_{\tau_1}^{\tau_2}\int_{\mathbb T}\diver S(D\vu_h) \varphi\ {\rm d}x{\rm d}t - \int_{\tau_1}^{\tau_2} \int_{\mathbb T} \varrho_h (\nabla K*\varrho_h)\varphi \ {\rm d}x{\rm d}t \\
+ \int_{\tau_1}^{\tau_2}\int_{\mathbb T} \left[\varrho_h(\psi*(\varrho_h \vu_h)) - \varrho_h \vu_h (\psi*\varrho_h)\right]\varphi\ {\rm d}x{\rm d}t
 + \int_{\tau_1}^{\tau_2} \int_{\mathbb T} \varrho_h \Pi_m (\mathbb F_\varepsilon (\varrho_h, \vu_h))\varphi \ {\rm d}x{\rm d}W.
\end{multline}
We take $\tau_1 = 0$ to get  
\begin{multline*}
\|\Pi_m(\varrho_h\vu_h)(\tau_2)\|_{X_m}^r\\
\leq c\left(\|\vu_h(0)\|_{X_m}^r + \int_{0}^{\tau_2} \sup_{0\leq s\leq t}\|\vu_h\|_{X_m}^r\ {\rm d}t + 1 + \left\|\int_{0}^{\tau_2} \Pi_m(\varrho_h \Pi_m(\mathbb F_\varepsilon(\varrho_h,\vu_h)))\ {\rm d}W\right\|^r_{X_m}\right)
\end{multline*}
where $c$ depends also on $R$. Since $\vu_h = \mathcal M^{-1}_{\varrho_h}(\Pi_m(\varrho_h \vu_h))$ and \eqref{eq:rhoh.bound} we deduce with the help of the Gronwall inequality that
\begin{equation}\label{eq:first.est}
\mathbb E\left[\sup_{\tau\in[0,T]} \|\Pi_m(\varrho_h \vu_h)(\tau)\|_{X_m}^r\right] + \mathbb E\left[\sup_{\tau\in [0,T]}\|\vu_h\|_{X_m}^r\right]
\leq c(1+\mathbb E\left[\|\vu_0\|^r_{X_m}\right])
\end{equation}
for every $r\geq 1$.

Further, \eqref{eq:uni.con} yields
\begin{multline*}
\|\Pi_m(\varrho_h \vu_h(\tau_2) - \varrho_h \vu_h(\tau_1))\|_{X_m}^r = \left(\sup_{\|\varphi\|_{X_m}\leq 1} \int_{\mathcal T}(\varrho_h \vu_h(\tau_2) - \varrho_h\vu_h(\tau_1))\varphi\ {\rm d}x\right)^r\\
\leq c\left(\int_{\tau_1}^{\tau_2} 1 \ {\rm d}t + \int_{\tau_1}^{\tau_2} \sup_{s\in[0,T]}\|\vu_h(s)\|_{X_m}\ {\rm d}t\right)^r + \left\|\int_{\tau_1}^{\tau_2} \Pi_m(\varrho_h \Pi_m(\mathbb F_\varepsilon(\varrho_h,\vu_h)))\ {\rm d}W\right\|^r_{X_m}.
\end{multline*}
The Burkholder-Davis-Gundy (for the separable Hilbert space variant see e.g. \cite{MR16}) inequality yields
\begin{multline}\label{Burkholder-davis-gundy}
\mathbb E\left[\left\|\int_{\tau_1}^{\tau_2} \Pi_m(\varrho_h \Pi_m(\mathbb F_\varepsilon(\varrho_h,\vu_h)))\ {\rm d}W\right\|^r_{X_m}\right]\\
\leq c\mathbb E \left[\left(\int_{\tau_1}^{\tau_2} \left\|\sum_{k=1}^{\infty}\Pi_m(\varrho_h \Pi_m(\mathbb F_{k,\varepsilon}(\varrho_h,\vu_h)))\right\|_{X_m}^2\ {\rm d}t\right)^{\frac r2}\right]\\
\leq c \mathbb E \left[\int_{\tau_1}^{\tau_2}\|\varrho_h (t)\|_{\Gamma}^2 \sum_{k=1}^\infty f_{k,\varepsilon}^2\ {\rm d}t\right]^{\frac r2}\leq c |\tau_2 - \tau_1|^{r/2}.
\end{multline}
due to the assumptions on $f_{k,\varepsilon}$. Thus, with help of \eqref{eq:first.est},
\begin{equation*}
\mathbb E\left[\|\Pi_m(\varrho_h \vu_h(\tau_2) - \varrho_h \vu_h(\tau_1))\|_{X_m}^r\right]\leq c|\tau_2 - \tau_1|^{r/2}\left((1+\mathbb E\left[\|\vu_0\|_{X_m}^r\right]\right)
\end{equation*}
for $r\geq 1$. Note that the constants $c$ in the aforementioned estimates are always independent of $h$ and $\varrho_h,\ \vu_h$. With help of the Kolmogorov continuity criterion (see e.g. \cite[Theorem 3.3]{DPZ92}) we deduce that there exists a modification of $\Pi_m(\varrho_h \vu_h)$ that has $\mathbb P$-a.s. $\beta$-H\"older continuous trajectories for all $\beta\in (0,(r-2)/2r)$. Since $\vu_h = \mathcal M^{-1}_{\varrho_h} (\Pi_m(\varrho_h \vu_h))$, we deduce that
\begin{equation*}
\mathbb E\left[\|\vu_h\|^r_{C^\beta((0,T),X_m)}\right]\leq c\left(1+ \mathbb E\left[\|\vu_0\|_{X_m}^r\right]\right)
\end{equation*}
with the constant $c$ independent of $h$ and the solution.

Now, we pass to a limit with $h$. 
Consider a path space for the basic state variables $[\varrho_h, \vu_h,W]$:
\begin{equation*}
\mathcal X=  \mathcal X_\varrho \times\mathcal X_\vu \times \mathcal X_W 
\end{equation*}
where
\begin{equation*}
\begin{split}
\mathcal X_\varrho & = C^\iota([0,T], C^{2+\iota})\\
\mathcal X_\vu & = C^\kappa([0,T],X_m)\\
\mathcal X_W & = C([0,T], \mathfrak{U}_0).
\end{split}
\end{equation*}
for some $0<\iota<\nu$ and $0<\kappa<\beta$.

The set of laws $\mathcal L(\varrho_h,\vu_h,W)$ is tight in $\mathcal X$. Indeed, Arzel\`a-Ascoli theorem and interpolation yield a compact embedding of $C^{2+\nu}(\mathbb T)$ to $C^{2+\iota}(\mathbb T)$ for $0<\iota<\nu$ and thus
$$
C^{\nu}([0,T], C^{2+\nu}(\mathbb T))\cap W^{1,\infty}([0,T], C^\nu(\mathbb T))\stackrel{c}{\hookrightarrow} C^\iota([0,T], C^{2+\iota}(\mathbb T)).
$$

Take 
\begin{multline*}B_c = \{\varrho  \in C^\nu([0,T],C^{2+\nu}(\mathbb T))\cap W^{1,\infty}([0,T],C^\nu(\mathbb T)),\\ \|\varrho\|_{C^\nu([0,T],C^{2+\nu}(\mathbb T))} + \|\varrho\|_{W^{1,\infty}([0,T],C^\nu(\mathbb T))} \leq c\}\end{multline*}
 where $c$ is the constant appearing in \eqref{eq:rhoh.bound}. Clearly, $\mathcal L(\varrho_h)(B_c) = 1$ and $B_c$ is compact in $\mathcal X_\varrho$ due to the aforementioned embedding. 

Further, $B_L = \{\vu\in C^\beta([0,T],X_m),\ \|\vu\|_{C^\beta([0,T],X_m)}\leq L\}$ is compact in $\mathcal X_\vu$ since $C^\beta([0,T],X_m)$ is compactly embedded into $C^\iota ([0,T],X_m)$ for $\iota<\beta$ due to the Arzel\`a-Ascoli theorem. By Chebychev's inequality,
\begin{multline}\label{eq:u.tight}
\mathcal L(\vu_h)(B_L) = 1-\mathcal L(\vu_h)(B_L^c) = 1-\mathbb P(\|\vu_h\|_{C^\beta([0,T],X_m)} >L)\\ \geq 1-\frac 1{L^r} \mathbb E(\|\vu_h\|_{C^\beta([0,T],X_m)}^r)
\geq 1-\frac c{L^r}
\end{multline}
and thus we get the claimed tightness.

Because $\mathcal{X}$ is not separable we cannot use the usual combination of Prokhorov and Skorokhod's theorems. Fortunately, there exists a sequence of continuous functions $g_n:\mathcal{X} \to(-1,1)$, $n\in\mathbb{N}$, that separates the points of $\mathcal{X}$.
Spaces enjoying this property are called sub-Polish (see \cite[Definition 2.1.3]{BrFeHo}), or quasi-Polish (see \cite[p. 4162]{BrOnSe16}).

The Jakubowski-Skorokhod theorem (see \cite[Theorem 2]{JAK97}) provides the similar outcome as the standard approach: There exist a complete probability space $(\tilde\Omega,\tilde \FF, \tilde{\mathbb P})$ and $\mathcal{X}$-valued Borel measurable random variables $(\tilde\varrho_h, \tilde \vu_h, \tilde W_h)$, $h\in (0,1)$ and $(\tilde \varrho, \tilde \vu, \tilde W)$ such that
\begin{itemize}
\item the law of $(\tilde \varrho_h, \tilde \vu_h, \tilde W_h)$ is given by $\mathcal L(\varrho_h, \vu_h, W)$,
\item the law of $(\tilde \varrho, \tilde \vu,  \tilde W)$ is a Radon measure on $\mathcal X$,
\item $(\tilde \varrho_h, \tilde \vu_h, \tilde W_h)$ converges $\tilde{\mathbb P}$-almost surely to $(\tilde \varrho, \tilde \vu, \tilde W)$ in the topology of $\mathcal X$. 
\end{itemize}

It is important to realize that $\tilde\varrho,\ \tilde\vu$ are classical stochastic processes unlike the ensuing parts of the construction where we will be forced to work with more general random distributions.

Lets denote the canonical filtration (for a definition see e.g. \cite[Remark 2.1.15]{BrFeHo}) of $(\tilde\varrho, \tilde\vu,\tilde W)$, where $\tilde W = (\tilde{W}_k)$, as
\[
\tilde\FF_t:= \sigma\left( \sigma_t[\tilde\varrho]\cup\sigma_t[\tilde\vu]\cup\bigcup_{k=1}^{\infty} \sigma_t[\tilde W_k]\right), \quad t\in[0,T].
\]

The trajectories of the processes $\tilde\varrho,\ \tilde\vu$ are $\tilde{\mathbb P}$-a.s. continuous and hence they are progressively measurable with respect to their canonical filtrations by \cite[Proposition 1.13]{KS91}) and thus also with respect to $(\tilde\FF_t)_{t\geq0}$.

The stochastic process $\tilde W$ is a cylindrical Wiener process with respect to its canonical filtration because its law is given by $\mathcal{L}(W)$ and $W$ is a cylindrical Wiener process. It can be shown (see \cite[p. 115]{BrFeHo}) that it is also a $(\tilde{\FF}_t)$-cylindrical process.

\begin{Lemma} The above constructed stochastic processes $\tilde\varrho,\ \tilde\vu$ together with a cylindrical Wiener process $\tilde W$ satisfy \eqref{eq:ap.prob} in the sense of distributions.
\end{Lemma}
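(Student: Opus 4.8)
The plan is to carry out the standard stochastic-compactness passage to the limit $h\to0$ in the time-discretized scheme, identifying the limiting stochastic integral by Lemma~\ref{L266}. I would first record that, by construction (cf.\ \eqref{eq:ap.prob2}--\eqref{eq:uni.con}), the original triple $(\varrho_h,\vu_h,W)$ satisfies the frozen-coefficient counterpart of \eqref{eq:ap.prob} in which the nonlinear coefficients are evaluated at the grid points $nh$; in particular, after testing the momentum balance with $\varphi\in X_m$, the $X_m$-valued process
\[
M_h(t):=\Pi_m(\varrho_h\vu_h)(t)-\Pi_m(\varrho_h\vu_h)(0)-\int_0^t\big(\text{frozen drift}\big)\,{\rm d}s
\]
coincides with $\int_0^t\Pi_m\big(\varrho_h\,\Pi_m(\mathbb F_\varepsilon(\varrho_h(nh),\vu_h(nh)))\big)\,{\rm d}W$, so that $M_h$, $M_h\otimes M_h-\int_0^{\cdot}(\ldots)\,{\rm d}s$ and $M_hW_k-\int_0^{\cdot}(\ldots)\,{\rm d}s$ are square-integrable $(\FF_t)$-martingales. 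Each of these identities is encoded by the vanishing of expectations of bounded continuous functionals of the trajectories restricted to $[0,s]$, hence is preserved under the change of probability space furnished by the Jakubowski--Skorokhod theorem; I would then invoke the usual martingale-representation argument (see \cite{BrFeHo}) to deduce that the analogously defined $\tilde M_h$ equals $\int_0^{\cdot}\tilde G_h\,{\rm d}\tilde W_h$ with $\tilde G_h:=\Pi_m(\tilde\varrho_h\,\Pi_m(\mathbb F_\varepsilon(\tilde\varrho_h(nh),\tilde\vu_h(nh))))$. Thus $(\tilde\varrho_h,\tilde\vu_h,\tilde W_h)$ solves the discretized scheme on $(\tilde\Omega,\tilde\FF,(\tilde\FF^h_t),\tilde\PP)$, and the bounds \eqref{eq:rhoh.bound} and \eqref{eq:first.est}, the H\"older-in-time bound on $\vu_h$, and the positivity together with the uniform lower bound on $\varrho_h$, transfer verbatim to the tilde variables by equality of laws.

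Next I would pass $h\to0$ using $(\tilde\varrho_h,\tilde\vu_h,\tilde W_h)\to(\tilde\varrho,\tilde\vu,\tilde W)$ $\tilde\PP$-a.s.\ in $\mathcal X$. Since $\|\vu_h(t)-\vu_h(\lfloor t/h\rfloor h)\|_{X_m}\le c\,h^\beta$, and likewise for $\varrho_h$, the frozen coefficients converge to $\tilde\vu$ and $\tilde\varrho$ uniformly in $t\in[0,T]$; because $X_m$ is finite-dimensional and $\tilde\varrho_h\to\tilde\varrho$ in $C^\iota([0,T],C^{2+\iota}(\TT))$, all the drift terms of the momentum balance -- including the cut-offs $\chi(\cdot)$, the convolutions $\nabla K*\tilde\varrho_h$ and $\psi*(\tilde\varrho_h\tilde\vu_h)$, and the pressure term $\nabla p_\delta(\tilde\varrho_h)$, all continuous in their arguments -- converge $\tilde\PP$-a.s.\ once tested with $\varphi\in X_m$ and $\Phi\in C^\infty_c([0,T))$, and Vitali's theorem (with the $r$-th moments, $r\ge4$, providing uniform integrability) upgrades this to convergence in the sense of distributions. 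For the stochastic term I would apply Lemma~\ref{L266}: by \eqref{eq:bound.F}, continuity of $\mathbb F_\varepsilon$ and of $\Pi_m$, and the uniform convergence of the frozen coefficients, $\tilde G_h\to\Pi_m(\tilde\varrho\,\Pi_m(\mathbb F_\varepsilon(\tilde\varrho,\tilde\vu)))$ in $L_2(\mathfrak U,W^{-b,2}(\TT))$ in probability, whence $\int_0^{\cdot}\tilde G_h\,{\rm d}\tilde W_h\to\int_0^{\cdot}\Pi_m(\tilde\varrho\,\Pi_m(\mathbb F_\varepsilon(\tilde\varrho,\tilde\vu)))\,{\rm d}\tilde W$ in probability; the same lemma also gives that the limiting integrand is $(\tilde\FF_t)$-progressively measurable and that $\tilde W$ is a $(\tilde\FF_t)$-cylindrical Wiener process.

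The density equation is easier: $\tilde\varrho_h$ solves the linear parabolic problem \eqref{eq:ap.prob2} with transport field $[\tilde\vu_h(\lfloor\cdot/h\rfloor h)]_R$, and since this field converges uniformly while $\tilde\varrho_h\to\tilde\varrho$ strongly in $C^\iota([0,T],C^{2+\iota}(\TT))$, one passes to the limit directly and recovers \eqref{eq:ap.prob}$_1$ (indeed classically). Combining this with the previous step shows that $(\tilde\varrho,\tilde\vu,\tilde W)$ satisfies \eqref{eq:ap.prob} on $(\tilde\Omega,\tilde\FF,(\tilde\FF_t),\tilde\PP)$ in the sense of distributions.

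The delicate points are not the nonlinearities -- in this finite-dimensional, spatially smooth regime there is no compensated-compactness or effective-viscous-flux difficulty, and $\chi$, $K*\cdot$, $\psi*\cdot$, $p_\delta$ are continuous, so they pass to the limit along $\tilde\PP$-a.s.\ convergent sequences -- but rather (i) transferring validity of the discretized scheme from the original triple to the tilde triples via equality of laws, which is routed through the martingale characterization of the It\^o integral, and (ii) controlling the freezing error and securing uniform integrability uniformly in $h$, so that the limit survives under expectations. The H\"older-in-time estimates and the $r\ge4$ moment bounds established above are precisely what make (ii) work, and Lemma~\ref{L266} is what disposes of the stochastic integral in both (i) and (ii).
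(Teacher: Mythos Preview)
Your proposal is correct and follows essentially the same route as the paper: transfer the discretized scheme to the tilde variables by equality of laws, control the freezing error via the H\"older-in-time estimates, pass to the limit in the drift using the $\tilde{\mathbb P}$-a.s.\ convergence in $\mathcal X$, and identify the stochastic integral through Lemma~\ref{L266}. The only cosmetic difference is that where you spell out the martingale-characterization argument to justify that $(\tilde\varrho_h,\tilde\vu_h,\tilde W_h)$ still solves the discretized scheme, the paper simply invokes \cite[Theorem~2.9.1]{BrFeHo} as a black box for this step.
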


\begin{proof}
First concern \eqref{eq:ap.prob}$_1$. By \cite[Theorem 2.9.1]{BrFeHo} the equation \eqref{eq:ap.prob2}$_1$ is satisfied also for $\tilde \varrho_h$ and $\tilde \vu_h$. The convergence in the topology of $\mathcal X$ is sufficient to deduce that $\tilde\varrho_1$ and $\tilde \vu_1$ is a weak solution to \eqref{eq:ap.prob}. In particular, $\tilde\varrho_h, \tilde \vu_h$ satisfy
\begin{equation*}
\int_0^T\int_{\mathbb T} \tilde \varrho_h \partial_t \varphi + \tilde \varrho_h [\tilde \vu_h]_R \nabla \varphi + \varepsilon \nabla \tilde \varrho_h \nabla \varphi \ {\rm d}x{\rm d}t = \int_{\mathbb T} \tilde \varrho_h\varphi (T)\ {\rm d}x - \int_{\mathbb T} \tilde \varrho_h \varphi(0)\ {\rm d}x
\end{equation*}
for all $\varphi \in C^\infty([0,T]\times \mathbb T)$. The convergence in $\mathcal X$ is sufficient to claim 
\begin{equation*}
\int_0^T\int_{\mathbb T} \tilde \varrho \partial_t \varphi + \tilde \varrho [\tilde \vu]_R \nabla \varphi  + \varepsilon \nabla \tilde \varrho \nabla \varphi \ {\rm d}x{\rm d}t = \int_{\mathbb T} \tilde \varrho \varphi(T)\ {\rm d}x - \int_{\mathbb T}\tilde \varrho \varphi(0)\ {\rm d}x.
\end{equation*}
It is worthwhile to mention that \eqref{eq:mass.cons} remains valis also for $\tilde\varrho$. 

It remains to handle \eqref{eq:ap.prob}$_2$. The convergences  of $\tilde\varrho_h$ and $\tilde\vu_h$ yield 
\begin{equation*}
\begin{split}
\nabla K*\tilde \varrho_h &\rightrightarrows \nabla K*\tilde \varrho\\
\psi*(\tilde\varrho_h \tilde \vu(nh))&\rightrightarrows \psi*(\tilde \varrho \tilde \vu)\\
\psi*\tilde \varrho_h & \rightrightarrows \psi*\tilde \varrho
\end{split}
\end{equation*}
$\mathbb P$-almost surely. 

The convergences of $\varrho_h$ and $\vu_h$ together with the continuity of coefficients of $\mathbb F_\varepsilon$ yield
\begin{equation*}
\Pi_m(\tilde\varrho_h \Pi_m(F_{k,\varepsilon}(\tilde\varrho_h(nh),\vu_h(nh))))\rightarrow \Pi_m (\tilde\varrho(\Pi_m(F_{k,\varepsilon}(\tilde\varrho,\tilde \vu)))\ \mbox{in }L^q([0,T]\times \mathbb T)
\end{equation*}
and
\begin{multline*}
\mathbb E \left(\int_0^T\|\Pi_m[\tilde\varrho_h \Pi_m[\mathbb F_\varepsilon(\tilde\varrho_h(nh),\tilde\vu_h(nh))]]\|_{L_2(\mathfrak U, L^2(\TT))}^2\ {\rm d}t\right) \\
\leq \sum_{k=1}^\infty\mathbb E \left(\int_0^T\|\Pi_m[\tilde\varrho_h \Pi_m[\mathbb F_{k,\varepsilon}(\tilde\varrho_h(nh),\tilde\vu_h(nh))]]\|_{L_2(\mathfrak U, L^2(\TT))}^2\ {\rm d}t\right) \\
\leq c \|\tilde \varrho_h \|_{L^\infty([0,T]\times \mathbb T)}^2 \sum_{k=1}^\infty f_{k,\varepsilon}^2 \leq c
\end{multline*}
and thus
\begin{equation}\label{eq:h:st}
\Pi_m(\tilde\varrho_h \Pi_m(\mathbb F_\varepsilon(\tilde\varrho_h(nh),\vu_h(nh))))\rightarrow \Pi_m (\tilde\varrho(\Pi_m(\mathbb F_\varepsilon(\tilde\varrho,\tilde \vu)))\ \mbox{in }L^2([0,T],L_2(\mathfrak U,L^2(\mathbb T)))
\end{equation}
$\mathbb P$-almost surely. Consequently, $\tilde \varrho_h, \tilde \vu_h$ satisfy \eqref{eq:ap.prob3} and we apply $\mathcal M^{-1}_{\tilde \varrho_h}$ to get
\begin{multline*}
\Pi_m(\tilde\varrho_h \tilde\vu_h)(t) = \Pi_m (\tilde \varrho_h\tilde \vu_h)(0) \\
+\int_0^t \Pi_m (-\diver(\tilde \varrho_h[\tilde \vu_h(nh)]_R\otimes \vu(nh)) - (\chi(\|\vu(nh)\|_{X_m} - R) \nabla p_\delta (\varrho))     )(s)\ {\rm d}s\\
+\int_0^t \Pi_m (\varepsilon \Delta (\tilde \varrho_h \vu(nh)) + \diver S(D\vu(nh)))(s)\ {\rm d}s\\
+\int_0^t \Pi_m (\tilde \varrho_h (\psi*(\tilde \varrho_h \tilde \vu_h(nh))) - \tilde \varrho_h \tilde \vu_h(nh)(\psi*\tilde \varrho_h) - \tilde \varrho_h (\nabla K*\tilde \varrho_h)(s)\ {\rm d}s\\
 + \int_0^t \Pi_m (\tilde \varrho_h \Pi_m (\mathbb F_\varepsilon(\tilde\varrho_h(nh),\tilde\vu_h(nh)))) \ {\rm d}\tilde W_h
\end{multline*}
The H\"older continuity of $\tilde \vu_h$ yields
\begin{equation*}
\|\tilde \vu_h(nh)(t) - \tilde \vu_h(t)\|_{X_m} \leq ch^\beta \| \tilde \vu_h\|_{C((0,T),X_m)}
\end{equation*}
and
\begin{equation*}
\|\tilde \varrho_h(nh)(t)  - \tilde \varrho_h(t)\|_{C^{2+\nu}(\TT)} \leq ch^\nu \|\tilde \varrho_h\|_{C((0,T),C^{2+\nu}(\TT))}
\end{equation*}
and we deduce that
\begin{multline*}
\Pi_m(\tilde\varrho_h \tilde\vu_h)(t) = \Pi_m (\tilde \varrho_h\tilde \vu_h)(0) \\
+\int_0^t \Pi_m (-\diver(\tilde \varrho_h[\tilde \vu_h]_R\otimes \vu) - (\chi(\|\vu\|_{X_m} - R) \nabla p_\delta (\varrho))     )(s)\ {\rm d}s\\
+\int_0^t \Pi_m (\varepsilon \Delta (\tilde \varrho_h \vu) + \diver S(D\vu))(s)\ {\rm d}s\\
+\int_0^t \Pi_m (\tilde \varrho_h (\psi*(\tilde \varrho_h \tilde \vu_h)) - \tilde \varrho_h \tilde \vu_h(\psi*\tilde \varrho_h) - \tilde \varrho_h (\nabla K*\tilde \varrho_h)(s)\ {\rm d}s\\
 + \int_0^t \Pi_m (\tilde \varrho_h \Pi_m (\mathbb F_\varepsilon(\tilde\varrho_h,\tilde\vu_h))) \ {\rm d}\tilde W_h + o(h).
\end{multline*}
Passing to a limit we obtain that $\tilde \varrho,\ \tilde \vu$ solves \eqref{eq:ap.prob}$_2$. The limit in the stochastic integral is justified by Lemma \ref{L266} because $\tilde W_h$ are cylindrical Wiener processes and $\tilde W_h \to \tilde W$ in $X_W = C([0,T], \mathfrak{U}_0)$ $\mathbb P$-a.s. and due to convergence in \eqref{eq:h:st}.

\end{proof}

\begin{Remark}
We say that $((\tilde \Omega, \tilde \FF, (\tilde{\FF})_{t\geq0},\tilde {\mathbb P}),\tilde \varrho, \tilde \vu, \tilde W)$ is a \emph{martingale solution} to \eqref{eq:ap.prob}.
\end{Remark}

Next we deduce the pathwise uniqueness result.
\begin{Lemma}\label{lem.unique}
Any two solutions to \eqref{eq:ap.prob} obtained by a previous method with the same initial cylindrical Wiener process are identical.
\end{Lemma}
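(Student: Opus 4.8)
The plan is to establish pathwise uniqueness by a Gronwall argument localized by a stopping time, treating the (noise‑free, parabolic) continuity equation and the finite–dimensional momentum equation simultaneously. Let $(\varrho_1,\vu_1)$ and $(\varrho_2,\vu_2)$ be two solutions of \eqref{eq:ap.prob} on the same stochastic basis with the same cylindrical Wiener process $W$; write $\sigma:=\varrho_1-\varrho_2$, $\vc w:=\vu_1-\vu_2$, and introduce the momentum variable $\vc P_i:=\Pi_m(\varrho_i\vu_i)\in X_m$, so that $\vu_i=\mathcal M^{-1}_{\varrho_i}\vc P_i$ and, by \eqref{eq:ap.prob}$_2$, $\vc P_i$ solves a genuine $X_m$‑valued It\^o equation $\mathrm d\vc P_i=\vc B_i\,\dt+\Sigma_i\,\mathrm dW$, where $\Sigma_i=\Pi_m(\varrho_i\Pi_m(\mathbb F_\varepsilon(\varrho_i,\vu_i)))$ and $\vc B_i$ collects all drift terms. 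Fix $K>0$ and set $\tau_K:=\inf\{t\ge0:\ \|\vu_1(t)\|_{X_m}\vee\|\vu_2(t)\|_{X_m}\ge K\}\wedge T$; since the trajectories of $\vu_i$ are continuous in $X_m$ (see \eqref{eq:h.con.rho}), $\tau_K\uparrow T$ $\PP$‑a.s. as $K\to\infty$, so it suffices to prove $\varrho_1=\varrho_2$, $\vu_1=\vu_2$ on $[0,\tau_K]$. On $[0,\tau_K]$ all quantities are bounded by a constant depending on $K,R,m,\varepsilon,\delta$ and on the deterministic bounds $0<\underline\varrho\le\varrho_i$, $\|\varrho_i\|_{C^{2+\nu}(\TT)}\le\overline\varrho$ inherited from \eqref{eq:rhoh.bound}.

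First I would treat the continuity equation. Because $[\cdot]_R$ is globally bounded and globally Lipschitz on $X_m$, the difference solves the linear parabolic equation $\pat\sigma-\varepsilon\Delta\sigma=-\diver(\sigma[\vu_1]_R)-\diver(\varrho_2([\vu_1]_R-[\vu_2]_R))$ with $\sigma(0)=0$; testing by $\sigma$, using Young's inequality to absorb $\varepsilon\|\nabla\sigma\|_{L^2}^2$, and the finite–dimensional equivalence $\|[\vu_1]_R-[\vu_2]_R\|_{L^2(\TT)}\le C\|[\vu_1]_R-[\vu_2]_R\|_{X_m}\le C_R\|\vc w\|_{X_m}$ gives the pathwise bound $\frac{\mathrm d}{\dt}\|\sigma\|_{L^2}^2\le C\|\sigma\|_{L^2}^2+C_R\|\vc w\|_{X_m}^2$, hence $\|\sigma(t)\|_{L^2}^2\le C_R\int_0^t\|\vc w(s)\|_{X_m}^2\,\mathrm ds$ on $[0,\tau_K]$. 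I would also record the conversion between $\vc w$ and $(\vc P_1-\vc P_2,\sigma)$: writing $\vu_i=\mathcal M^{-1}_{\varrho_i}\vc P_i$ and using $\|(\mathcal M_{\varrho_1}-\mathcal M_{\varrho_2})\vv\|_{X_m}=\|\Pi_m(\sigma\vv)\|_{X_m}\le C\|\sigma\|_{L^2}\|\vv\|_{X_m}$ together with the uniform coercivity $\langle\mathcal M_{\varrho_i}\vv,\vv\rangle\ge\underline\varrho\,c\|\vv\|_{X_m}^2$, one gets $\|\mathcal M^{-1}_{\varrho_1}-\mathcal M^{-1}_{\varrho_2}\|_{X_m\to X_m}\le C\|\sigma\|_{L^2}$, whence on $[0,\tau_K]$
\[
\|\vc w\|_{X_m}\le C\|\vc P_1-\vc P_2\|_{X_m}+C\|\sigma\|_{L^2}\,\|\vc P_2\|_{X_m}\le C\|\vc P_1-\vc P_2\|_{X_m}+C_K\|\sigma\|_{L^2}.
\]

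Next I would handle the momentum equation. Restricted to $[0,\tau_K]$ the drift $\vc B_i$ is Lipschitz in $(\varrho,\vu)$ with respect to $\|\sigma\|_{L^2}+\|\vc w\|_{X_m}$: in the pressure term one integrates by parts, so $\Pi_m(\chi(\|\vu\|_{X_m}-R)\nabla p_\delta(\varrho))$ tested against $\varphi\in X_m$ is $-\int_\TT\chi(\cdots)p_\delta(\varrho)\diver\varphi$ and only $\|p_\delta(\varrho_1)-p_\delta(\varrho_2)\|_{L^2}\le C\|\sigma\|_{L^2}$ enters ($p_\delta$ being $C^1$ on the bounded density range); the convective term $\varrho[\vu]_R\otimes\vu$, the consensus terms $\varrho(\psi*(\varrho\vu))-\varrho\vu(\psi*\varrho)$ and the artificial–viscosity term $\varepsilon\Delta(\varrho\vu)$ are products of factors each either bounded on $[0,\tau_K]$ or Lipschitz; $\diver S(D\vu)$ is linear; and $\varrho\,\nabla K*\varrho$ depends only on $\varrho$. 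Altogether $\|\vc B_1-\vc B_2\|_{X_m}\le C_{K,R,m,\varepsilon,\delta}(\|\sigma\|_{L^2}+\|\vc w\|_{X_m})$, and by the boundedness and Lipschitz continuity of $F_{k,\varepsilon}$ and the summability of $f_{k,\varepsilon}$ in \eqref{eq:bound.F}, $\|\Sigma_1-\Sigma_2\|_{L_2(\mathfrak U,X_m)}^2\le C_{m,\varepsilon}(\|\sigma\|_{L^2}^2+\|\vc w\|_{X_m}^2)$. Applying It\^o's formula (Lemma \ref{thm:ito} with $F=\|\cdot\|_{X_m}^2$) to $\|\vc P_1-\vc P_2\|_{X_m}^2$ at $t\wedge\tau_K$, noting that the stopped stochastic integral is a genuine martingale because its integrand is bounded on $[0,\tau_K]$, and taking expectations yields
\[
\EE\|(\vc P_1-\vc P_2)(t\wedge\tau_K)\|_{X_m}^2\le C\,\EE\!\int_0^{t\wedge\tau_K}\!\big(\|\vc P_1-\vc P_2\|_{X_m}(\|\sigma\|_{L^2}+\|\vc w\|_{X_m})+\|\sigma\|_{L^2}^2+\|\vc w\|_{X_m}^2\big)\,\mathrm ds .
\]

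Finally I would close the estimate. Put $\Phi(t):=\EE\|\sigma(t\wedge\tau_K)\|_{L^2}^2$ and $\Psi(t):=\EE\|(\vc P_1-\vc P_2)(t\wedge\tau_K)\|_{X_m}^2$, both finite on $[0,T]$ by the bounds on $[0,\tau_K]$. Substituting $\|\vc w\|_{X_m}\le C\|\vc P_1-\vc P_2\|_{X_m}+C_K\|\sigma\|_{L^2}$ into the continuity bound of Step~1 and into the momentum bound of Step~3, and using Young's inequality, gives $\Phi(t)+\Psi(t)\le C_K\int_0^t(\Phi(s)+\Psi(s))\,\mathrm ds$; since $\Phi(0)=\Psi(0)=0$ (equal initial data), Gronwall's lemma forces $\Phi\equiv\Psi\equiv0$, so $\varrho_1=\varrho_2$ and $\vc P_1=\vc P_2$ (hence $\vu_1=\vu_2$) on $[0,\tau_K]$ $\PP$‑a.s., and letting $K\to\infty$ finishes the proof. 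The step I expect to be the main obstacle is making the drift Lipschitz bound genuinely uniform — in particular the terms carrying a ``bare'' factor $\vu$ (the convection $\varrho[\vu]_R\otimes\vu$, the artificial viscosity $\varepsilon\Delta(\varrho\vu)$ and the consensus terms), which is exactly what forces the localization by $\tau_K$ — together with correctly coupling the $L^2$‑estimate for the parabolic density equation to the $X_m$‑estimate for the momentum equation through the $\mathcal M^{-1}_\varrho$ identity.
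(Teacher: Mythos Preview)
Your argument is correct and follows essentially the same strategy as the paper: localize by a stopping time, exploit the deterministic density bounds coming from the parabolic continuity equation with the cut-off velocity, establish Lipschitz bounds on drift and diffusion, and close by Gronwall. The only minor variation is that you apply It\^o to $\|\Pi_m(\varrho_1\vu_1)-\Pi_m(\varrho_2\vu_2)\|_{X_m}^2$ and couple it with an $L^2$ estimate for $\sigma$, whereas the paper derives the SDE for $\vu_i=\mathcal M^{-1}_{\varrho_i}\Pi_m(\varrho_i\vu_i)$ directly (which forces the $\partial_t\varrho_i$ terms into the drift) and closes the density difference in $C^{2+\nu}$; your choice is a harmless simplification of the same scheme.
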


\begin{proof}
Let $(\tilde\varrho_1,\tilde \vu_1, \tilde W)$ and $(\tilde\varrho_2,\tilde\vu_2,\tilde W)$ be two solutions to \eqref{eq:ap.prob}. We define a stopping time as follows
\begin{equation*}
\tau_M^i = \inf\{t\in[0,T],\ \|\tilde\varrho_i(t)\|_{C^{2+\nu}(\mathbb T)} + \|(\tilde\varrho_i)^{-1}(t)\|_{L^\infty(\mathbb T)} + \|\tilde\vu_i\|_{X_m}>M\|\}
\end{equation*}
where $M>0$. Further we set $\tau_M = \min\{\tau_M^1,\tau_M^2\}$. Chebyshev's inequality yields together with \eqref{eq:first.est}
\begin{equation*}
\mathbb P (\|\tilde\vu_i\|_{X_m} >M) \leq \frac 1M\mathbb E (\|\tilde\vu_i\|_{X_m})\leq \frac cM.
\end{equation*}
As a result, $\mathbb P(\sup_{M\in\mathbb N}\tau_M = T) = 1$.

We subtract equations for $\tilde \vu_1$ and $\tilde\vu_2$ and we multiply the difference by $\tilde\vu_1-\tilde\vu_2$ to obtain (with the help of Ito's product rule)
\begin{multline*}
\frac 12{\rm d} |\tilde\vu_1 - \tilde\vu_2| = (\tilde\vu_1 - \tilde\vu_2)(\mathcal M^{-1}_{\varrho_1} - \mathcal M^{-1}_{\varrho_2})[\Pi_m(-\diver (\tilde\varrho_1 [\tilde\vu_1]_R\otimes\tilde\vu_1)\\
 - \chi(\|\tilde\vu_1\|_{X_m} - R)\nabla p_\delta(\tilde\varrho_1) + \varepsilon\Delta(\tilde\varrho_1\tilde\vu_1) + \diver S(D\tilde\vu_1) + \tilde\varrho_1\psi*(\tilde\varrho_1\tilde\vu_1) - \tilde\varrho_1\tilde\vu_1(\psi*\tilde\varrho_1)\\
 - \tilde\varrho_1(\nabla K*\tilde\varrho_1) - \partial_t \tilde\varrho_1\tilde\vu_1)]{\rm d}t + (\tilde\vu_1 - \tilde\vu_2) \mathcal M^{-1}_{\varrho_2}[\Pi_m (\diver(\tilde\varrho_2[\tilde\vu_2]_R\otimes\tilde\vu_2) \\
- \diver(\tilde\varrho_1[\tilde\vu_1]_R\otimes\tilde\vu_1) + \chi(\|\tilde\vu_1\|_{X_m}-R)\nabla p_\delta(\tilde\varrho_1) - \chi(\|\tilde\vu_2\|_{X_m} - R)\nabla p_\delta(\tilde\varrho_2)\\
 + \varepsilon\Delta(\tilde\varrho_1\tilde\vu_1-\tilde\varrho_2\tilde\vu_2) + \diver S(D(\tilde\vu_1 - \tilde\vu_2)) + \tilde\varrho_1\psi*(\tilde\varrho_1\tilde\vu_1) - \tilde\varrho_2\psi*(\tilde\varrho_2\tilde\vu_2)\\
 + \tilde\varrho_2\tilde\vu_2 (\psi*\tilde\varrho_2) - \tilde\varrho_1\tilde\vu_1 (\psi*\tilde\varrho_1) + \tilde\varrho_2(\nabla K*\tilde\varrho_2) - \tilde\varrho_1(\nabla K*\tilde\varrho_1) + \partial_t \tilde\varrho_2\tilde\vu_2 - \partial_t\tilde\varrho_1\tilde\vu_1)]{\rm d}t\\
+ \Pi_m(\mathbb F(\tilde\varrho_1,\tilde\vu_1) - \mathbb F(\tilde \varrho_2,\tilde\vu_2)){\rm d}W + \frac 12 \sum_{k=1}^\infty |\Pi_m(F_{k,\varepsilon}(\tilde\varrho_1,\tilde\vu_1) - F_{k,\varepsilon}(\tilde\varrho_2,\tilde\vu_2))|{\rm d}t.
\end{multline*}
Recall that 
\begin{equation*}
\|\mathcal M^{-1}_{\tilde\varrho_1} - \mathcal M^{-1}_{\tilde\varrho_2}\|\leq c(\underline\varrho) \|\tilde\varrho_1 - \tilde\varrho_2\|_{L^1(\mathbb T)}
\end{equation*}
where $\underline \varrho$ is the common lower bound for $\tilde\varrho_1$ and $\tilde\varrho_2$ (see \eqref{eq:rhoh.bound}).
The definition of the stopping time yields
\begin{multline*}
\left|\int_{\mathbb T} (\tilde \vu_1 - \tilde \vu_2)(\mathcal M_{\tilde\varrho_1}^{-1} - \mathcal M_{\tilde\varrho_2}^{-1})(\Pi_m(\tilde\varrho_1\tilde\vu_1(\psi*\tilde\varrho_1)))\ {\rm d}x\right|\\
 + \left|\int_{\mathbb T}(\tilde \vu_1 - \tilde \vu_2)(\mathcal M_{\tilde\varrho_1}^{-1} - \mathcal M_{\tilde\varrho_2}^{-1}) (\Pi_m(\tilde\varrho_1 \psi*(\tilde\varrho_1\tilde\vu_1)))
) \ {\rm d}x\right|\\
 + \left|\int_{\mathbb T} (\tilde \vu_1 - \tilde \vu_2)(\mathcal M_{\tilde\varrho_1}^{-1} - \mathcal M_{\tilde\varrho_2}^{-1}) (\Pi_m (\tilde\varrho_1 (\nabla K*\tilde\varrho_1)
))\ {\rm d}x\right|\\
\leq c(\underline\varrho, M) \|\tilde\vu_1 - \tilde\vu_2\|_{X_m}\|\tilde\varrho_1 - \tilde\varrho_2\|_{L^\infty(\mathbb T)}
\end{multline*}
and
\begin{multline*}
\left|\int_{\mathbb T}(\tilde \vu_1 - \tilde\vu_2)\mathcal M_{\tilde\varrho_2}^{-1}(\Pi_m(\tilde\varrho_1\psi*(\tilde\varrho_1\tilde\vu_1) - \tilde\varrho_2\psi*(\tilde\varrho_2\tilde\vu_2)))\ {\rm d}x\right|\\
\left|\int_{\mathbb T}(\tilde \vu_1 - \tilde\vu_2)\mathcal M_{\tilde\varrho_2}^{-1}(\Pi_m(\tilde\varrho_1\tilde\vu_1(\psi*\tilde\varrho_1) - \tilde\varrho_2\tilde\vu_2(\psi*\tilde\varrho_2)))\ {\rm d}x\right|\\
\left|\int_{\mathbb T}(\tilde \vu_1 - \tilde\vu_2)\mathcal M_{\tilde\varrho_2}^{-1}(\tilde\varrho_1(\nabla K*\tilde\varrho_1) - \tilde\varrho_2(\nabla K*\tilde \varrho_2)
)))\ {\rm d}x\right|\\
\leq c(\underline\varrho,M) \left(\|\tilde\vu_1- \tilde\vu_2\|_{X_m}^2 + \|\tilde\vu_1 - \tilde\vu_2\|_{X_m}\|\tilde\varrho_1 - \tilde\varrho_2\|_{L^\infty(\mathbb T)}\right).
\end{multline*}
Most of other terms can be treated similarly. Due to known method which includes also the Gronwall inequality (see \cite[Corollary 4.10]{BrFeHo}) we infer that the estimate
\begin{multline*}
\mathbb E \left[\sup_{s[0,T]}(\|(\tilde\vu_1 - \tilde \vu_2)(s\wedge \tau_M)\|_{X_m}^2 + \|(\tilde\varrho_1 - \tilde\varrho_2)(s\wedge \tau_M)\|_{C^{2+\nu}(\mathbb T)}^2)\right]\\
\leq c \mathbb E\left[\|(\tilde\vu_1 - \tilde\vu_2)(0)\|_{X_m}^2 + \|(\tilde\varrho_1 - \tilde\varrho_2)(0)\|_{C^{2+\nu}(\mathbb T)}^2\right]
\end{multline*}
holds for some constant $c$ independent of $\tilde\vu$ and $\tilde\varrho$. This concludes the proof.
\end{proof}

%
%

\begin{Corollary}\label{cor.unique}
Let $\tilde \varrho$, $\tilde \vu$, $\tilde W$ are obtained as above.
Then we may assume that there is a triple $(\varrho,\vu,W)$ defined on
the original probability space $\Omega$ such that the laws of
$(\varrho,\vu)$ and $(\tilde \varrho,\tilde \vu)$ coincide, $W$ is the
given Wiener process, $(\varrho,\vu, W)$ is uniquely given and 
solves
\eqref{eq:ap.prob}.

\end{Corollary}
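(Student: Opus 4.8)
The statement is the standard conclusion of the Yamada--Watanabe/Gyöngy--Krylov scheme: existence of a martingale solution (the previous lemma) together with pathwise uniqueness (Lemma \ref{lem.unique}) upgrades the solution to one living on the prescribed stochastic basis. The plan is as follows. Recall that the time-discretized approximations $\varrho_h,\vu_h$ from Section \ref{S:2.2} are defined on the \emph{original} stochastic basis $(\Omega,\FF,(\FF_t)_{t\geq 0},\PP)$ and are driven by the \emph{given} Wiener process $W$. For two parameters $h,h'\in(0,1)$ I would consider the law of the extended collection $(\varrho_h,\vu_h,\varrho_{h'},\vu_{h'},W)$ on the space $\mathcal X_\varrho\times\mathcal X_\vu\times\mathcal X_\varrho\times\mathcal X_\vu\times\mathcal X_W$. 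Exactly the bounds used to establish tightness of $\mathcal L(\varrho_h,\vu_h,W)$ show that the family of these joint laws, indexed by $(h,h')$, is tight; moreover each such law is concentrated on the set where the two $W$-components coincide, since on $\Omega$ they are literally the same process.

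Next I would take an arbitrary subsequence and, via Prokhorov's theorem and the Jakubowski--Skorokhod theorem (applied as in Section \ref{S:2.2} because $\mathcal X$ is only sub-Polish), pass to a further subsequence and a new complete probability space carrying $\mathcal X$-valued variables $(\hat\varrho,\hat\vu,\hat{\hat\varrho},\hat{\hat\vu},\hat W)$ that are the $\PP$-a.s. limits of copies with the original joint law. Repeating verbatim the passage to the limit from the proof of the previous lemma --- in particular identifying the stochastic integral through Lemma \ref{L266} --- both $(\hat\varrho,\hat\vu,\hat W)$ and $(\hat{\hat\varrho},\hat{\hat\vu},\hat W)$ are martingale solutions of \eqref{eq:ap.prob} on this space, adapted to their canonical filtrations and driven by the \emph{same} cylindrical Wiener process $\hat W$ (this is where the concentration on $\{W=W'\}$ is used). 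Lemma \ref{lem.unique} then forces $\hat\varrho=\hat{\hat\varrho}$ and $\hat\vu=\hat{\hat\vu}$ a.s., so the limiting joint law is supported on the diagonal of $(\mathcal X_\varrho\times\mathcal X_\vu)^2$.

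Since every subsequence of the joint laws has a sub-subsequence converging to a measure carried by the diagonal, the Gyöngy--Krylov characterization of convergence in probability (see e.g. \cite{BrFeHo}) yields that $(\varrho_h,\vu_h)$ converges in probability, as $h\to 0$, in $\mathcal X_\varrho\times\mathcal X_\vu$ to a limit $(\varrho,\vu)$ living on $\Omega$. Along an a.s. convergent subsequence one then passes to the limit in the time-discretized identity \eqref{eq:ap.prob3} exactly as in the proof of the previous lemma, using Lemma \ref{L266} for the stochastic term, to conclude that $(\varrho,\vu,W)$ solves \eqref{eq:ap.prob} on $\Omega$ with the prescribed Wiener process $W$; uniqueness of $(\varrho,\vu,W)$ is then immediate from Lemma \ref{lem.unique}.

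I expect the only genuinely delicate point to be the filtration bookkeeping on the auxiliary space obtained from Jakubowski--Skorokhod: one must check that $\hat\varrho,\hat\vu$ and $\hat{\hat\varrho},\hat{\hat\vu}$ are adapted to, and their stochastic integrals taken with respect to, a common filtration making $\hat W$ a cylindrical Wiener process, so that the pathwise uniqueness of Lemma \ref{lem.unique} genuinely applies to the pair of limits. This is handled in the same way as the analogous step in \cite{BrFeHo}; everything else is routine.
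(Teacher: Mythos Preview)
Your proposal is correct and follows essentially the same route as the paper's own proof: both consider the joint law of two approximating sequences $(\varrho_{h},\vu_{h},\varrho_{h'},\vu_{h'},W)$, apply the Jakubowski--Skorokhod theorem to the extended path space, use Lemma~\ref{lem.unique} to force the two limits to coincide, and then invoke the Gy\"ongy--Krylov characterization (in its sub-Polish form, \cite[Theorem 2.10.3]{BrFeHo}) to conclude convergence on the original space. Your write-up is in fact somewhat more detailed than the paper's, which is rather terse on the filtration bookkeeping you flag as the delicate point.
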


\begin{proof}
We consider a sequence of $5$-tuples
$$
\left\{(\varrho_{n_k},\vu_{n_k},\varrho_{m_k},\vu_{m_k},W)\right\}_{k=1}^\infty\subset
\mathcal X_\varrho \times \mathcal X_\vu\times \mathcal X_\varrho\times
\mathcal X_\vu \times \mathcal X_W
$$
where $\{n_k\}_{k=1}^\infty$ and $\{m_k\}_{k=1}^\infty$ are decreasing
sequences with zero limit. By similar arguments as above (with a key step being application of the Jakubowski-Skorokhod theorem on a sequence of joint laws generated by the above mentioned random variables) we deduce that there exist a complete probability space $(\tilde\Omega,\tilde\FF,\tilde{\mathbb{P}})$
and a sequence of Borel measurable random variables $(\tilde\varrho_{n_k},\tilde\vu_{n_k},\tilde\varrho_{m_k},\tilde\vu_{m_k},\tilde W)$ with the same law (passing to a subsequence as the case may be) satisfying
$$
(\tilde \varrho_{n_k},\tilde \vu_{n_k}, \tilde W_k)\to (\tilde
\varrho^1,\tilde \vu^1, \tilde W)
$$
and
$$
(\tilde \varrho_{m_k},\tilde \vu_{m_k}, \tilde W_k)\to (\tilde
\varrho^2,\tilde \vu^2, \tilde W)
$$
$\mathbb P$-almost surely in the corresponding spaces. As a consequence
of the previous lemma we obtain $(\tilde \varrho^1,\tilde \vu^1) =
(\tilde \varrho^2,\tilde \vu^2)$. Therefore we can apply a variant of the Gy\"ongy-Krylov theorem for sub-Polish spaces (see \cite[Theorem 2.10.3]{BrFeHo}) on the sequence $\{(\varrho_{n_k},\vu_{n_k})\}_{k=1}^\infty$ to deduce that it has a subsequence converging $\mathbb{P}$-a.s. to $(\varrho,\vu)$ defined on the original probability space $(\Omega,\FF,\mathbb{P})$. 
The equation \eqref{eq:ap.prob} is then satisfied by $(\varrho,\vu)$ with the initial Wiener process $W$.
\end{proof}

\subsection{Energy estimate}
Take $f(\varrho, \Pi_m(\varrho \vu))  = \frac 12 \int_{\mathbb T} \Pi_m(\varrho \vu) \mathcal M^{-1}_\varrho \Pi_m(\varrho \vu)\ {\rm d}x$. Since
\begin{equation*}
\begin{split}
\partial_{\Pi_m(\varrho\vu)} f(\varrho, \Pi_m(\varrho \vu))& = \mathcal M^{-1}_\varrho \Pi_m(\varrho\vu) = \vu\\
\partial^2_{\Pi_m(\varrho\vu)} f(\varrho, \Pi_m(\varrho \vu)) & = \mathcal M^{-1}_\varrho\\
\partial_\varrho f(\varrho, \Pi_m(\varrho \vu)) &= -\frac 12 \int_{\mathbb T} \Pi_m (\varrho \vu) \mathcal M^{-1}_\varrho \mathcal M_\cdot \mathcal M^{-1}_\varrho \Pi_m(\varrho\vu)\ {\rm d}x,
\end{split}
\end{equation*}
a direct application of Ito's formula and \eqref{eq:ap.prob} yield
\begin{multline*}
\frac 12 \int_{\mathbb T} \varrho \vu^2(T,\cdot) \Phi(T) \ {\rm d}x = \frac 12 \int_{\mathbb T} \varrho \vu^2(0,\cdot) \Phi(0)\ {\rm d}x - \frac 12 \int_0^T\int_{\mathbb T} (-\diver( \varrho [\vu]_R) + \varepsilon \Delta\varrho)\vu^2 \Phi \ {\rm d}x{\rm d}t\\
 + \int_0^T\int_{\mathbb T} \vu \left(  \varepsilon \Delta (\varrho \vu)  + \diver S(D \vu) - \varrho \nabla K*\varrho + \varrho( \psi*(\varrho \vu) - \vu(\psi*\varrho))\right)\Phi \ {\rm d}x{\rm d}t\\
 - \int_0^T\int_{\mathbb T}\vu\left(\diver(\varrho [\vu]_R\otimes \vu) + \chi(\|\vu\|_{X_m} - R)\nabla p_\delta (\varrho)\right)\Phi\ {\rm d}x{\rm d}t + \frac 12 \int_0^T \int_{\mathbb T} \varrho |\vu|^2 \partial_t \Phi\ {\rm d}x{\rm d}t\\
 + \int_0^T \int_{\mathbb T} \varrho \vu \Pi_m(\mathbb F_\varepsilon(\varrho,\vu)) \Phi\ {\rm d}x{\rm d}W + \frac 12 \int_0^T\int_{\mathbb T} \varrho (\Pi_m (\mathbb F_\varepsilon(\varrho,\vu)))^2\ {\rm d}x{\rm d}t
\end{multline*}
for all $\Phi \in C^\infty_c([0,T])$. An integrand $\int_{\mathbb T} \varrho \vu \Pi_m(\mathbb F_\varepsilon(\varrho,\vu))\ {\rm d}x$ of an stochastic integral is to be interpreted as an $L_2(\mathfrak{U},\mathbb{R})$-element, which is done by assigning:
\[
\sum_{k=1}^{\infty} \langle v, e_k\rangle e_k \in \mathfrak{U} \quad \mapsto \quad \sum_{k=1}^{\infty} \langle v, e_k\rangle  \int_{\mathbb T} \varrho \vu \Pi_m(\mathbf F_{k,\varepsilon}(\varrho,\vu))\ {\rm d}x \in \mathbb{R}. 
\]

After a suitable rearrangement, which includes also integration by parts and an application of the continuity equation, we arrive at
\begin{multline*}
\frac 12 \int_{\mathbb T} \varrho |\vu|^2(T,\cdot) \Phi(T)\ {\rm d}x  - \frac 12 \int_{\mathbb T}\varrho |\vu|^2(0,\cdot) \Phi(0)\ {\rm d}x + \int_0^T \int_{\mathbb T} S(D\vu):D\vu  \Phi\ {\rm d}x{\rm d}t\\
 + \varepsilon \int_0^T \int_{\mathbb T} \varrho |\nabla \vu|^2 \Phi \ {\rm d}x{\rm d}t + \frac 12 \int_{\mathbb T} \varrho K*\varrho(T,\cdot) \Phi(T)\ {\rm d}x - \frac 12 \int_{\mathbb T} \varrho_0 K*\varrho_0 \Phi(0)\ {\rm d}x \\ 
 + \frac12\int_0^T \int_{\mathbb T} \int_{\mathbb T} \varrho(t,x) \psi(x-y) \varrho(t,y) (\vu(t,y) - \vu(t,x))^2\Phi(t) \ {\rm d}x{\rm d}y {\rm d}t\\
 + \int_{\mathbb T} P_\delta (\varrho)(T,\cdot) \Phi(T) \ {\rm d} x - \int_{\mathbb T} P_{\delta}(\varrho_0) \Phi(0)\ {\rm d}x + \varepsilon \int_0^T\int_{\mathbb T} |\nabla \varrho|^2 P''_\delta (\varrho)\Phi\ {\rm d}x{\rm d}t\\
 = \frac 12 \int_0^T \int_{\mathbb T} \varrho K*\varrho \partial_t \Phi \ {\rm d}x {\rm d}t + \varepsilon \int_0^T \int_{\mathbb T} (\nabla K*\varrho )\nabla\varrho \Phi\ {\rm d}x{\rm d}t \\
+\int_0^T\int_{\mathbb T}(K*\varrho)\diver (\varrho (\vu - [\vu]_R))\Phi\ {\rm d}x{\rm d}t + \frac 12 \int_0^T\int_{\mathbb T} \varrho \vu^2 \partial_t \Phi \ {\rm d}x{\rm d}t + \int_0^T\int_{\mathbb T} P_\delta(\varrho) \partial_t\Phi\ {\rm d}x{\rm d}t\\
 + \int_0^T \int_{\mathbb T} \varrho \vu \Pi_m (\mathbb F_\varepsilon(\varrho,\vu)) \Phi \ {\rm d}x{\rm d}W + \frac 12 \int_0^T \int_{\mathbb T} \varrho (\Pi_m (\mathbb F_\varepsilon(\varrho,\vu)))^2\Phi \ {\rm d}x{\rm d}t.
\end{multline*}
We remind that $P_\delta$ is defined in \eqref{def.pdelta}.
Take a continuous function
\begin{equation*}
\Phi_{\tau,\kappa}(t) = \left\{
\begin{array}{l}
1,\ t\in [0,\tau-\kappa)\\
0,\ t\in (\tau,T]\\
\mbox{linear otherwise}
\end{array}
\right.
\end{equation*}
where $\tau\in (0,T]$ is arbitrary. The Lebesgue differentiation theorem allows to send $\kappa$ to $0$ in order to deduce

\begin{multline}\label{eq:ene.first}
\frac 12 \int_{\mathbb T} \varrho |\vu|^2(\tau,\cdot) \ {\rm d}x  - \frac 12 \int_{\mathbb T}\varrho |\vu|^2(0,\cdot)\ {\rm d}x + \int_0^\tau \int_{\mathbb T} S(D\vu):D\vu \ {\rm d}x{\rm d}t\\
 + \varepsilon \int_0^\tau \int_{\mathbb T} \varrho |\nabla \vu|^2  \ {\rm d}x{\rm d}t + \frac 12 \int_{\mathbb T} \varrho K*\varrho(\tau,\cdot) \ {\rm d}x - \frac 12 \int_{\mathbb T} \varrho K*\varrho (0,\cdot) \ {\rm d}x \\ 
 + \int_0^\tau \int_{\mathbb T} \int_{\mathbb T} \varrho(t,x) \psi(x-y) \varrho(t,y) (\vu(t,y) - \vu(t,x))^2\ {\rm d}x{\rm d}y {\rm d}t\\
 + \int_{\mathbb T} P_\delta (\varrho)(\tau,\cdot)  \ {\rm d} x - \int_{\mathbb T} P_{\delta}(\varrho)(0,\cdot)\ {\rm d}x + \varepsilon \int_0^\tau\int_{\mathbb T} |\nabla \varrho|^2 P''_\delta (\varrho)\ {\rm d}x{\rm d}t\\
 = \varepsilon \int_0^\tau \int_{\mathbb T} (\nabla K*\varrho )\nabla\varrho\ {\rm d}x{\rm d}t 
+\int_0^\tau\int_{\mathbb T}(K*\varrho)\diver (\varrho (\vu - [\vu]_R))\ {\rm d}x{\rm d}t  \\
+ \int_0^\tau \int_{\mathbb T} \varrho \vu \Pi_m (\mathbb F_\varepsilon(\varrho,\vu))\ {\rm d}x{\rm d}W +\frac 12 \int_0^\tau \int_{\mathbb T} \varrho (\Pi_m (\mathbb F_\varepsilon(\varrho,\vu)))^2 \  {\rm d}x{\rm d}t
\end{multline}
for almost all $\tau\in (0,T)$. Recall that $\|\varrho(\omega,t,\cdot)\|_{L^1(\TT)} = M$ due to \eqref{eq:mass.cons}. This together with $P''_\delta\geq c>0$ for all $\varrho\in [0,\infty)$ yield
\begin{equation*}
\varepsilon \int_{\mathbb T} \nabla K*\varrho \nabla \varrho \leq \frac 12 \varepsilon \int_\TT|\nabla \varrho |^2 P''_\delta (\varrho)\ {\rm d}x + c(K,M) \varepsilon.
\end{equation*}
Next, using the mass conservation law once again, we deduce 
\begin{multline*}
\left|\int_{\mathbb T} (K*\varrho) \diver(\varrho(\vu - [\vu]_R))\ {\rm d}x{\rm d}t\right| = \left|\int_{\mathbb T} (\nabla K*\varrho)(\varrho \vu - \varrho [\vu]_R)\ {\rm d}x\right|\\
\leq c(K,M)  \|\sqrt\varrho\|_{L^2(\TT)}\|\sqrt\varrho |\vu|\|_{L^2(\TT)}
\leq c(K,M) + \|\varrho|\vu|^2\|_{L^1(\TT)},
\end{multline*}
where we used the Young inequality in the very last step. 
Further, we have
\begin{equation*}
\int_{\mathbb T} \varrho (\Pi_m (F_\varepsilon(\varrho,  \vu)))^2\ {\rm d}x \leq \int_{\mathbb T}\varrho\sum_{k=1}^\infty f^2_{k,\varepsilon}\ {\rm d}x\leq c \|\varrho\|_{L^1(\TT)}
\end{equation*}
and
\begin{equation*}
\left| \int_{\mathbb T} \varrho \Pi_m (\mathbb F_{k,\varepsilon}(\varrho,\vu)) \vu \ {\rm d}x \right|^2 \leq f_{k,\varepsilon}^2 \|\varrho\|_{L^1(\TT)} \|\varrho\vu^2\|_{L^1(\TT)}.
\end{equation*}
Thus, it is possible to absorb some terms from the right hand side of \eqref{eq:ene.first} to the left hand side in order to deduce
\begin{multline*}
\frac 12 \int_{\mathbb T} \varrho |\vu|^2(\tau,\cdot) \ {\rm d}x  - \frac 12 \int_{\mathbb T}\varrho |\vu|^2(0,\cdot)\ {\rm d}x + \int_0^\tau \int_{\mathbb T} S(D\vu):D\vu \ {\rm d}x{\rm d}t\\
 + \varepsilon \int_0^\tau \int_{\mathbb T} \varrho |\nabla \vu|^2  \ {\rm d}x{\rm d}t + \frac 12 \int_{\mathbb T} \varrho K*\varrho(\tau,\cdot) \ {\rm d}x - \frac 12 \int_{\mathbb T} \varrho K*\varrho (0,\cdot) \ {\rm d}x \\ 
 + \int_0^\tau \int_{\mathbb T} \int_{\mathbb T} \varrho(t,x) \psi(x-y) \varrho(t,y) (\vu(t,y) - \vu(t,x))^2\ {\rm d}x{\rm d}y {\rm d}t\\
 + \int_{\mathbb T} P_\delta (\varrho)(\tau,\cdot)  \ {\rm d} x - \int_{\mathbb T} P_{\delta}(\varrho)(0,\cdot)\ {\rm d}x + \varepsilon \int_0^\tau\int_{\mathbb T} |\nabla \varrho|^2 P''_\delta (\varrho)\ {\rm d}x{\rm d}t\\
 \leq c(K,M) + \varepsilon c(K,M) +  c(K,M)\int_0^\tau \|\varrho |\vu|^2\|_{L^1(\TT)}\ {\rm d}t
 + \int_0^\tau \int_{\mathbb T} \varrho \vu \Pi_m (\mathbb F_\varepsilon(\varrho,\vu))\ {\rm d}x{\rm d}W.
\end{multline*}
The Burkholder-Davis-Gundy inequality yields 
\begin{equation*}
\mathbb E \left[\sup_{0\leq t\leq \tau}\left|\int_0^t \int_{\mathbb T} \varrho \Pi_m(\mathbb F_\varepsilon(\varrho, \vu)) \vu \ {\rm d}x{\rm d}W\right|^r\right] \leq \mathbb E \left[ \int_0^\tau \sum_{k=1}^\infty \left| \int_{\mathbb T} \varrho \Pi_m (\mathbb F_{k,\varepsilon}(\varrho,\vu)) \vu \ {\rm d}x\right|^2 \ {\rm d}t\right]^{\frac r2}.
\end{equation*}
Recall also that $|P_\delta(\varrho)|\geq c\left(|\varrho|^2 + |\varrho|^6\right)$. We use the Gronwall inequality to infer
\begin{multline*}
\mathbb E \left[\left|\sup_{\tau \in [0,T]}\int_{\mathbb T} \left[\frac 12 \varrho |\vu|^2+\frac 12 \varrho K*\varrho + P_\delta(\varrho)\right](\tau)\ {\rm d}x \right|^r\right]\\
 + \mathbb E \left[\left|\int_0^\tau \int_{\mathbb T} \left[S(D\vu):D\vu + \varepsilon \varrho |\nabla \vu|^2 + \varepsilon |\nabla \varrho|^2 P''_\delta(\varrho)\right]\ {\rm d}x{\rm d}t\right|^r\right]\\
 + \mathbb E \left[\left|\frac12\int_0^\tau \int_{\mathbb T}\int_{\mathbb T} \varrho(t,x)\varrho(t,y) \psi(x-y)(\vu(t,x) - \vu(t,y))^2\ {\rm d}x{\rm d}y{\rm d}t\right|^r\right]\\
\leq c(K,M,init.value)
\end{multline*}

for almost all $\tau \in (0,T)$. 

\subsection{Limit in the Galerkin approximations}

Let $\varrho_R$ and $\vu_R$ be the solution constructed in the previous section corresponding to a certain value $R\in \mathbb R$.
We define a stopping time 
\begin{equation*}
\tau_R = \inf \{\tau \in [0,T],\ \|\vu_R(t)\|_{X_m} >R\}.
\end{equation*}
The solution is determined uniquely up to $\tau_R$ (c.f. Lemma \ref{lem.unique} and Corollary \ref{cor.unique}), i.e. let $R_1<R_2$ and let $\varrho_{R_1},\vu_{R_1}$ and $\varrho_{R_2}$ and $\vu_{R_2}$ be the corresponding solutions, then $(\varrho_{R_1},\vu_{R_1}) = (\varrho_{R_2},\vu_{R_2})$ for $t\in (0,\tau_{R_1})$. Therefore, we may skip index $R$ in our notation. To prove that $\mathbb P(\sup_{R\in \mathbb N} \tau_R = T) = 1$, we recall that 
\begin{equation*}
\underline \varrho e^{-\int_0^\tau\|\nabla \vu\|_\infty\ {\rm d}t} \leq \varrho (\tau,x).
\end{equation*}
Consequently, since $\|\sqrt\varrho \vu(t)\|_{L^2}\leq c$ for some $R$-independent constant $c$ we get
\begin{equation*}
\mathbb E \left[e^{-c\int_0^\tau \|\nabla \vu\|_{L^2(\mathbb T)}^2\ {\rm d}t} \sup_{0\leq t\leq \tau} \|\vu\|_{L^2(\mathbb T)}^2\right]\leq c.
\end{equation*}
Further, we define sequences $a_R$ and $b_R$ such that $a_R\to \infty$, $b_R\to \infty$ as $R\to \infty$ and, simultaneously, $a_Re^{b_R} = R$ for every $R\in \mathbb N$. We introduce
\begin{equation*}
\begin{split}
A & =  \left[e^{-c\int_0^\tau \|\nabla \vu\|_{L^2(\mathbb T)}^2\ {\rm d}t} \sup_{0\leq t\leq \tau} \|\vu\|_{L^2(\mathbb T)}^2\leq a_R\right]\\
B & =  \left[c\int_0^\tau \|\nabla \vu\|_{L^2(\mathbb T)}^2\ {\rm d}t\leq b_R\right]\\
C & =  \left[\sup_{0\leq t\leq \tau} \|\vu\|_{L^2(\mathbb T)}^2\leq a_R e^{b_R}\right].
\end{split}
\end{equation*}
Consequently, we have
$$
\mathbb P(A)\geq 1-\frac c{a_R},\qquad \mathbb P(B) \geq 1- \frac c{b_R}
$$
and since $A\cap B \subset C$
$$
\mathbb P(C) \geq \mathbb{P}(A) + \mathbb{P}(B) - 1 \geq 1-\frac c{a_R} - \frac c{b_R} \to 1\ \mbox{as }R\to \infty
$$
which yields the demanded claim.

As a result, we get $\vu\in C(0,T,X_m)$ and $\varrho \in C^{\nu}(0,T, C^{2+\nu}(\mathbb T))$ satisfying the equation
\begin{equation}
\begin{split}\label{eq:po.R}
{\rm d} \varrho  + \diver (\varrho \vu)\ {\rm d}t & = \varepsilon \Delta \varrho {\rm d}t\\
{\rm d} \Pi_m(\varrho \vu)  + \Pi_m(\diver (\varrho \vu\otimes\vu)){\rm d}t + \Pi_m(\nabla p_\delta(\varrho)){\rm d} t &=  \Pi_m(\varepsilon \Delta (\varrho \vu)){\rm d}t
 + \Pi_m(\diver S(D\vu)){\rm d}t\\ &  - \Pi_m (\varrho (\nabla K*\varrho)){\rm d}t\\
& + \Pi_m (\varrho (\psi*(\varrho \vu) - u(\psi*\varrho)))){\rm d}t\\
& + \Pi_m (\varrho \Pi_m (F_\varepsilon(\varrho,\vu))){\rm d}W
\end{split}
\end{equation}

Our next goal is to proceed with $m$ to infinity. Note that $\varrho$ and $\vu$ constructed by the previous step satisfy \eqref{eq:ene.first} 
 without the second term on the right hand side, i.e.,
\begin{multline}\label{eq:ene.thirtyfour}
\frac 12 \int_{\mathbb T} \varrho |\vu|^2(T,\cdot) \Phi(T)\ {\rm d}x  - \frac 12 \int_{\mathbb T}\varrho |\vu|^2(0,\cdot) \Phi(0)\ {\rm d}x + \int_0^T \int_{\mathbb T} S(D\vu):D\vu  \Phi\ {\rm d}x{\rm d}t\\
 + \varepsilon \int_0^T \int_{\mathbb T} \varrho |\nabla \vu|^2 \Phi \ {\rm d}x{\rm d}t + \frac 12 \int_{\mathbb T} \varrho K*\varrho(T,\cdot) \Phi(T)\ {\rm d}x - \frac 12 \int_{\mathbb T} \varrho_0 K*\varrho_0 \Phi(0)\ {\rm d}x \\ 
 + \frac12\int_0^T \int_{\mathbb T} \int_{\mathbb T} \varrho(t,x) \psi(x-y) \varrho(t,y) (\vu(t,y) - \vu(t,x))^2\Phi(t) \ {\rm d}x{\rm d}y {\rm d}t\\
 + \int_{\mathbb T} P_\delta (\varrho)(T,\cdot) \Phi(T) \ {\rm d} x - \int_{\mathbb T} P_{\delta}(\varrho_0) \Phi(0)\ {\rm d}x + \varepsilon \int_0^T\int_{\mathbb T} |\nabla \varrho|^2 P''_\delta (\varrho)\Phi\ {\rm d}x{\rm d}t\\
 \leq \frac 12 \int_0^T \int_{\mathbb T} \varrho K*\varrho \partial_t \Phi \ {\rm d}x {\rm d}t + \varepsilon \int_0^T \int_{\mathbb T} (\nabla K*\varrho )\nabla\varrho \Phi\ {\rm d}x{\rm d}t \\
+ \frac 12 \int_0^T\int_{\mathbb T} \varrho \vu^2 \partial_t \Phi \ {\rm d}x{\rm d}t + \int_0^T\int_{\mathbb T} P_\delta(\varrho) \partial_t\Phi\ {\rm d}x{\rm d}t\\
 + \int_0^T \int_{\mathbb T} \varrho \vu \Pi_m (\mathbb F_\varepsilon(\varrho,\vu)) \Phi \ {\rm d}x{\rm d}W + \frac 12 \int_0^T \int_{\mathbb T} \varrho (\Pi_m (\mathbb F_\varepsilon(\varrho,\vu)))^2\Phi \ {\rm d}x{\rm d}t
\end{multline}
is valid for all $\Phi\in C^\infty([0,T])$, $\Phi\geq 0$. 
 Let $(\varrho_m,\vu_m)$ be a solution to \eqref{eq:po.R} in a space $X_m,\ m\in \mathbb N$. Since $(\varrho_m,\vu_m)$ satisfies \eqref{eq:ene.thirtyfour}, we deduce the following bound independent of $m$ (recall \eqref{viscosity.ass})
\begin{multline}
\mathbb E \left[(\esssup_{t\in[0,T]} \|\varrho_m |\vu_m|^2(t,\cdot)\|_{L^1(\mathbb T)})^r\right] + \mathbb E\left[(\|\nabla \vu_m\|_{L^2((0,T),L^2(\mathbb T))})^{2r}\right]\\ + \mathbb E\left[(\esssup_{t\in[0,T]} \|\varrho_m\|^\Gamma_{L^\Gamma(\mathbb T)})^r\right]
\leq c\label{eq:supremum}
\end{multline}
where $r>2$ is chosen later. 
Note that the periodic boundary condition imply $\int_\TT \nabla \vu_m \ {\rm d}x = 0$ and thus
$$
\|\nabla \vu_m\|_{L^2(\TT)} \leq c \|D\vu_m \|_{L^2(\TT)}
$$
(see e.g. \cite[Theorem 5.17]{DiRuSc}). Next, we have due to the Poincar\'e inequality and the mass conservation law that\footnote{For a function $f:\TT\mapsto \mathbb R$ we define its average as $(f)_\TT := \int_\TT f \ {\rm d}x$.}
\begin{multline*}
\|\vu_m\|_{L^2(\TT)} \leq \|\vu_m - (\vu_m)_\TT\|_{L^2(\TT)} + |(\vu_m)_\TT|\leq \|D\vu_m\|_{L^2(\TT)} + \frac 1M \int_\TT \varrho_m |(\vu_m)_\TT|\ {\rm d}x\\
 \leq \|D\vu_m \|_{L^2(\TT)}  + \frac 1M\int_\TT \varrho_m |\vu_m - (\vu_m)_\TT|\ {\rm d}x +\frac 1M \int_\TT \varrho_m |\vu_m|\ {\rm d}x\\
\leq c \left(\|D\vu_m\|_{L^2(\TT)} + 
\|\varrho_m\|_{L^\Gamma(\TT)} \|D\vu_m\|_{L^2(\TT)} + \|\varrho\|_{L^1(\TT)}^{\frac 12} \|\varrho_m|\vu_m|^2\|_{L^1(\TT)}^{\frac 12}\right)
\end{multline*}
where $M = (\varrho_m)_\TT$ is constant with respect of $t$ and $m$. This yields that
\begin{equation}\label{eq:odhad.rychlosti}
\mathbb E \left[\|\vu_m\|_{L^2((0,T),W^{1,2}(\TT))}^{ 2 r}\right]\leq c.
\end{equation}
By interpolation we deduce
\begin{equation*}
\|\varrho \vu \|_{L^{\frac{2\Gamma}{\Gamma+1}}(\mathbb T)} \leq c \|\sqrt \varrho\|_{L^{2\Gamma}(\mathbb T)}\|\sqrt \varrho \vu\|_{L^2(\mathbb T)}
\end{equation*}
and thus
\begin{equation*}
\mathbb E \left[(\esssup_{t\in[0,T]} \|\varrho \vu(t,\cdot)\|_{L^{\frac{2\Gamma}{\Gamma+1}}(\mathbb T)})^{\frac{2\Gamma}{\Gamma+1}r}\right]\leq c.
\end{equation*}
We have
\begin{equation*}
\|\varrho \vu\|_{L^1((0,T),L^3(\mathbb T))} \leq \|\varrho \|_{L^\infty((0,T), L^6(\mathbb T))} \|\vu \|_{L^1((0,T),L^6(\mathbb T))}.
\end{equation*}
By interpolation
\begin{equation*}
\|\varrho \vu\|_{L^{\frac{7\Gamma + 3}{3\Gamma + 3}}((0,T)\times \mathbb T)} \leq \|\varrho\vu\|_{L^1((0,T),L^3(\mathbb T))}^{\frac{3\Gamma + 3}{7\Gamma +3}}\|\varrho \vu\|_{L^\infty((0,T), L^{\frac{2\Gamma}{\Gamma+1}}(\mathbb T))}^{\frac{4\Gamma}{7\Gamma + 3}}
\end{equation*}
By known results on the regularity to parabolic equations (see \cite{DeHiPr}) we deduce from the continuity equation that
\begin{equation}\label{eq:derivace.rho}
\mathbb E\left[\|\varrho\|_{L^{\frac{7\Gamma + 3}{3\Gamma + 3}}((0,T),W^{1,\frac{7\Gamma + 3}{3\Gamma + 3}}(\mathbb T))}\right] \leq c
\end{equation}
and, consequently, also
\begin{equation}\label{eq:rho2}
\mathbb E\left[\|\pat \varrho\|_{L^{q}((0,T)\times \mathbb T)} + \|\varrho\|_{L^{q}((0,T),W^{2,q}(\mathbb T))}\right]\leq c.
\end{equation}
for certain $q>1$. 

It is also worth mentioning that \eqref{eq:derivace.rho}, \eqref{eq:odhad.rychlosti} and the regularity theory for the parabolic equations also yield
\begin{equation*}
\mathbb E(\|\varrho\|_{L^1((0,T),L^\infty(\mathbb T))})\leq c.
\end{equation*}

Next we consider the path space
\begin{equation*}
\mathcal X = \mathcal X_{\varrho_0}\times \mathcal X_{\vu_0}\times\mathcal X_{\varrho}\times\mathcal X_{\varrho\vu}\times\mathcal X_{\vu}\times\mathcal X_W,
\end{equation*}
where
\begin{equation*}
\begin{split}
&\mathcal X_{\varrho_0} = C(\mathbb T),\ \mathcal X_{\vu_0} = L^2(\mathbb T),\ \mathcal X_\vu = (L^2((0,T), W^{1,2}(\mathbb T)),w),\ \mathcal X_W = C([0,T], \mathfrak U_0),\\
&\mathcal X_\varrho = (L^{\frac{7\Gamma + 3}{3\Gamma + 3}}((0,T), W^{1,\frac{7\Gamma + 3}{3\Gamma + 3}}(\mathbb T))\cap \\
&\quad W^{1,q}((0,T),L^{q}(\mathbb T))\cap L^{q}((0,T),W^{2,q}(\mathbb T)),w) \cap C_w([0,T],L^\Gamma(\mathbb T)),\\
&\mathcal X_{\varrho \vu} = C([0,T], W^{-k,2}(\mathbb T))\cap C_w([0,T], L^{\frac{2\Gamma}{\Gamma+1}}(\mathbb T)),
\end{split}
\end{equation*}
for sufficiently high integer $k$. Here $C_w(X,Y)$ denotes the space of functions from $X$ to $Y$ continuous with respect to the weak topology of $Y$ and $(X,w)$ denotes space $X$ considered with the weak topology. 

We claim that the set of joint laws
\begin{equation*}
\{\mathcal L (\varrho_0,\vu_{0,m}, \varrho_m, \Pi_m(\varrho_m\vu_m), \vu_m ,W),\ m\in \mathbb N\}
\end{equation*}
is tight on $\mathcal X$. 
Indeed, $\mathcal L[\vu_m]$ is tight on $\mathcal X_\vu$ since the set 
$$
B_L = \{\vu \in L^2((0,T),W^{1,2}(\mathbb T)),\ \|\vu\|_{L^2((0,T),W^{1,2}(\mathbb T))}\leq L\}
$$
is compact in $\mathcal X_\vu$ and 
$$
\mathcal L[\vu_m] (B_L^c) = \mathbb P (\|\vu_m\|_{L^2((0,T),W^{1,2}(\mathbb T))}\geq L)\leq \frac 12 \mathbb E(\|\vu\|_{L^2((0,T),W^{1,2}(\mathbb T))})\leq \frac CL
$$
according to Chebyshev's inequality.

Further, the law $\mathcal L[\varrho_m]$ is tight on $\mathcal X_\varrho$. Indeed, \eqref{eq:supremum} and the continuity equation yields $\mathbb E(\|\varrho_m\|_{C^{0,1}([0,T],W^{-2,12/7}(\mathbb T))})\leq C$. Further, \cite[Lemma 6.2]{NoSt} yields
$$
L^\infty((0,T),L^6(\mathbb T))\cap C^{0,1}([0,T],W^{-2,\frac{2\Gamma}{\Gamma+1}}(\mathbb T))\subset\subset C_w([0,T],L^\Gamma(\mathbb T)).
$$
Thus, $B_L = \{\varrho\in C(L^\Gamma),\ \|\varrho\|_{C^{0,1}([0,T],W^{-2,\frac{2\Gamma}{\Gamma+1}}(\mathbb T))} + \|\varrho\|_{L^\infty((0,T), L^\Gamma(\mathbb T))}\leq L\}$ is compact in $C_w([0,T],L^\Gamma(\mathbb T))$ and $\mathcal L_\varrho (B_L)\geq 1-\frac cL$ due to Chebyshev's inequality.

The tightness of the law $\mathcal L(\varrho_m)$ in $(L^{\frac{7\Gamma + 3}{3\Gamma + 3}}((0,T), W^{1,\frac{7\Gamma + 3}{3\Gamma + 3}}(\mathbb T))\cap W^{1,q}((0,T),L^{q}(\mathbb T))\cap L^{q}((0,T),W^{2,q}(\mathbb T)),w) $ follows immediately from \eqref{eq:rho2}. Next, it is known that $L^\infty((0,T),L^\Gamma(\TT))\cap W^{1,q}((0,T),L^q(\TT))$ is compactly embedded into $C_w([0,T],L^\Gamma)$. This is a consequence of the Arzel\`a-Ascoli theorem (c.f. \cite[Lemma 6.2]{NoSt})\smallskip

Further, $\mathcal L[\Pi_m(\varrho_m\vu_m)]$ is tight in $\mathcal X_{\varrho \vu}$. Indeed, we have
\begin{equation*}
\Pi_m(\varrho_m \vu_m)(\tau) = Y_m(\tau) + \int_0^\tau \Pi_m(\varrho_m \Pi_m(\mathbb F_\varepsilon(\varrho_m, \vu_m)))\ {\rm d}W
\end{equation*}
where 
\begin{multline*}
Y_m(\tau) = \Pi_m(\varrho_m\vu_m)(0) + \int_0^\tau\Pi_m( \varepsilon \Delta(\varrho_m\vu_m))\ {\rm d}t + \int_0^\tau\Pi_m(\diver S(D\vu_m))\ {\rm d}t\\
 - \int_0^\tau\Pi_m(\varrho_m(\nabla K*\varrho_m))\ {\rm d}t + \int_0^\tau\Pi_m(\varrho_m(\psi*(\varrho_m\vu_m) - \vu_m (\psi*\varrho_m)))\ {\rm d}t\\
 - \int_0^\tau\Pi_m(\diver(\varrho_m\vu_m\otimes\vu_m))\ {\rm d}t - \int_0^\tau \Pi_m(\nabla p_\delta(\varrho_m))\ {\rm d}t.
\end{multline*}
We can write $Y_m(\tau) = \Pi_m(\varrho_m\vu_m)(0) + \int_0^\tau Z_m\ {\rm d}t$ and the already deduced bounds \eqref{eq:supremum} and \eqref{eq:odhad.rychlosti} yield $\mathbb E(\|Z_m\|^{r/2}_{L^2((0,T),W^{-3,2}(\TT))})\leq c$. This is enough to obtain 
$$\mathbb E \left[\|Y_m\|_{C^{1/2}((0,T),W^{-3,2}(\TT))}\right]\leq c$$
uniformly in $m$ assuming the initial data are chosen appropriately.\\
Further, we have by the Burkholder-Davis-Gundy inequality (compare with \eqref{Burkholder-davis-gundy})
\begin{equation*}
\mathbb E \left[\left\|\int_0^\cdot \varrho_m \Pi_m (\mathbb F_\varepsilon(\varrho_m,\vu_m))\ {\rm d}W\right\|_{C^{\kappa}([0,T],L^2(\TT))}^r\right]\leq c.
\end{equation*}
The compact embedding
\begin{equation*}
C^\kappa([0,T],W^{-l,2}(\TT))\stackrel{c}{\hookrightarrow} C ([0,T],W^{-k,2}(\TT))
\end{equation*}
for $k>l$ then yields tightness in $C([0,T],W^{-k,2}(\TT))$. The tightness in $C_w([0,T],L^{2\Gamma/(\Gamma + 1)}(\TT))$ then follows from the compact embedding
$$
L^\infty((0,T),L^{\frac{2\Gamma}{(\Gamma +  1)}}(\TT))\cap C^\kappa([0,T],W^{-l,2}(\TT)) \stackrel{c}{\hookrightarrow} C_w([0,T],L^{\frac{2\Gamma}{(\Gamma + 1)}}(\TT))
$$
which can be found in \cite{NoSt}.

Tightnesses of $\mathcal L(\varrho_0)$, $\mathcal L (\vu_{0,m})$ and $\mathcal L(W)$ follow by simple standard arguments.

As a matter of fact, the Jakubowski-Skorokhod theorem yields the existence of a complete probability space $(\tilde \Omega, \tilde \FF, \tilde{\mathbb{P}})$ with $\mathcal{X}$-valued Borel variables
$$
(\tilde \varrho_{0,m}, \tilde \vu_{0,m}, \tilde \varrho_m, \widetilde{\varrho_m\vu_m}, \tilde \vu_m,\tilde W_m)
$$ 
and
$$
(\tilde \varrho_{0}, \tilde \vu_{0}, \tilde \varrho, \widetilde{\varrho\vu}, \tilde \vu,\tilde W)
$$
such that
\begin{enumerate}
\item The laws of $
(\tilde \varrho_{0,m}, \tilde \vu_{0,m}, \tilde \varrho_m, \widetilde{\varrho_m\vu_m}, \tilde \vu_m,\tilde W_m)
$
and
$
( \varrho_{0,m},  \vu_{0,m},  \varrho_m, {\varrho_m\vu_m},  \vu_m, W_m)
$
coincide on $\mathcal X$. 
\item The law of $
(\tilde \varrho_{0}, \tilde \vu_{0}, \tilde \varrho, \widetilde{\varrho\vu}, \tilde \vu,\tilde W)
$ on $\mathcal X$ is a Radon measure.
\item The following convergences hold true $\tilde{\mathbb P}$-almost surely.
\begin{equation}\label{eq.nonmom.conv}
\begin{split}
\tilde \varrho_{0,m}&\to \tilde\varrho_0\ \mbox{in }C(\mathbb T)\\
\tilde \vu_{0,m}&\to \tilde \vu_0 \ \mbox{in }L^2(\mathbb T)\\
\tilde \varrho_m&\to \tilde \varrho \ \mbox{in }L^{\frac{7\Gamma + 3}{3\Gamma + 3}}((0,T),W^{1,\frac{7\Gamma + 3}{3\Gamma + 3}}(\TT))\\
\tilde \varrho_m&\to \tilde \varrho \ \mbox{weakly in } W^{1,q}((0,T),L^{q}(\TT))\cap L^{q}((0,T),W^{2,q}(\TT))\\
\tilde \varrho_m&\to \tilde \varrho \ \mbox{in }C_w((0,T),L^\Gamma(\TT))\\
\widetilde{\varrho_m\vu_m}&\to \widetilde{\varrho \vu}\ {\mbox{in }}C((0,T),W^{-k,2}(\TT))\ \mbox{for some }k\in \mathbb N\\
\widetilde{\varrho_m\vu_m}&\to \widetilde{\varrho \vu}\ \mbox{in }C_w((0,T),L^{\frac{7\Gamma + 3}{3\Gamma + 3}}(\TT))\\
\tilde \vu_m&\to \tilde \vu \ \mbox{weakly in }L^2((0,T),W^{1,2}(\TT))\\
\tilde W_m &\to \tilde W\ \mbox{in }C([0,T],\mathfrak U_0). 
\end{split}
\end{equation}
\end{enumerate}

It should be note that an element $\tilde\vu$ obtained as a limit above is to be interpreted as a random variable with the values in $L^2((0,T),W^{1,2}(\TT))$. It is no longer a classical stochastic process since since the values $\tilde\vu(t,\cdot)$ are not always defined. It becomes useful to introduce a concept of \emph{random variables} -- a generalization of the stochastic processes suitable for this context. A formal definition and properties can be found in \cite[Section 2.2]{BFH17}. For us, it will be important that under certain hypothesis, for a random distribution there exists a progressively measurable stochastic process in the same equivalence class.

By the same arguments as in Section~\ref{S:2.2} above it can be shown that $\tilde W$ is a cylindrical Wiener with respect to
\[
\tilde\FF_t:= \sigma\left( \sigma_t[\tilde\varrho]\cup\sigma_t[\tilde\vu]\cup\bigcup_{k=1}^{\infty} \sigma_t[\tilde W_k]\right), \quad t\in[0,T]
\]
and the corresponding stochastic integral can be well-defined.

As $\tilde \varrho_m \tilde \vu_m \to \tilde \varrho \tilde \vu$ weakly in $L^1((0,T),L^1(\mathbb T))$ $\tilde{\mathbb P}$-a.s., the equality of joint laws
\begin{equation*}
(\varrho_m, \vu_m, \Pi_m(\varrho_m\vu_m))\ \mbox{and } (\tilde \varrho_m, \tilde \vu_m, \widetilde{\varrho_m\vu_m})
\end{equation*}
allows to deduce
\begin{equation}\label{eq.mom.conv}
\widetilde{\varrho_m\vu_m} = \Pi_m (\tilde \varrho_m\tilde\vu_m)\ \mbox{and } \widetilde{\varrho \vu} = \tilde \varrho\tilde \vu.
\end{equation}

It remains to perform limits in the corresponding equations (i.e., \eqref{eq:po.R}). First, let us mention that all the 'convolution' terms converge to its counterparts uniformly, i.e., 
\begin{equation*}
\begin{split}
\nabla K*\tilde\varrho_m &\rightrightarrows \nabla K*\tilde\varrho\\
\psi*\tilde\varrho_m &\rightrightarrows \psi*\tilde\varrho.
\end{split}
\end{equation*}
$\tilde{\mathbb P}$-almost surely. Indeed, let prove (for instance) the second convergence. First, since $\tilde\varrho_m\to\tilde\varrho$ strongly in $C_w((0,T),L^\Gamma(\TT))$, we deduce that
$
\psi*\tilde \varrho_m \to \psi*\tilde\varrho
$
pointwisely. Further, since $\psi\in C^1(\TT)$ and $\pat \tilde\varrho_m\in L^q((0,T),L^q(\TT))$ we deduce a bound on $\nabla_{t,x}(\psi*\tilde\varrho_m)$. As $\TT$ is compact, we use the Arzel\`a-Ascoli theorem to deduce the demanded convergence. 

Next, directly from \eqref{eq.nonmom.conv} and \eqref{eq.mom.conv} we deduce that $\tilde\varrho,\tilde \vu$ satisfies
$$
\pat \tilde\varrho + \diver(\tilde\varrho\tilde\vu) = \varepsilon\Delta\tilde\varrho
$$
in the sense of distribution $\tilde{\mathbb P}$-almost surely. 

Next, it holds that
\begin{equation}\label{eq.nonlin.conv}
\tilde\varrho_m \tilde \vu_m\otimes \tilde \vu_m \to \tilde\varrho \tilde\vu \otimes\tilde\vu \ \mbox{weakly in }L^1((0,T), L^1(\mathbb T)).
\end{equation}
Indeed, since $L^{\frac{2\Gamma}{\Gamma+1}}(\mathbb T)$ is compactly embedded into $W^{-1,2}(\mathbb T)$, we get 
\begin{equation}\label{eq.strong.mom.conv}
\Pi_m(\tilde\varrho_m\tilde\vu_m)\to \tilde\varrho\tilde\vu\ \mbox{ in }L^2((0,T), W^{-1,2}(\mathbb T)).
\end{equation} $\tilde{\mathbb P}$-almost surely.
Further, bounds \eqref{eq:supremum}, \eqref{eq:rho2} allows to deduce
\begin{equation*}
\mathbb E\left(\|\tilde\vu_m\|_{L^2((0,T), W^{1,2}(\mathbb T))}\right)\leq c
\end{equation*}
similarly to \eqref{eq:odhad.rychlosti}. Therefore, we deduce the weak convergence $\tilde\vu_m\to \tilde\vu$ in $L^2((0,T), W^{1,2}(\mathbb T))$ $\tilde{\mathbb P}$-almost surely. This combined with \eqref{eq.strong.mom.conv} imply \eqref{eq.nonlin.conv}.

Next, we claim that
\begin{equation}\label{eq:convergenceF}
\Pi_m[F_{k,\varepsilon}(\tilde\varrho_m,\tilde\vu_m)]\to  F_{k,\varepsilon}(\tilde\varrho,\tilde\vu)\ \mbox{in }L^p(\tilde\Omega\times(0,T)\times \mathbb T)
\end{equation}
for any $p\in (1,\infty)$. Indeed, 
$$
\sqrt{\tilde\varrho_m} \tilde\vu_m \to \sqrt{\tilde\varrho}\tilde \vu
$$
in $L^2((0,T)\times \TT)$ $\tilde{\mathbb P}$-almost surely yielding $\sqrt{\tilde\varrho_m}\tilde\vu_m \to \sqrt{\tilde\varrho}\tilde\vu $ almost everywhere in $\tilde\Omega\times (0,T)\times\TT$. Next, the convergence \eqref{eq.nonmom.conv} yields $\tilde\varrho_m\to\tilde\varrho$ almost everywhere in $\tilde\Omega\times (0,T)\times \TT$. Next, the mapping
$$
(\varrho,\sqrt \varrho \vu)\mapsto F_{k,\varepsilon}\left(\varrho,\frac{\sqrt \varrho \vu}{\sqrt \varrho}\right) 
$$
is continuous and therefore
$$
F_{k,\varepsilon}(\tilde{\varrho}_m,\tilde\vu_m)\to F_{k,\varepsilon}(\tilde\varrho,\tilde\vu)
$$
almost everywhere. The bound \eqref{eq:bound.F} together with the Vitali convergence theorem then yield \eqref{eq:convergenceF}.

Next, the mass conservation law \eqref{eq:mass.cons} implies that 
$\tilde\varrho_m\in L^\infty(\tilde\Omega, L^1((0,T)\times\TT))$ and since $\tilde\varrho_m\in L^r(\tilde\Omega, L^\infty((0,T),L^\Gamma(\TT)))$ due to \eqref{eq:supremum}, we have (once again by the Vitali convergence theorem) that
\begin{equation}\label{eq:lim.m.stoch}
\tilde\varrho_m\to \tilde\varrho\ \mbox{strongly in } L^q(\tilde\Omega\times(0,T)\times\TT)
\end{equation}
for some $q>2$. This together with \eqref{eq:convergenceF} yield
$$
\tilde\varrho_m\Pi_m (F_{k,\varepsilon}(\tilde \varrho_m,\tilde\vu_m)\to \tilde\varrho F_{k,\varepsilon} (\tilde\varrho,\tilde\vu)\ \mbox{strongly in } L^2(\tilde\Omega\times(0,T)\times \TT).
$$
To pass to a limit in the stochastic integral we proceed similarly as in the previous section. At first, we strengthen \eqref{eq:lim.m.stoch} to obtain an appropriate convergence result for the stochastic process $\tilde\varrho_m \Pi_m [\mathbb F_\varepsilon(\tilde\varrho_m ,\tilde\vu_m)]$ and afterward, we apply Lemma~\ref{L266}.

From the properties of $\Pi_m$, inequalities \eqref{eq:bound.F}, \eqref{eq:supremum} we get
\begin{multline*}
\tilde{\mathbb{E}} \int_0^{T} \left\|\int_{\TT} \tilde\varrho_m \Pi_m [\mathbb F_\varepsilon(\tilde\varrho_m ,\tilde\vu_m)] {\rm d}x\right\|^2_{L_2(\mathfrak{U},\mathbb{R})} {\rm d}t = \tilde{\mathbb{E}} \int_0^{T} \sum_{k=1}^{\infty}\left(\int_{\TT} \tilde\varrho_m \Pi_m [F_{k,\varepsilon}(\tilde\varrho_m ,\tilde\vu_m)] {\rm d}x\right)^2 {\rm d}t\\
\leq c\sum_{k=1}^{\infty} f_{k,\varepsilon}^2 \tilde{\mathbb{E}}\int_0^{T}\left(\int_{\TT} \tilde\varrho^q_m {\rm d}x\right)^2{\rm d}t \leq c.
\end{multline*}
which leads to
\[
\tilde\varrho_m \Pi_m [F_{k,\varepsilon}(\tilde\varrho_m ,\tilde\vu_m)] \to \tilde\varrho  F_{k,\varepsilon}(\tilde\varrho ,\tilde\vu) \quad \mbox{in $L^r(\tilde\Omega,L^2((0,T),L^2(\TT)))$}
\]
for some $r>2$. It follows that
\[
\tilde\varrho_m \Pi_m [\mathbb F_\varepsilon(\tilde\varrho_m ,\tilde\vu_m)] \to \tilde\varrho  \mathbb{F}_\varepsilon(\tilde\varrho ,\tilde\vu) \quad \mbox{in $L^2(0,T,L_2(\mathfrak{U},L^2(\TT)))$ $\tilde{\mathbb{P}}$-a.s.}
\]
which together with the convergence of $\tilde W_m$ in \eqref{eq.nonmom.conv} and application of Lemma \ref{L266} gives a desired result.

As a consequence of the previous ideas we get 
\begin{multline}
\int_0^T\int_\TT \varrho \vu \cdot \pat\varphi \ {\rm d}x{\rm d}t  + \int_\TT \varrho_0\vu_0 \varphi(0,\cdot)\ {\rm d}x -\int_\TT \varrho(T,\cdot)\vu(T,\cdot) \varphi(T,\cdot)\ {\rm d}x\\
+ \int_0^T \int_\TT \varrho \vu\otimes\vu:\nabla \varphi + p_\delta(\varrho)\diver \varphi\ {\rm d}x{\rm d}t
 - \int_0^T\int_\TT S(D\vu): D\varphi  \ {\rm d}x{\rm d}t = \\
-\varepsilon\int_0^T\int_\TT \nabla (\varrho \vu) \nabla \varphi  \ {\rm d}x{\rm d}t +
\int_0^T\int_\TT \varrho \nabla K*\varrho  \varphi \ {\rm d}x{\rm d}t\\
 - \int_0^T\int_\TT\varrho(t,x) \varphi(x) \int_\TT \varrho(t,y) \psi(x-y) (\vu(t,y) - \vu(t,x)) \ {\rm d}x{\rm d}y{\rm d}t\\
+ \sum_{k=1}^\infty\int_0^T\int_\TT \vG_k(\varrho,\varrho\vu)\cdot\varphi \ {\rm d}x{\rm d}\tilde{W}_k\label{eq:mom.approx1}
\end{multline}
$\tilde{\mathbb P}$-almost surely for every test function $\varphi \in C_c^\infty([0,T]\times\mathbb T)$

Finally, we need to consider the limit in the energy balance. 
First we claim that
\begin{equation}\label{eq:104}
\int_0^T\int_{\mathbb T} \tilde\varrho_m \Pi_m [\mathbb F_\varepsilon(\tilde\varrho_m ,\tilde\vu_m)]\tilde\vu_m\ {\rm d}x{\rm d}\tilde W_m\to \int_0^T\int_{\mathbb T} \tilde\varrho \mathbb F_\varepsilon(\tilde\varrho,\tilde\vu)\tilde\vu \ {\rm d}x{\rm d}\tilde W
\end{equation}
in $L^2(0,T)$ in probability.

This can be deduced as in the previous subsection.
Taking into account \eqref{eq:104} and the lower-semi-continuity of convex functionals we obtain that the limits $\tilde\varrho,\ \tilde\vu$ satisfy \eqref{eq:ene.thirtyfour} without any projections $\Pi_m$.

\section{Vanishing of auxiliary terms}
We work with a dissipative martingale solution to \label{sec.vat}
\begin{equation}\label{eq:after.galerkin}
\begin{split}
{\rm d} \varrho + \diver (\varrho \vu)\ {\rm d}t  =& \varepsilon \Delta \varrho \ {\rm d}t\\
{\rm d} (\varrho \vu) + \diver (\varrho \vu \otimes \vu)\ {\rm d}t + \nabla p_\delta (\varrho)\ {\rm d}t =& \varepsilon \Delta(\varrho \vu)\ {\rm d}t + \diver S(D\vu)\ {\rm d}t\\ & - \varrho(\nabla K*\varrho)\ {\rm d}t + \varrho(\psi*(\varrho\vu) - \vu(\psi*\varrho))\ {\rm d}t\\
& + \varrho(\mathbb G_\varepsilon(\varrho,\vu))\ {\rm d}W
\end{split}
\end{equation}
and the goal of this section is to go with $\varepsilon$ and $\delta$ to $0$.

\subsection{Limit $\varepsilon \to 0$.}
Throughout this subsection, we consider $\delta>0$ to be fixed.

We assume that there is a probability $\Lambda$ on $L^1(\TT)\times L^1(\TT,\mathbb R^3)$ such that 
\begin{equation}\label{eq:Lambda.epsilon}
\Lambda\{\varrho>0\} =1,\ \Lambda\left\{\underline\varrho\leq \int_\TT \varrho\ {\rm d}x\leq \overline \varrho\right\} = 1\end{equation}
for some deterministic constants $0<\underline\varrho\leq \overline\varrho<\infty$, and
\begin{equation}\label{eq:Lambda.epsilon.2}
\int_{L^1(\TT)\times L^1(\TT,\mathbb R^3)}\left|\int_\TT \frac 12 \frac{|\vq|^2}\varrho+ P_\delta(\varrho)\right|^r\ {\rm d}\Lambda\leq c<\infty
\end{equation}
for some $r\geq 4$. 

Next, let $\varrho_0,\vq_0$ be an initial condition whose law is $\Lambda(\varrho_0,\vq_0)$. We define
$\varrho_{0,\varepsilon}\in C^{2+\nu}(\TT)$ such that $0<\varepsilon\leq \varrho_{0,\varepsilon}\leq \frac 1\varepsilon$, $\underline\varrho/2 \leq \int_\TT \varrho_{0,\varepsilon}\ {\rm d}x \leq 2\overline\varrho$ and
\begin{equation}\label{ini.p.e.1}
\varrho_{0,\varepsilon}\to \varrho_0\ \mbox{in }L^p(\Omega,L^\Gamma(\TT)),\ p\in [1,r\Gamma].
\end{equation}
Further, we define $\vq_{0,\varepsilon}$ such that $\frac{|\vq_{0,\varepsilon}|^2}{\varrho_{0,\varepsilon}}\in L^p(\Omega,L^1(\TT))$, $p\in [1,r]$  and
\begin{equation}
\begin{split}\label{ini.p.e.2}
\vq_{0,\varepsilon} & \to \vq_0\ \mbox{ in }L^p(\Omega;L^1(\TT)),\ p\in [1,r]\\
\frac{\vq_{0,\varepsilon}}{\sqrt{\varrho_{0,\varepsilon}}} & \to \frac{\vq_0}{\sqrt{\varrho_0}}\ \mbox{ in }L^p(\Omega,L^2(\TT)), \ p\in [1,2r].
\end{split}
\end{equation}

We denote by $\varrho_\varepsilon$ and $\vu_\varepsilon$ a solution to \eqref{eq:after.galerkin} associated to some coefficient $\varepsilon >0$. Without loss of generality we can assume that all the solutions are defined on the standard probability space $([0,1], \overline{\mathfrak{B}([0,1])}, \mathfrak{L})$ (it is a consequence of the Jakubowski-Skorokhod theorem). Moreover, we can assume there exists the common Wiener process $W$ for all $\varepsilon$ (because we can carry out the compactness argument from the previous section for any subsequence $\{\varepsilon_k\}_{k\in\mathbb{N}}$ at once).

Note that a solution to \eqref{eq:after.galerkin} is constructed in the previous section and we may assume that $\varrho_\varepsilon$ and $\vu_\varepsilon$ satisfy \eqref{eq:ene.thirtyfour}.
Similarly as before we deduce (also with the help of the mass conservation law)
\begin{equation}
\begin{split}
\forall \tau \in [0,T],\quad\int_{\mathbb T} \varrho(\tau,x)\ {\rm d}x &= \int_{\mathbb T} \varrho_0(x)\ {\rm d}x\\
\mathbb E\left[\left|\sup_{t\in [0,T]} \|\varrho\|_{L^\gamma(\TT)}^\gamma\right|^r\right] + \mathbb E \left[\|\sqrt \varepsilon \nabla \varrho\|_{L^2((0,T), L^2(\Omega))}^{2r}\right] &\leq c\\
\mathbb E \left[\left|\sup_{t\in[0,T]} \|\varrho |\vu|^2\|_{L^1(\mathbb T)}\right|^r + \left|\sup_{t\in[0,T]} \|\varrho \vu\|_{L^{2\Gamma/(\Gamma+1)}(\TT)}^{2\Gamma/(\Gamma+1)}\right|^r\right] &\leq c\\
\mathbb E \left[\|\nabla \vu\|_{L^2((0,T),L^2(\TT))}^{2r}\right] & \leq c
\end{split}\label{eq:odhady.18}
\end{equation}
where $c$ depends on the initial data. 

Next, we have
\begin{Lemma}
There is a constant $c$ such that\label{lem.roucha}
$$
\mathbb E\left[\left| \int_0^T \int_{\mathbb T} p(\varrho)\varrho + \delta \varrho^{\Gamma+1}\ {\rm d}x{\rm d}t\right|\right]\leq c.
$$
\end{Lemma}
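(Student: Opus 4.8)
The plan is to test the momentum equation with a Bogovskii-type corrector. By \eqref{eq:mass.cons} the density has, for every $\varepsilon$, a deterministically bounded conserved mass $M_\varepsilon=\int_\TT\varrho_\varepsilon\,{\rm d}x$, so $\varrho_\varepsilon-\overline\varrho_\varepsilon$ with $\overline\varrho_\varepsilon:=M_\varepsilon/|\TT|$ has zero spatial mean and on the torus we may put
\begin{equation*}
\vph_\varepsilon:=\nabla\Delta^{-1}\big(\varrho_\varepsilon-\overline\varrho_\varepsilon\big),\qquad\diver\vph_\varepsilon=\varrho_\varepsilon-\overline\varrho_\varepsilon .
\end{equation*}
Standard elliptic estimates together with $W^{1,\Gamma}(\TT)\hookrightarrow L^\infty(\TT)$ (recall $\Gamma\geq6>3$) give $\|\vph_\varepsilon(t)\|_{W^{1,\Gamma}(\TT)}+\|\vph_\varepsilon(t)\|_{L^\infty(\TT)}\leq c\|\varrho_\varepsilon(t)\|_{L^\Gamma(\TT)}$, so $\vph_\varepsilon$ inherits the bounds of the energy estimate and of \eqref{eq:odhady.18}. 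I would then apply the Itô product formula of Lemma~\ref{thm:ito} to $t\mapsto\int_\TT\varrho_\varepsilon\vu_\varepsilon\cdot\vph_\varepsilon\,{\rm d}x$. The key structural observation is that the continuity equation \eqref{eq:after.galerkin}$_1$ carries no stochastic forcing, so $\vph_\varepsilon$ is of finite variation in time and the Itô cross-variation term is absent.

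Expanding $\langle{\rm d}(\varrho_\varepsilon\vu_\varepsilon),\vph_\varepsilon\rangle$ from \eqref{eq:after.galerkin}$_2$ and using $\diver\vph_\varepsilon=\varrho_\varepsilon-\overline\varrho_\varepsilon$, the pressure term yields $\int_\TT p_\delta(\varrho_\varepsilon)\varrho_\varepsilon\,{\rm d}x-\overline\varrho_\varepsilon\int_\TT p_\delta(\varrho_\varepsilon)\,{\rm d}x$; from \eqref{eq:after.galerkin}$_1$ one has $\pat\vph_\varepsilon=-\nabla\Delta^{-1}\diver(\varrho_\varepsilon\vu_\varepsilon)+\varepsilon\nabla\varrho_\varepsilon$. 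Integrating in time, taking expectations (the stochastic integral $\int_0^T\int_\TT\vph_\varepsilon\cdot\varrho_\varepsilon\mathbb G_\varepsilon(\varrho_\varepsilon,\vu_\varepsilon)\,{\rm d}x\,{\rm d}W$ has zero mean, its integrand being progressively measurable and square integrable by \eqref{G.law.2} and \eqref{eq:odhady.18}) and rearranging gives
\begin{multline*}
\mathbb E\int_0^T\!\!\int_\TT p_\delta(\varrho_\varepsilon)\varrho_\varepsilon\,{\rm d}x\,{\rm d}t=\mathbb E\Big[\langle\varrho_\varepsilon\vu_\varepsilon,\vph_\varepsilon\rangle\Big]_0^T+\mathbb E\int_0^T\!\overline\varrho_\varepsilon\!\int_\TT p_\delta(\varrho_\varepsilon)\,{\rm d}x\,{\rm d}t\\
-\mathbb E\int_0^T\!\!\int_\TT\varrho_\varepsilon\vu_\varepsilon\cdot\pat\vph_\varepsilon\,{\rm d}x\,{\rm d}t+(\ast),
\end{multline*}
where $(\ast)$ collects the convective term $-\int\varrho_\varepsilon\vu_\varepsilon\otimes\vu_\varepsilon:\nabla\vph_\varepsilon$, the viscous term $\int S(D\vu_\varepsilon):\nabla\vph_\varepsilon$, the artificial-viscosity term $\varepsilon\int\nabla(\varrho_\varepsilon\vu_\varepsilon):\nabla\vph_\varepsilon$ and the non-local terms $\int\varrho_\varepsilon(\nabla K*\varrho_\varepsilon)\cdot\vph_\varepsilon$ and $\int\varrho_\varepsilon(\psi*(\varrho_\varepsilon\vu_\varepsilon)-\vu_\varepsilon(\psi*\varrho_\varepsilon))\cdot\vph_\varepsilon$, all integrated over $(0,T)$ and in expectation.

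It then remains to bound each term on the right, uniformly in $\varepsilon$, using \eqref{eq:odhady.18}, the $L^p(\TT)$-boundedness of the Riesz-type operators $\nabla\Delta^{-1}$ and $\nabla\Delta^{-1}\diver$, and Hölder's inequality. The endpoint contributions are controlled by $\|\varrho_\varepsilon\vu_\varepsilon\|_{L^{2\Gamma/(\Gamma+1)}(\TT)}\|\vph_\varepsilon\|_{L^\infty(\TT)}$; the term $\int\varrho_\varepsilon\vu_\varepsilon\cdot\nabla\Delta^{-1}\diver(\varrho_\varepsilon\vu_\varepsilon)$ by $\|\varrho_\varepsilon\vu_\varepsilon\|_{L^2((0,T)\times\TT)}^2$, finite because $\varrho_\varepsilon\vu_\varepsilon\in L^1((0,T),L^3)\cap L^\infty((0,T),L^{2\Gamma/(\Gamma+1)})$ interpolates into $L^2((0,T)\times\TT)$ when $\Gamma\geq3$; the convective term by $\|\varrho_\varepsilon|\vu_\varepsilon|^2\|_{L^1((0,T),L^{3\Gamma/(\Gamma+3)})}\|\nabla\vph_\varepsilon\|_{L^\infty((0,T),L^\Gamma)}$ (the Hölder exponents sum to at most $1$ since $\Gamma\geq3$); the viscous term by $\|D\vu_\varepsilon\|_{L^2((0,T)\times\TT)}\|\nabla\vph_\varepsilon\|_{L^2((0,T)\times\TT)}$; the artificial-viscosity term and the $\varepsilon\nabla\varrho_\varepsilon$ part of $\pat\vph_\varepsilon$, after one integration by parts, both reduce to a multiple of $\varepsilon\int\varrho_\varepsilon^2\diver\vu_\varepsilon$, bounded by $\varepsilon\|\varrho_\varepsilon\|_{L^\infty((0,T),L^4)}^2\|\diver\vu_\varepsilon\|_{L^2((0,T)\times\TT)}$; the non-local terms because $\nabla K*\varrho_\varepsilon$, $\psi*\varrho_\varepsilon$ and $\psi*(\varrho_\varepsilon\vu_\varepsilon)$ are bounded in $L^\infty$ by $K\in C^2(\TT)$, $\psi\in C^1(\TT)$ and the $L^1$-bounds on $\varrho_\varepsilon$ and $\varrho_\varepsilon\vu_\varepsilon$; and $\overline\varrho_\varepsilon\int p_\delta(\varrho_\varepsilon)$ by $c(1+\|\varrho_\varepsilon\|_{L^\Gamma(\TT)}^\Gamma)$ thanks to \eqref{p.law}. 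Since $p_\delta(\varrho_\varepsilon)\varrho_\varepsilon=p(\varrho_\varepsilon)\varrho_\varepsilon+\delta\varrho_\varepsilon^{\Gamma+1}+\delta\varrho_\varepsilon^{3}\geq p(\varrho_\varepsilon)\varrho_\varepsilon+\delta\varrho_\varepsilon^{\Gamma+1}\geq0$, combining these bounds yields the claim. The main obstacle is the rigorous justification of the Itô product formula for the pairing of the two low-regularity (negative-Sobolev-valued) processes $\varrho_\varepsilon\vu_\varepsilon$ and $\vph_\varepsilon$ — the latter being itself built from the unknown $\varrho_\varepsilon$ — which is carried out through the standard mollification procedure, as in the deterministic theory and its stochastic analogue in \cite{BrFeHo}, taking care that the $\varepsilon$-dependent terms remain bounded as $\varepsilon\to0$.
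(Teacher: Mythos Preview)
Your proof is correct and follows essentially the same route as the paper: test the momentum equation with $\nabla\Delta^{-1}(\varrho_\varepsilon-(\varrho_\varepsilon)_\TT)$, isolate the pressure term $\int p_\delta(\varrho_\varepsilon)\varrho_\varepsilon$, and bound the remaining contributions using the energy estimates \eqref{eq:odhady.18}, the $L^p$-boundedness of $\nabla^2\Delta^{-1}$, and the embedding $W^{1,\Gamma}\hookrightarrow L^\infty$.

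The only noteworthy difference is your treatment of the stochastic term. You take expectations and use that the stochastic integral has zero mean (which is legitimate once you have verified, as you indicate, that the integrand lies in $L^2(\Omega\times(0,T);L_2(\mathfrak U,\mathbb R))$ so that the integral is a true martingale). The paper instead keeps the identity pointwise in $\omega$ and bounds the stochastic term $\mathcal I_{10}$ in expectation via the Burkholder--Davis--Gundy inequality. Both are valid; your route is marginally shorter here because the integrand $p_\delta(\varrho)\varrho$ is nonnegative and the absolute value in the statement can be dropped, whereas the paper's BDG approach would also control higher moments if those were needed. The paper also makes explicit the reference to \cite[Theorem~A.4.1]{BrFeHo} for the rigorous testing/It\^o-product step that you flag as the main technical obstacle.
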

\begin{proof}
We take a test function $\varphi = \nabla \Delta^{-1} (\varrho - (\varrho)_{\mathbb T})$ in \eqref{eq:mom.approx1} (see also \cite[Theorem A.4.1]{BrFeHo}).
We get
\begin{multline*}
\int_0^T\int_\mathbb T p_\delta(\varrho)  \varrho \ {\rm d}x {\rm d}t\\
 =\int_0^T\int_{\TT} \varrho \vu (\nabla\Delta^{-1}\diver (\varrho \vu))\ {\rm d}x{\rm d}t -\int_{\TT} \varrho_0\vu_0 \nabla \Delta^{-1}(\varrho_0 - (\varrho_0)_\TT)\ {\rm d}x\\
+ \int_\TT \varrho(T,\cdot)\vu(T,\cdot) \nabla \Delta^{-1}(\varrho(T,\cdot) - (\varrho(T,\cdot))_\TT)\ {\rm d}x\\
 - \int_0^T\int_\TT \varrho \vu \otimes\vu :\nabla^2 \Delta^{-1} (\varrho -(\varrho)_\TT)\ {\rm d}x{\rm d}t \\
 +\int_0^T \int_\TT p_\delta(\varrho)(\varrho)_\TT\ {\rm d}x{\rm d}t + \int_0^T\int_\TT S(D\vu): D(\nabla\Delta^{-1} (\varrho - (\varrho)_{\TT}))\ {\rm d}x{\rm d}t\\
 -\varepsilon\int_0^T \int_\TT\nabla(\varrho \vu)\nabla^2 \Delta^{-1} (\varrho - (\varrho)_\TT) \ {\rm d}x{\rm d}t + \int_0^T\int_\TT \varrho (\nabla K*\varrho)\nabla\Delta^{-1} (\varrho - (\varrho)_\TT)\ {\rm d}x{\rm  d}t\\
-\int_0^T\int_\TT \varrho \nabla\Delta^{-1} (\varrho - (\varrho)_\TT) (\psi*(\varrho \vu) - \vu (\psi*\varrho)) \ {\rm d}x{\rm d}t\\
+\sum_{k=1}^\infty \int_0^T \int_\TT \mathbb G_{k,\varepsilon}(\varrho,\varrho\vu) \cdot \nabla\Delta^{-1}(\varrho - (\varrho)_\TT)\ {\rm d}x{\rm d}W_k
 =: \sum_{i=1}^{10} \mathcal I_i.
\end{multline*}

Clearly, the boundedness of the right hand side yields the demanded claim. Note also that $\Delta^{-1}:W^{k,p}(\TT)\mapsto W^{k+2,p}(\TT)$ is a bounded linear operator for $k \in \{-1,0,1,\ldots\}$ and $p\in (1,\infty)$. 
In what follows, we extensively use the energetic estimates as well as the fact that they imply (with help of \cite[Lemma 3.2]{feireisl}) that
\begin{equation*}
\mathbb E \left[\|\vu\|^{2r}_{L^2((0,T), L^6(\TT))}\right]\leq c.
\end{equation*}
We have
\begin{equation*}
|\mathcal I_1|\leq c\int_0^T\int_\TT |\varrho \vu|^2 \ {\rm d}x{\rm d}t\leq \int_0^T\|\varrho\|_{L^4(\TT)}^2 \|\vu\|_{L^4}^2\leq \|\varrho\|_{L^\infty((0,T),L^\Gamma(\TT))}^2 \|\vu\|_{L^2((0,T), L^4(\TT))}^2.
\end{equation*}
The bound of $\mathcal I_2$ comes from the assumptions imposed on initial data. Next, since $\varrho\in L^\infty((0,T),L^\Gamma(\TT))$ implies $\nabla\Delta^{-1} (\varrho - (\varrho)_\TT)\in L^\infty((0,T),W^{1,\Gamma}(\TT))\subset L^\infty((0,T), L^\infty(\TT))$ we get by the H\"older inequality
\begin{multline*}
|\mathcal I_3| \leq \int_\TT \sqrt \varrho (\sqrt \varrho |\vu|) |\nabla \Delta^{-1} (\varrho - (\varrho)_\TT)|\ {\rm d}x \\
\leq \|\nabla\Delta^{-1}(\varrho - (\varrho)_\TT)\|_{L^\infty((0,T)\times \TT)} \|\varrho\|_{L^\infty((0,T),L^\Gamma(\TT))}^{\frac 12} \|\varrho |\vu|^2\|_{L^\infty((0,T), L^1(\TT))}^{\frac 12}.
\end{multline*}
Further,
\begin{equation*}
|\mathcal I_4| \leq c \|\varrho\|_{L^\infty((0,T),L^\Gamma(\TT))}^2  \|\vu\|_{L^2((0,T),L^6(\TT))}^2.
\end{equation*}
Recall that the mass conservation law implies $(\varrho)_\TT$ is constant over time and thus
\begin{equation*}
|\mathcal I_5| \leq c \int_0^T\int_\TT p_\delta(\varrho)\ {\rm d}x{\rm d}t\leq c \|\varrho\|_{L^\infty((0,T),L^\Gamma(\TT))}.
\end{equation*}
Next, the Sobolev embedding yields
\begin{multline*}
|\mathcal I_6| \leq c\|\vu\|_{L^2((0,T),W^{1,2}(\TT))} \|\Delta^{-1}(\varrho - (\varrho)_\TT)\|_{L^2((0,T),W^{1,2}(\TT))}\\
\leq  c\|\vu\|_{L^2((0,T),W^{1,2}(\TT))} \|\varrho\|_{L^\infty((0,T),L^\Gamma(\TT))}
\end{multline*}
We have 
\begin{multline*}
|\mathcal I_7| \leq c\varepsilon \int_0^T\int_\TT |\nabla(\varrho\vu)|\varrho\ {\rm d}x{\rm d}t\leq c\varepsilon \int_0^T\int_\TT |\nabla \varrho||\vu|\varrho \ {\rm d}x{\rm d}t + c\varepsilon \int_0^T\int_\TT  \varrho^2 |\nabla \vu|\ {\rm d}x{\rm d}t\\
\leq c\varepsilon\left( \|\nabla \varrho\|_{L^2((0,T)\times\TT)} \|\vu\|_{L^2((0,T),L^6(\TT))}\|\varrho\|_{L^\infty((0,T),L^\Gamma(\TT))}\right.\\\left. + \|\varrho\|_{L^\infty((0,T),L^4(\TT))}^2 \|\nabla\vu\|_{L^2((0,T),L^2(\TT))}^2\right).
\end{multline*}
and \eqref{eq:odhady.18} yields $\mathcal E[|I_7|]\leq c$.
Further, 
$$
|\mathcal I_8|\leq c(K,M) \int_0^T\int_\TT \varrho \nabla\Delta^{-1} (\varrho - (\varrho)_\TT)\ {\rm d}x{\rm d}t \leq c(K,M) \|\varrho\|_{L^\infty((0,T),L^2(\TT))}^2.
$$
Next,
$$
|\mathcal I_9|\leq c(\psi) \|\varrho\vu\|_{L^\infty((0,T),L^1(\TT))} \|\varrho\|_{L^2((0,T)\times \TT)}^2 + c(\psi,M) \|\varrho\|_{L^4((0,T)\times\TT)}^4 \|\vu\|^2_{L^2((0,T)\times \TT)}
$$
and thus also $\mathcal I_9$ can be controlled by estimates in \eqref{eq:odhady.18}. \\
In order to bound $\mathcal I_{10}$ we employ the Burkholder-Davis-Gundy inequality to deduce
\begin{equation*}
\mathbb E[|\mathcal I_{10}|^r]\leq \mathbb E\left[\int_0^T\sum_{k=1}^\infty \left|\int_\TT \varrho F_{k,\varepsilon} (\varrho,\vu) \nabla \Delta^{-1} (\varrho - (\varrho)_\TT)\ {\rm d}x\right|^2\ {\rm d}t\right]^{r/2}.
\end{equation*}
Next, \eqref{eq:bound.F1} yields $|F_{k,\varepsilon}(\varrho,\vu)|\leq f_k|1+\vu|$ and thus we have
\begin{equation*}
\left|\int_\TT \varrho F_{k,\varepsilon} \nabla \Delta^{-1} (\varrho - (\varrho)_\TT)\ {\rm d}x\right|\leq f_k\|\varrho\|_{L^\infty((0,T),L^\Gamma(\TT))}\int_\TT \varrho + |\varrho \vu|\ {\rm d}x
\end{equation*}
which yields the demanded bound of $\mathcal I_{10}$.
\end{proof}

We introduce a Young measure $\nu_\varepsilon: [0,T]\times \mathbb T \mapsto \mathbb P(\mathbb R\times \mathbb R^3 \times \mathbb R^{3\times 3})$ as
\begin{equation*}
\nu_{\varepsilon,t,x} = \delta_{\varrho_\varepsilon(t,x), \vu_\varepsilon(t,x), \nabla \vu_{\varepsilon}(t,x)}.
\end{equation*}

Next,
\begin{equation*}
\mathcal X := \mathcal X_{\varrho_0} \times \mathcal X_{\vq_0} \times \mathcal X_{\vq_0/\sqrt{\varrho_0}} \times \mathcal X_\varrho\times \mathcal X_{\varrho \vu} \times \mathcal X_\vu \times \mathcal X_W \times \mathcal X_E \times \mathcal X_\nu.
\end{equation*}
where
\begin{equation*}
\begin{split}
\mathcal X_{\varrho_0} & = L^\Gamma (\mathbb T)\\
\mathcal X_{\vq_0} & = L^1(\mathbb T)\\
\mathcal X_{\vq_0/\sqrt{\varrho_0}} & = L^2(\mathbb T)\\
\mathcal X_\varrho & = L^{\Gamma+1}((0,T)\times \mathbb T,w)\cap C_w ([0,T], L^\Gamma(\mathbb T))\\
\mathcal X_{\varrho \vu} & = C_w ([0,T], L^{2\Gamma/(\Gamma+1)}(\mathbb T)) \cap C([0,T], W^{-k,2}(\mathbb T)),\ k>\frac 52\\
\mathcal X_\vu & = L^2((0,T),W^{1,2},w)\\
\mathcal X_W &  = C([0,T],\mathfrak U)\\
\mathcal X_E & = L^\infty([0,T], \mathcal M_b(\mathbb T), w^*)\\
\mathcal X_\nu & = L^\infty_{w^*}((0,T)\times \mathbb T, w^*)
\end{split}
\end{equation*}
Let $E_\delta(\varrho_\varepsilon,\vu_\varepsilon)$ be defined as
$$
E_\delta(\varrho_\varepsilon,\vu_\varepsilon) = \frac 12 \varrho_\varepsilon |\vu_\varepsilon|^2 + P_\delta(\varrho_\varepsilon) + \frac 12 \varrho_\varepsilon K*\varrho_\varepsilon.
$$
We are going to prove that
$$
\left\{\mathcal L\left[\varrho_{0,\varepsilon}, \vq_{0,\varepsilon}, \frac{\vq_{0,\varepsilon}}{\sqrt{\varrho_{0,\varepsilon}}}, \varrho_\varepsilon, \varrho_\varepsilon \vu_\varepsilon, \vu_\varepsilon, W, E_\delta(\varrho_\varepsilon,\vu_\varepsilon), \nu_\varepsilon\right],\varepsilon\in (0,1)\right\}
$$
is tight on $\mathcal X$. This is going to be done in several steps.

First, the tightness of laws of $\varrho_{0,\varepsilon}$, $\vq_{0,\varepsilon}$ and $\frac{\vq_{0,\varepsilon}}{\varrho_{0,\varepsilon}}$ follows directly from the definition of initial conditions (see \eqref{ini.p.e.1} and \eqref{ini.p.e.2}). The tightness of $W$ is also clear. 

The tightness of $\mathcal L[\vu_\varepsilon]$ in $\mathcal X_\vu$ can be obtained similarly as in the previous chapter -- see \eqref{eq:u.tight}.  The same true is also for the tightness of $\mathcal L[\varrho_\varepsilon \vu_\varepsilon]$ in $\mathcal X_{\varrho \vu}$ and $\mathcal L[\varrho_\varepsilon]$ in $\mathcal X_\varrho$. 

Next, $\mathcal L[E_\delta(\varrho_\varepsilon,\vu_\varepsilon)]$ is tight in $\mathcal X_E$. This follows directly from the definition of $E_\delta $ and \eqref{eq:odhady.18}. 

In order to prove that $\{\mathcal L[\nu_\varepsilon],\varepsilon \in (0,1)\}$ is tight on $\mathcal X_\nu$ we use \cite[Corollary 2.8.6]{BrFeHo}. We define
$$
B_R:=\left\{\nu \in L^\infty_{w^*}((0,T)\times \TT, \mathbb P(\mathbb R^{13})),\ \int_0^T\int_\TT\int_{\mathbb R^{13}} \left(|\xi_1|^{\Gamma+1} +\sum_{i=1}^{13}|\xi_i|^2\right)\ {\rm d}\nu_{t,x}(\xi){\rm d}x{\rm d}t\leq R\right\}. 
$$
this set is relatively compact in $(L^\infty_{w^*}((0,T)\times \TT, \mathbb P(\mathbb R^{13})),w^*)$. Further, we deduce that
\begin{multline*}
\mathcal L[\nu_\varepsilon](B_R^c) = \mathbb P\left(\int_0^T\int_\TT\int_{\mathbb R^{13}} \left(|\xi_1|^{\Gamma+1} +\sum_{i=1}^{13}|\xi_i|^2\right)\ {\rm d}\nu_{t,x}(\xi){\rm d}x{\rm d}t > R\right)\\
 = \mathbb P \left(\int_0^T\int_\TT |\varrho_\varepsilon|^{\Gamma+1} + |\vu_\varepsilon|^2 + |\nabla\vu_\varepsilon|^2\ {\rm d}x{\rm d}t>R\right)\leq  \frac CR.
\end{multline*}
which yields the desired tightness.

Therefore, we use the Jakubowski-Skorokhod theorem in order to deduce the existence of 
$$
\left(\tilde\varrho_{0,\varepsilon}, \tilde\vq_{0,\varepsilon}, {\frac{\tilde\vq_{0,\varepsilon}}{\sqrt{\varrho_{0,\varepsilon}}}}, \tilde\varrho_\varepsilon, \widetilde{\varrho_\varepsilon \vu_\varepsilon}, \tilde\vu_\varepsilon, \tilde W_\varepsilon, \tilde E_\delta(\varrho_\varepsilon,\vu_\varepsilon), \tilde\nu_\varepsilon\right)
$$
and 
$$
\left(\tilde\varrho_{0}, \tilde\vq_{0}, {\frac{\tilde\vq_{0}}{\sqrt{\varrho_{0}}}}, \tilde\varrho, \widetilde{\varrho\vu}, \tilde\vu, \tilde W, \tilde E_\delta(\varrho,\vu), \tilde\nu\right)
$$
such that
\begin{itemize}
\item the laws of $$\left(\tilde\varrho_{0,\varepsilon}, \tilde\vq_{0,\varepsilon}, {\frac{\tilde\vq_{0,\varepsilon}}{\sqrt{\varrho_{0,\varepsilon}}}}, \tilde\varrho_\varepsilon, \widetilde{\varrho_\varepsilon \vu_\varepsilon}, \tilde\vu_\varepsilon, \tilde W_\varepsilon, \tilde E_\delta(\varrho_\varepsilon,\vu_\varepsilon), \tilde\nu_\varepsilon\right)$$ and  $$\left(\varrho_{0,\varepsilon}, \vq_{0,\varepsilon}, \frac{\vq_{0,\varepsilon}}{\sqrt{\varrho_{0,\varepsilon}}}, \varrho_\varepsilon, \varrho_\varepsilon \vu_\varepsilon, \vu_\varepsilon, W_\varepsilon, \tilde E_\delta(\varrho_\varepsilon,\vu_\varepsilon), \nu_\varepsilon\right)$$ coincide on $\mathcal X$.
\item The law of $\left(\tilde\varrho_{0}, \tilde\vq_{0}, \tilde{\frac{\vq_{0}}{\sqrt{\varrho_{0}}}}, \tilde\varrho, \widetilde{\varrho\vu}, \tilde\vu, \tilde W, \tilde E_\delta(\varrho,\vu), \tilde\nu\right)$ on $\mathcal X$ is a Radon measure.
\item The sequence $$
\left(\tilde\varrho_{0,\varepsilon}, \tilde\vq_{0,\varepsilon}, {\frac{\tilde\vq_{0,\varepsilon}}{\sqrt{\varrho_{0,\varepsilon}}}}, \tilde\varrho_\varepsilon, \widetilde{\varrho_\varepsilon \vu_\varepsilon}, \tilde\vu_\varepsilon, \tilde W_\varepsilon, \tilde E_\delta(\varrho_\varepsilon,\vu_\varepsilon), \tilde\nu_\varepsilon\right)
$$ 
converge to $$\left(\tilde\varrho_{0}, \tilde\vq_{0}, {\frac{\tilde\vq_{0}}{\sqrt{\varrho_{0}}}}, \tilde\varrho, \widetilde{\varrho\vu}, \tilde\vu, \tilde W, \tilde E_\delta(\varrho,\vu), \tilde\nu\right)$$ in $\mathcal X$ almost surely.
\end{itemize}
We obtained that the random distribution $[\tilde \varrho, \tilde \vu, \tilde W]$ satisfies
\begin{equation*}
{\rm d}\tilde\varrho + \diver \tilde\varrho\tilde \vu {\rm d}t = 0
\end{equation*}
in the sense of distributions 
and\footnote{Here we adopt the following notation. For a general function $f(\varrho_\varepsilon,\vu_\varepsilon)$ we use $\overline{f(\varrho,\vu)}$ to denote its weak limit (which is generaly not equal to $f(\varrho,\vu)$).}
\begin{multline}\label{limita.po.epsilon}
\int_0^T \int_\TT \tilde\varrho \tilde\vu \partial_t\varphi \ {\rm d}x{\rm d}t + \int_\TT \tilde \varrho_0 \tilde\vu_0 \varphi(0,\cdot)\ {\rm d}x - \int_\TT \tilde\varrho (T,\cdot)\tilde\vu(T,\cdot)\varphi(T,\cdot)\ {\rm d}x\\
 + \int_0^T\int_\TT \tilde\varrho \vu \otimes \tilde\vu :\nabla \varphi + \overline{p_\delta(\tilde\varrho)}\diver \varphi\ {\rm d}x{\rm d}t
 - \int_0^T\int_\TT S(D\overline u): D\varphi \ {\rm d}x{\rm d}t = 
\int_0^T\int_\TT \tilde\varrho \nabla K*\tilde\varrho \varphi \ {\rm d}x{\rm d}t\\
-\int_0^T\int_\TT \tilde\varrho \varphi (\psi*(\tilde\varrho\tilde\vu)-\tilde\vu(\psi*\tilde\varrho))\ {\rm d}x{\rm d}t\\
+ \sum_{k=1}^\infty \int_0^T\int_\TT \overline{\vG_k (\tilde\varrho,\tilde\varrho\tilde\vu)}\cdot \varphi \ {\rm d}x{\rm d}W.
\end{multline}
This follows as soon as we prove that 
\begin{equation}\label{eq:F.convergence}
\tilde \varrho_\varepsilon F_{k,\varepsilon}(\tilde\varrho_\varepsilon,\tilde\vu_\varepsilon)\to \overline{\tilde \varrho F_k (\tilde\varrho,\tilde\vu)} = \overline{G_k(\tilde\varrho,\tilde\varrho\tilde\vu)}
\end{equation}
in $L^2((0,T), W^{-l,2}(\TT))$. First, we have
\begin{multline*}
\int_\TT \tilde\varrho_\varepsilon |F_{k,\varepsilon}(\tilde\varrho_\varepsilon, \tilde \vu_\varepsilon) - F_k(\tilde \varrho_\varepsilon,\tilde\vu_\varepsilon)|\ {\rm d}x \leq \int_{\tilde\varrho_\varepsilon<\varepsilon} \tilde\varrho_\varepsilon |F_k(\tilde\varrho_\varepsilon,\tilde \vu_\varepsilon)|\ {\rm d}x + \int_{|\tilde\vu_\varepsilon|>\frac 1\varepsilon} \tilde \varrho_\varepsilon |F_k(\tilde\varrho_\varepsilon,\tilde\vu_\varepsilon)|\ {\rm d}x\\
 \leq \varepsilon \int_\TT |F_k(\tilde\varrho_\varepsilon,\tilde\vu_\varepsilon)|\ {\rm d}x +\varepsilon \int_\TT \tilde\varrho_\varepsilon |\tilde\vu_\varepsilon| |F_k(\tilde\varrho_\varepsilon,\tilde \vu_\varepsilon)|\ {\rm d}x \leq \varepsilon f_k \int_\TT 1 + |\tilde\vu|_\varepsilon + \tilde \varrho_\varepsilon  |\tilde \vu|_\varepsilon + \tilde \varrho_\varepsilon |\tilde\vu_\varepsilon|^2\ {\rm d}x.
\end{multline*}
Next,
$$
\int_\TT \tilde \varrho_\varepsilon (F_k(\tilde\varrho_\varepsilon,\tilde \vu_\varepsilon) - F_k(\tilde\varrho_\varepsilon, \tilde \vu))\varphi\ {\rm d}x\leq c f_k \|\varphi\|_{L^\infty(\TT)} \|\sqrt{\tilde\varrho_\varepsilon}\|_{L^2(\TT)}\|\tilde\varrho_\varepsilon |\tilde \vu_\varepsilon - \tilde\vu|^2 \|_{L^1(\TT)}^{\frac 12}
$$
and the right hand side tends to zero due to the convergencies obtained by use of the Jakubowski-Skorokhod theorem. To show that 
$$
\tilde \varrho_\varepsilon F_k(\tilde \varrho_\varepsilon,\tilde \vu)\to \overline{\tilde \varrho F_k(\tilde\varrho,\tilde\vu)}
$$
we use the renormalized equation of continuity
\begin{equation*}
\pat b(\tilde\varrho_\varepsilon) + \diver (b(\tilde\varrho_\varepsilon) \tilde \vu_\varepsilon) + (b'(\tilde\varrho_\varepsilon)\tilde\varrho_\varepsilon - b(\tilde\varrho_\varepsilon))\diver \tilde \vu_\varepsilon = \varepsilon \diver(b'(\tilde\varrho_\varepsilon)\nabla\tilde\varrho_\varepsilon) - \varepsilon b''(\tilde\varrho_\varepsilon) |\nabla\tilde\varrho_\varepsilon|^2
\end{equation*}
to obtain a bound on $\pat b(\tilde\varrho_\varepsilon)$ and, therefore, also
$$
b(\tilde\varrho_\varepsilon)\to \overline b(\tilde \varrho)\ \mbox{in }C([0,T],L^2(\TT),w).
$$
Thus, for any $b$ and $B$ which are globally Lipschitz we obtain
$$
b(\tilde\varrho_\varepsilon) B(\tilde \vu) \to \overline{b(\tilde\varrho)B(\tilde\vu)}\ {\mbox{in }} L^2((0,T), W^{-l,2}(\TT)).
$$
Now it is enough to approximate $\varrho F_k(\varrho,\vu)$ by finite sums $\sum_i b_i(\varrho)B_i(\vu)$ and \eqref{eq:F.convergence} follows.

It remains to prove that $\overline{p_\delta(\tilde \varrho)} = p_\delta (\tilde \varrho)$ and $\overline{\tilde \varrho \mathbb \vG(\tilde \varrho,  \tilde \vu)} = \tilde \varrho \mathbb \vG(\tilde \varrho,  \tilde \vu)$. 
In order to do so, we use the method of effective viscous flux to prove that
\begin{equation}\label{eq:outcome.of.evf}
\int_\TT \overline{\tilde\varrho \log(\tilde\varrho)}(t,\cdot) - \tilde\varrho \log(\tilde\varrho)(t,\cdot)\ {\rm d}x \leq 0
\end{equation}
holds for all $t\in [0,T]$. Since the mapping $\tilde \varrho \mapsto \tilde\varrho \log (\tilde\varrho)$ is convex, we obtain
\begin{equation*}
\tilde\varrho_\varepsilon \to \tilde\varrho \quad \mbox{in } L^1((0,T), L^1(\TT)).
\end{equation*}
This is sufficient to get 
\begin{equation*}
\overline{p_\delta(\tilde\varrho)}= p_\delta(\tilde\varrho)\quad \mbox{and}\quad \overline{\tilde\varrho \vG(\tilde\varrho,\tilde\vu)} = \tilde\varrho  \vG(\tilde\varrho,\tilde\vu).
\end{equation*}

Let focus on \eqref{eq:outcome.of.evf}. First of all, we use $\nabla\Delta^{-1}(\tilde\varrho - (\tilde\varrho)_\TT)$ as a test function in \eqref{limita.po.epsilon} and we use $\nabla\Delta^{-1}(\tilde\varrho_\varepsilon - (\tilde\varrho_\varepsilon)_\TT)$ as a test function in \eqref{eq:mom.approx1}. We get
\begin{multline}
\int_0^T\int_{\mathbb T} \overline{p_\delta(\tilde\varrho)} \tilde\varrho - (\lambda + 2\mu)\diver \tilde\vu \tilde\varrho \ {\rm d}t{\rm d}t = \int_0^T\int_{\mathbb T} \overline{p_\delta(\tilde\varrho)} (\tilde\varrho)_{\mathbb T} \ {\rm d}x{\rm d}t\\
 - \int_0^T\int_\TT (\lambda + 2\mu) \diver \tilde\vu (\tilde\varrho)_\TT\ {\rm d}x{\rm d}t + \int_0^T\int_\TT \tilde\varrho\nabla K*\tilde\varrho \nabla\Delta^{-1}(\tilde\varrho - (\tilde\varrho)_\TT ) \ {\rm d}x{\rm d}t\\
-\int_0^T\int_\TT\tilde\varrho\nabla\Delta^{-1}(\tilde\varrho - (\tilde\varrho)_\TT) \left(\psi *(\tilde\varrho\tilde\vu) - \tilde\vu(\psi*\tilde\varrho)\right)\ {\rm d}x{\rm d}t\\
+\int_\TT \tilde\varrho (T,\cdot)\tilde\vu(T,\cdot) \nabla \Delta^{-1}\left(\tilde\varrho - (\tilde\varrho)_\TT\right)\ {\rm d}x - \int_\TT \tilde\varrho_0 \tilde\vu_0 \nabla\Delta^{-1} \left(\tilde\varrho_0 - (\tilde\varrho_0)_\TT\right)\ {\rm d}x\\
 + \sum_{k=1}^\infty \int_0^T\int_\TT \overline{\vG_k(\tilde\varrho,\tilde\varrho\tilde\vu)} \nabla\Delta^{-1}\left(\tilde\varrho - (\tilde\varrho)_\TT\right)\ {\rm d}x{\rm d}W\\
+ \int_0^T\int_\TT \tilde\varrho \tilde \vu_i \left(\partial_i \Delta^{-1}(\partial_j\left(\tilde\varrho\tilde\vu_j\right))   -\tilde\vu_j\partial_i \partial_j \Delta^{-1}(\tilde\varrho - (\tilde\varrho)_\TT)\right)\ {\rm d}x{\rm d}t\label{test.po.e}
\end{multline}
and
\begin{multline}
\int_0^T\int_{\mathbb T} \overline{p_\delta(\tilde\varrho_\varepsilon)} \tilde\varrho_\varepsilon - (\lambda + 2\mu)\diver \tilde\vu_\varepsilon \tilde\varrho_\varepsilon \ {\rm d}t{\rm d}t = \int_0^T\int_{\mathbb T} \overline{p_\delta(\tilde\varrho_\varepsilon)} (\tilde\varrho_\varepsilon)_{\mathbb T} \ {\rm d}x{\rm d}t\\
 - \int_0^T\int_\TT (\lambda + 2\mu) \diver \tilde\vu_\varepsilon (\tilde\varrho_\varepsilon)_\TT\ {\rm d}x{\rm d}t + \int_0^T\int_\TT \tilde\varrho_\varepsilon\nabla K*\tilde\varrho_\varepsilon \nabla\Delta^{-1}(\tilde\varrho_\varepsilon - (\tilde\varrho_\varepsilon)_\TT ) \ {\rm d}x{\rm d}t\\
-\int_0^T\int_\TT\tilde\varrho_\varepsilon\nabla\Delta^{-1}(\tilde\varrho_\varepsilon - (\tilde\varrho_\varepsilon)_\TT) \left(\psi *(\tilde\varrho_\varepsilon\tilde\vu_\varepsilon) - \tilde\vu_\varepsilon(\psi*\tilde\varrho_\varepsilon)\right)\ {\rm d}x{\rm d}t\\
+\int_\TT \tilde\varrho_\varepsilon (T,\cdot)\tilde\vu_\varepsilon(T,\cdot) \nabla \Delta^{-1}\left(\tilde\varrho_\varepsilon - (\tilde\varrho_\varepsilon)_\TT\right)\ {\rm d}x - \int_\TT \tilde\varrho_0 \tilde\vu_0 \nabla\Delta^{-1} \left(\tilde\varrho_0 - (\tilde\varrho_0)_\TT\right)\ {\rm d}x\\
 + \sum_{k=1}^\infty \int_0^T\int_\TT \overline{\vG_k(\tilde\varrho_\varepsilon,\tilde\varrho_\varepsilon\tilde\vu_\varepsilon)} \nabla\Delta^{-1}\left(\tilde\varrho_\varepsilon - (\tilde\varrho_\varepsilon)_\TT\right)\ {\rm d}x{\rm d}W\\
+ \int_0^T\int_\TT \tilde\varrho_\varepsilon \tilde \vu_{\varepsilon i} \left(\partial_i \Delta^{-1}(\partial_j\left(\tilde\varrho_\varepsilon\tilde\vu_{\varepsilon j}\right))   -\tilde\vu_{\varepsilon j}\partial_i \partial_j \Delta^{-1}(\tilde\varrho_\varepsilon - (\tilde\varrho_\varepsilon)_\TT)\right)\ {\rm d}x{\rm d}t\\
-\varepsilon\int_0^T\int_\TT \nabla(\tilde\varrho_\varepsilon\tilde\vu_\varepsilon)\nabla^2\Delta^{-1}(\tilde\varrho_\varepsilon - (\tilde\varrho_\varepsilon)_\TT)\ {\rm d}x{\rm d}t + \varepsilon \int_0^T\int_\TT \tilde\varrho_\varepsilon\tilde\vu_\varepsilon\nabla\tilde\varrho_\varepsilon\ {\rm d}x{\rm d}t\label{test.pred.e}
\end{multline}
Next, we have
\begin{equation*}
\nabla\Delta^{-1}(\tilde\varrho_\varepsilon- (\tilde\varrho_\varepsilon)_\TT)\to \nabla\Delta^{-1}(\tilde\varrho - (\tilde\varrho)_\TT) \ \mbox{ in }L^\infty((0,T),L^q(\Omega))\ \mbox{for all } q\in (1,\infty)
\end{equation*}
Due to \cite[Lemma 3.4]{FeNoPe}  we infer
$$
\tilde\varrho_\varepsilon \partial_i \Delta^{-1} \partial_j (\tilde\varrho_\varepsilon \tilde vu_{\varepsilon j}) - \tilde\vu_{\varepsilon j} \partial_i\partial_j \Delta^{-1}(\tilde\varrho_\varepsilon) \to \tilde\varrho \partial_i (\Delta^{-1} \partial_j (\tilde\varrho \tilde\vu_j)) - \tilde\vu_j \partial_i\partial_j \Delta^{-1}(\tilde\varrho)
$$
strongly in $L^\infty_{weak} ((0,T),L^{2\Gamma/(\Gamma+3)}(\TT))$. Consequently, every term in \eqref{test.pred.e} converge to its counterpart in \eqref{test.po.e} (with help of convergences deduced by Jakubowski-Skorokhod theorem) and we get
\begin{equation}\label{eq:evf}
\lim_{\varepsilon \to 0} \int_0^T\int_\TT p_\delta(\tilde\varrho_\varepsilon)\tilde\varrho_\varepsilon - (\lambda+2\mu) \diver \tilde\vu_\varepsilon \tilde\varrho_\varepsilon\ {\rm d}x{\rm d}t = \int_0^T\int_\TT \overline{p_\delta(\tilde\varrho)}\tilde\varrho - (\lambda+2\mu)\diver\tilde\vu \tilde\varrho\ {\rm d}x{\rm d}t
\end{equation}
Recall that $\tilde\varrho_\varepsilon,\ \tilde\vu_\varepsilon$ are regular enough  to  fulfill 
\begin{equation*}
\pat b(\tilde\varrho_\varepsilon) + \diver (b(\tilde\varrho_\varepsilon)\tilde\vu_\varepsilon) +  (b'(\tilde\varrho_\varepsilon)\tilde\varrho_\varepsilon - b(\tilde\varrho_\varepsilon))\diver \tilde\vu_\varepsilon \leq \varepsilon \Delta b(\tilde\varrho_\varepsilon)
\end{equation*}
for every $b$ convex and globally Lipschitz. Moreover, the renormalized continuity equation is true also for $\tilde\varrho$ and $\tilde\vu$ in the following form
\begin{equation*}
\pat b(\tilde\varrho) + \diver(b(\tilde\varrho)\tilde\vu) + (b'(\tilde\varrho)\tilde\varrho - b(\tilde\varrho))\diver\tilde\vu = 0.
\end{equation*}
We take $b(\varrho) = \varrho \log(\varrho)$ in the renormalized continuity equations and we integrate them over $(0,t)\times\TT$ to get
\begin{equation*}
\int_0^t\int_\TT \tilde\varrho_\varepsilon \diver \tilde\vu \ {\rm d}x{\rm d}t \leq \int_\TT\varrho_0 \log \varrho_0 \ {\rm d}x - \int_\TT \tilde\varrho_\varepsilon(t,\cdot) \log \tilde\varrho_\varepsilon(t,\cdot)\ {\rm d}x
\end{equation*}
and
\begin{equation*}
\int_0^t\int_\TT \tilde\varrho\diver\tilde\vu \ {\rm d}x{\rm d}t  = \int_\TT \varrho_0 \log\varrho_0 \ {\rm d}x - \int_\TT \tilde\varrho(t,\cdot) \log \tilde\varrho(t,\cdot)\ {\rm d} x.
\end{equation*}
We subtract these two equations and we send $\varepsilon$ to zero to deduce
\begin{equation*}
\int_\TT \overline{\tilde\varrho\log (\tilde\varrho)} - \tilde\varrho\log\tilde\varrho)(t,\cdot) \ {\rm d}x + \int_0^t\int_\TT (\overline{\tilde\varrho\diver\tilde\vu} - \tilde\varrho\diver\tilde\vu)\ {\rm  d}x{\rm d}t\leq 0.
\end{equation*}
The monotonicity of pressure yields
\begin{equation*}
\lim_{\varepsilon\to 0} \int_0^t \int_\TT p_\delta(\tilde\varrho_\varepsilon)\tilde\varrho_\varepsilon \ {\rm d}x{\rm d}t \geq \int_0^t \int_\TT \overline{p_\delta(\tilde\varrho)}\tilde\varrho\ {\rm d}x{\rm d}t
\end{equation*}
and we deduce with help of \eqref{eq:evf} that
\begin{equation*}
\int_0^t \int_\TT (\overline{\tilde\varrho\diver\tilde\vu} - \tilde\varrho\diver\tilde\vu)\ {\rm d}x{\rm d}t\geq 0
\end{equation*}
and \eqref{eq:outcome.of.evf} follows.

In order to pass to a limit in the energy inequality, we have to show that
\begin{equation*}
\int_0^t\int_\TT \tilde\varrho_\varepsilon \mathbb F_\varepsilon(\tilde\varrho_\varepsilon,\tilde\vu_\varepsilon)\tilde\vu_\varepsilon\ {\rm d}x{\rm d}\tilde W_\varepsilon\to \int_0^t\int_\TT \mathbb G(\tilde\varrho,\tilde\vu) \ {\rm d}x{\rm d}\tilde W
\end{equation*}
However, the proof of this inequality does not differ from the proof of \cite[Proposition 4.4.13]{BrFeHo}.

Thus, we have just proven that for any $\delta>0$ there exists a solution satisfying the continuity equation
\begin{equation*}
{\rm d}\varrho + \diver \varrho \vu {\rm d}t = 0
\end{equation*}
in a renormalized sense,
the momentum equation
\begin{multline*}
\int_0^T\int_\TT \varrho \vu \cdot \pat\varphi \ {\rm d}x{\rm d}t  + \int_\TT \varrho_0\vu_0 \varphi(0,\cdot)\ {\rm d}x -\int_\TT \varrho(T,\cdot)\vu(T,\cdot) \varphi(T,\cdot)\ {\rm d}x\\
+ \int_0^T \int_\TT \varrho \vu\otimes\vu:\nabla \varphi + p_\delta(\varrho)\diver \varphi\ {\rm d}x{\rm d}t
 - \int_0^T\int_\TT S(D\vu): D\varphi  \ {\rm d}x{\rm d}t = \\
 +
\int_0^T\int_\TT \varrho \nabla K*\varrho  \varphi \ {\rm d}x{\rm d}t\\
 - \int_0^T\int_\TT\varrho(t,x) \varphi(x) \int_\TT \varrho(t,y) \psi(x-y) (\vu(t,y) - \vu(t,x)) \ {\rm d}x{\rm d}y{\rm d}t\\
+ \sum_{k=1}^\infty\int_0^T\int_\TT \vG_k(\varrho,\varrho\vu)\cdot\varphi \ {\rm d}x{\rm d}W_k
\end{multline*}
$\mathbb P$ almost surely for all test function $\varphi\in C^\infty_c([0,T]\times\TT)$,
and 
the energy inequality
\begin{multline}\label{eq:ene.ine.po.eps}
\frac 12 \int_{\mathbb T} \varrho |\vu|^2(T,\cdot) \Phi(T)\ {\rm d}x  - \frac 12 \int_{\mathbb T}\varrho |\vu|^2(0,\cdot) \Phi(0)\ {\rm d}x + \int_0^T \int_{\mathbb T} S(D\vu):D\vu  \Phi\ {\rm d}x{\rm d}t\\
 + \varepsilon \int_0^T \int_{\mathbb T} \varrho |\nabla \vu|^2 \Phi \ {\rm d}x{\rm d}t + \frac 12 \int_{\mathbb T} \varrho K*\varrho(T,\cdot) \Phi(T)\ {\rm d}x - \frac 12 \int_{\mathbb T} \varrho K*\varrho (0,\cdot) \Phi(0)\ {\rm d}x \\ 
 + \int_0^T \int_{\mathbb T} \int_{\mathbb T} \varrho(t,x) \psi(x-y) \varrho(t,y) (\vu(t,y) - \vu(t,x))^2\Phi(t) \ {\rm d}x{\rm d}y {\rm d}t\\
 + \int_{\mathbb T} P_\delta (\varrho)(T,\cdot) \Phi(T) \ {\rm d} x - \int_{\mathbb T} P_{\delta}(\varrho)(0,\cdot)\ {\rm d}x \leq \frac 12 \int_0^T \int_{\mathbb T} \varrho K*\varrho \partial_t \Phi \ {\rm d}x {\rm d}t \\
+\int_0^T\int_{\mathbb T}(K*\varrho)\diver (\varrho (\vu - [\vu]_R))\Phi\ {\rm d}x{\rm d}t + \frac 12 \int_0^T\int_{\mathbb T} \varrho \vu^2 \partial_t \Phi \ {\rm d}x{\rm d}t\\
 + \int_0^T\int_{\mathbb T} P_\delta(\varrho) \partial_t\Phi\ {\rm d}x{\rm d}t
 + \int_0^T \int_{\mathbb T}  \vu \mathbb G(\varrho,\vu) \Phi \ {\rm d}x{\rm d}W + \frac 12 \int_0^T \int_{\mathbb T} \varrho^{-1} (\mathbb G(\varrho,\vu))^2\Phi \ {\rm d}x{\rm d}t
\end{multline}
for all $\Phi\in C^\infty([0,T])$.

\subsection{Limit $\delta \to 0$}
First, we construct initial conditions in such a way that $\varrho_{0,\delta}$ and $\vq_{0,\delta}$ are given by the law mentioned in the previous subsection. In particular, let 
$\Lambda$ be a probability measure satisfying assumptions of the main theorem. 
 We take $\varrho_0$ and $\vq_0$ in such a way that their law is $\Lambda$. Next, we choose $\varrho_{0,\delta}\in L^\Gamma(\TT)$ $\mathbb P$-almost surely  and $\vq_{0,\delta}$ such that for every $\delta>0$ the corresponding law satisfies \eqref{eq:Lambda.epsilon} and \eqref{eq:Lambda.epsilon.2} and the following convergencies are true for $\delta \to 0$:
\begin{equation*}
\begin{split}
&\varrho_{0,\delta}\to \varrho_0\ \mbox{in }L^\gamma(\TT)\ \mathbb P\mbox{-almost surely}\\
&\frac{\vq_{0,\delta}}{\sqrt{\varrho_{0,\delta}}}\to \frac{\vq_0}{\sqrt{\varrho_0}}\ \mbox{in }L^2(\TT)\ \mathbb P\mbox{-almost surely}\\
&\int_\TT \frac12 \frac{|\vq_{0,\delta}|^2}{\varrho_{0,\delta}} + P_\delta(\varrho_{0,\delta})\ {\rm d}x\to \int_\TT \frac12 \frac{|\vq_{0}|^2}{\varrho_{0}} + P(\varrho_0)\ {\rm d}x
\end{split}
\end{equation*}

Let $(\varrho_\delta,\vu_\delta)$ be a solution as presented in the previous subsection. Our aim is to send $\delta$ to zero in order to obtain a dissipative martingale solution to the system introduced in the beginning of this paper. 

The energy inequality \eqref{eq:ene.ine.po.eps} yields
\begin{equation}
\begin{split}\label{ene.est.for.delta}
\mathbb E\left(\left|\sup_{t\in [0,T]} \|\varrho_\delta\|_{L^\gamma(\TT)}^\gamma\right|^r\right)&\leq c\\
\mathbb E\left(\left|\sup_{t\in [0,T]} \delta\|\varrho_\delta\|_{L^\Gamma(\TT)}^\Gamma\right|^r\right) & \leq c\\
\mathbb E\left(\left| \sup_{t\in[0,T]}\|\varrho_\delta|\vu_\delta|^2\|_{L^1(\TT)}\right|^r\right) & \leq c\\
\mathbb E\left(\left| \sup_{t\in [0,T]} \|\varrho_\delta \vu_\delta\|_{L^{2\gamma/(\gamma+1)}(\TT)}^{2\gamma/(\gamma+1)}\right|^r\right) & \leq c\\
\mathbb E\left(\|\vu\|_{L^2((0,T),W^{1,2}(\TT))}^{2r}\right) & \leq c
\end{split}
\end{equation}
where $c\in \mathbb R$ is independent of $\delta$. Further, we have
\begin{Lemma}
There is $\beta>0$ such that
\begin{equation*}
\mathbb E\left(\left|\int_0^T\int_\TT (p(\varrho_\delta) + \delta(\varrho_\delta^\Gamma+\varrho_\delta^2))\varrho_\delta^\beta\ {\rm d}x{\rm d}t\right|\right)\leq c
\end{equation*}
with $c$ independent of $\delta$. 
\end{Lemma}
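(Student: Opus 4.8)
The plan is to repeat the argument of Lemma~\ref{lem.roucha}, this time with the time dependent test field
\[
\varphi_\delta := \nabla\Delta^{-1}\big(\varrho_\delta^{\beta}-(\varrho_\delta^{\beta})_{\TT}\big)
\]
for a small exponent $\beta\in(0,1)$ to be fixed at the end (here $(\cdot)_\TT$ denotes the spatial average). Since $\varphi_\delta$ depends on $t$, one first invokes the renormalized continuity equation for $(\varrho_\delta,\vu_\delta)$ with the nonlinearity $b(s)=s^{\beta}$ --- legitimate after a routine truncation of $b$ near the origin, $b'(s)s-b(s)=(\beta-1)s^{\beta}$ being sublinear --- to obtain
\[
\pat\varrho_\delta^{\beta}+\diver(\varrho_\delta^{\beta}\vu_\delta)+(\beta-1)\varrho_\delta^{\beta}\diver\vu_\delta=0 ,
\]
whence, using the continuity of $\Delta^{-1}$ on $W^{k,p}(\TT)$ already exploited in Lemma~\ref{lem.roucha}, one controls $\pat\varphi_\delta$ in a suitable negative Sobolev space. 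Applying Itô's product rule to $\int_\TT\varrho_\delta\vu_\delta\cdot\varphi_\delta\,{\rm d}x$ (there is no cross variation, the continuity equation at this level being deterministic; the admissibility of $\varphi_\delta$ as a test function is justified as in \cite[Theorem~A.4.1]{BrFeHo}) and rearranging exactly as in Lemma~\ref{lem.roucha}, the quantity $\int_0^T\int_\TT p_\delta(\varrho_\delta)\varrho_\delta^{\beta}\,{\rm d}x{\rm d}t$ is isolated on the left-hand side, while the right-hand side becomes a finite sum $\sum_i\mathcal I_i$ of terms structurally identical to those met there (with $\varrho$ replaced by $\varrho_\delta^{\beta}$ in the test function and without the $\varepsilon$-terms), together with a stochastic term $\mathcal I_{\mathrm{st}}=\sum_k\int_0^T\int_\TT\vG_k(\varrho_\delta,\varrho_\delta\vu_\delta)\cdot\varphi_\delta\,{\rm d}x\,{\rm d}W_k$.

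By the very definition $p_\delta(s)=p(s)+\delta(s^{\Gamma}+s^2)$ (see \eqref{def.pdelta}), the integrand in the claimed bound equals $p_\delta(\varrho_\delta)\varrho_\delta^{\beta}$; modulo the contribution of $(\varrho_\delta^{\beta})_\TT\int_\TT p_\delta(\varrho_\delta)\,{\rm d}x$, which is controlled uniformly in $\delta$ by \eqref{ene.est.for.delta} and the mass conservation law, this is precisely the term isolated on the left. Hence it suffices to bound $\mathbb{E}|\mathcal I_i|$ and $\mathbb{E}|\mathcal I_{\mathrm{st}}|$ by constants independent of $\delta$. Each $\mathcal I_i$ is estimated by the energy bounds \eqref{ene.est.for.delta}, mass conservation, the additional regularity $\vu_\delta\in L^2((0,T),L^6(\TT))$ (obtained from \eqref{ene.est.for.delta} exactly as in Lemma~\ref{lem.roucha}) and the smoothness of $K$ and $\psi$; the convolution and viscous contributions are lower order and cause no difficulty, and $\mathcal I_{\mathrm{st}}$ is handled by the Burkholder--Davis--Gundy inequality together with \eqref{G.law.2}, just as $\mathcal I_{10}$ was in Lemma~\ref{lem.roucha}.

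The size of $\beta$ is relevant only in the two terms pairing $\varrho_\delta\vu_\delta\otimes\vu_\delta$ with $\nabla^2\Delta^{-1}(\varrho_\delta^{\beta}-\cdot)$ and $\varrho_\delta\vu_\delta$ with $\nabla\Delta^{-1}\diver(\varrho_\delta\vu_\delta)$: by Hölder's inequality, the continuity of $\nabla^2\Delta^{-1}$ on $L^p(\TT)$ and the Sobolev embedding, these are bounded as soon as $\varrho_\delta^{1+\beta}$ lies in a Lebesgue space dual to the space-time integrability of $\varrho_\delta|\vu_\delta|^2$ that one gets by interpolating $\varrho_\delta|\vu_\delta|^2\in L^\infty((0,T),L^1(\TT))$ against $\vu_\delta\in L^2((0,T),L^6(\TT))$ and using $\varrho_\delta\in L^\infty((0,T),L^{\gamma}(\TT))$; a short computation shows this holds whenever $0<\beta\le\tfrac23\gamma-1$, and since $\gamma>\tfrac32$ this range is nonempty --- we fix such a $\beta$. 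The main obstacle is therefore essentially bookkeeping: verifying that one and the same $\beta$ (depending only on $\gamma$) makes every term close with a $\delta$-independent constant, and carefully justifying the renormalized continuity equation for the non-Lipschitz nonlinearity $s\mapsto s^{\beta}$ and the resulting estimate on $\pat\varphi_\delta$.
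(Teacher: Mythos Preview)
Your proposal is correct and follows essentially the same route as the paper: test the momentum equation with $\nabla\Delta^{-1}(\varrho_\delta^{\beta}-(\varrho_\delta^{\beta})_\TT)$, use the renormalized continuity equation to handle the time derivative of the test function, and bound each resulting term via the energy estimates \eqref{ene.est.for.delta}, arriving at the same restriction $\beta<\tfrac23\gamma-1$. One small slip: in your discussion of the critical terms you write the pairing of $\varrho_\delta\vu_\delta$ with $\nabla\Delta^{-1}\diver(\varrho_\delta\vu_\delta)$, but the time-derivative term actually produces $\nabla\Delta^{-1}\diver(\varrho_\delta^{\beta}\vu_\delta)$ and $\nabla\Delta^{-1}((1-\beta)\varrho_\delta^{\beta}\diver\vu_\delta)$; this does not affect the argument or the resulting range of $\beta$.
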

\begin{proof}
It suffices to test the momentum equation by
$
\nabla\Delta^{-1}(\varrho^\beta - (\varrho^\beta)_\TT)
$
where $\beta>0$ is a suitable parameter determined later. 
We obtain
\begin{multline*}
\int_0^T\int_\TT p_\delta(\varrho_\delta) \varrho_\delta^\beta \ {\rm d}x{\rm d}t = \int_0^T\int_\TT p_\delta(\varrho_\delta) (\varrho_\delta^\beta)_\TT \ {\rm d}x{\rm d}t \\
+ \int_\TT \varrho_\delta(T,\cdot)\vu_\delta(T,\cdot) \nabla\Delta^{-1} (\varrho_\delta^\beta - (\varrho_\delta^\beta)_\TT)\ {\rm d}x\\
 + \int_0^T\int_\TT S(D\vu_\delta): \nabla^2\Delta^{-1} (\varrho^\beta_\delta - (\varrho_\delta^\beta)_\TT)\ {\rm d}x{\rm d}t - \int_0^T\int_\TT \varrho_\delta\vu_\delta \nabla\Delta^{-1} (\pat \varrho_\delta^\beta)\ {\rm d}x{\rm d}t\\
- \int_\TT\varrho_0\vu_0 \nabla\Delta^{-1}(\varrho_0^\beta - (\varrho_0^\beta)_\TT)\ {\rm d}x{\rm d}t + \int_0^T\int_\TT \varrho_\delta \nabla K*\varrho_\delta \nabla \Delta^{-1} (\varrho_\delta^\beta - (\varrho_\delta^\beta)_\TT)\ {\rm d}x{\rm d}t\\
 - \int_0^T\int_\TT \varrho_\delta \nabla\Delta^{-1} (\varrho^\beta_\delta - (\varrho^\beta_\delta)_\TT) \left(\psi*(\varrho_\delta\vu_\delta) - \vu_\delta (\psi*\varrho_\delta)\right)\ {\rm d}x{\rm d}t\\
+\sum_{k=1}^\infty \int_0^T\int_\TT \vG_k(\varrho_\delta,\varrho_\delta\vu_\delta) \nabla\Delta^{-1} (\varrho_\delta^\beta - (\varrho_\delta^\beta)_\TT)\ {\rm d}x{\rm d}W_k\\
 - \int_0^T\int_\TT \varrho_\delta\vu_\delta\otimes\vu_\delta :\nabla^2\Delta^{-1} (\varrho_\delta^\beta - (\varrho_\delta^\beta)_\TT)\ {\rm d}x{\rm d}t
\end{multline*}
First, let focus on the fourth term on the right hand side. We deduce from the continuity equation that
$$
\pat (\varrho_\delta^\beta) = -\diver (\varrho_\delta^\beta \vu_\delta)  + (1-\beta) \varrho^\beta_\delta \diver \vu_\delta.
$$
Recall also that $\Delta^{-1}: W^{k,p}(\TT)\mapsto W^{k+2,p}(\TT)$ for all $k\in \{-1,0,1,\ldots\}$ and $p\in (1,\infty)$.
We deduce from \eqref{ene.est.for.delta} $\mathbb E \left(\|\varrho_\delta^\beta \diver \vu_\delta\|^r_{L^2((0,T),L^{2\gamma/(\gamma + 2\beta)}(\TT))}\right)\leq c$. We may also infer that $\mathbb E \left(\|\varrho_\delta\vu_\delta\|^r_{L^\infty((0,T),L^{2\gamma/(\gamma+1)}(\TT))}\right)\leq c$ and $\mathbb E \left(\|\varrho_\delta^\beta \vu_\delta\|_{L^2((0,T),L^{6\gamma/(\gamma+6\beta)}(\TT))}^r\right)\leq c$. Hence, we deduce that the fourth term is bounded assuming $\beta< \frac 23 \gamma - 1$. The remaining terms can be bound by use of \eqref{ene.est.for.delta} (compare also with the previous section and the proof of Lemma \ref{lem.roucha}).
\end{proof}

Now we are ready to use the Jakubowski-Skorokhod theorem. In particular, the set of laws
$$
\left\{\mathcal L\left[\varrho_{0,\delta},\vq_{0,\delta}, \frac{\vq_{0,\delta}}{\sqrt{\varrho_{0,\delta}}}, \varrho_\delta,\varrho_\delta\vu_\delta, W, E_\delta(\varrho_\delta,\vu_\delta), \nu_\delta\right]\right\}
$$
is tight in a space
$$
\mathcal X^\delta = \mathcal X^\delta_{\varrho_0}\times\mathcal X_{\vq_0}\times \mathcal X_{\vq_0/\sqrt{\varrho_0}}\times \mathcal X^\delta_\varrho\times\mathcal X^\delta_{\varrho\vu}\times\mathcal X_\vu \times\mathcal X_W \times \mathcal X_E\times\mathcal X_\nu
$$
where most of the spaces where defined in the previous subsection and, apart of them, we define
\begin{equation*}
\begin{split}
\mathcal X^\delta_{\varrho_0} & = L^\gamma(\TT)\\
\mathcal X^\delta_{\varrho} & = L^{\gamma+\beta}(((0,T)\times \mathbb T^3),w)\cap C_w([0,T],L^\gamma(\TT))\\
\mathcal X^\delta_{\varrho\vu} & = C_w([0,T],L^{2\gamma/(\gamma+1)}(\TT))\cap C([0,T],W^{-k,2}(\TT)),\ k>3/2.
\end{split}
\end{equation*}

Similarly to previous subsection we deduce the tightness of the above-mentioned law and, therefore, we obtain the existence of 
$$
\left(\tilde\varrho_{0,\delta},\tilde\vq_{0,\delta}, \tilde{\frac{\vq_{0,\delta}}{\sqrt{\varrho_{0,\delta}}}}, \tilde\varrho_\delta,\widetilde{\varrho_\delta\vu_\delta}, \tilde W, \tilde E_\delta(\varrho_\delta,\vu_\delta), \nu_\delta\right)
$$
and
$$
\left(\tilde\varrho_{0},\tilde\vq_{0}, \tilde{\frac{\vq_{0}}{\sqrt{\varrho_{0}}}}, \tilde\varrho,\widetilde{\varrho\vu}, \tilde W, \tilde E(\varrho,\vu), \nu\right)
$$
which fulfill
\begin{enumerate}
\item the laws of 
$$
\left(\varrho_{0,\delta},\vq_{0,\delta}, \frac{\vq_{0,\delta}}{\sqrt{\varrho_{0,\delta}}}, \varrho_\delta,\varrho_\delta\vu_\delta, W, E_\delta(\varrho_\delta,\vu_\delta), \nu_\delta\right)
$$
and
$$
\left(\tilde\varrho_{0,\delta},\tilde\vq_{0,\delta}, \tilde{\frac{\vq_{0,\delta}}{\sqrt{\varrho_{0,\delta}}}}, \tilde\varrho_\delta,\widetilde{\varrho_\delta\vu_\delta}, \tilde W, \tilde E_\delta(\varrho_\delta,\vu_\delta), \nu_\delta\right)
$$
coincide,
\item the law of 
$$
\left(\tilde\varrho_{0},\tilde\vq_{0}, \tilde{\frac{\vq_{0}}{\sqrt{\varrho_{0}}}}, \tilde\varrho,\widetilde{\varrho\vu}, \tilde W, \tilde E(\varrho,\vu), \nu\right)
$$
is a Radon measure on $\mathcal X^\delta$,
\item 
$$
\left(\tilde\varrho_{0,\delta},\tilde\vq_{0,\delta}, \tilde{\frac{\vq_{0,\delta}}{\sqrt{\varrho_{0,\delta}}}}, \tilde\varrho_\delta,\widetilde{\varrho_\delta\vu_\delta}, \tilde W, \tilde E_\delta(\varrho_\delta,\vu_\delta), \nu_\delta\right)
$$
converge to 
$$
\left(\tilde\varrho_{0},\tilde\vq_{0}, \tilde{\frac{\vq_{0}}{\sqrt{\varrho_{0}}}}, \tilde\varrho,\widetilde{\varrho\vu}, \tilde W, \tilde E(\varrho,\vu), \nu\right)
$$
in the topology of $\mathcal X^\delta$ $\mathbb P$-almost surely.
\end{enumerate}
Due to the above-mentioned convergence, we get that the limit functions satisfy the continuity equation \eqref{main.sys}$_1$ and the momentum equation in the form
\begin{multline*}
\int_0^T\int_\TT \tilde\varrho \tilde\vu \cdot \pat\varphi \ {\rm d}x{\rm d}t  + \int_\TT \tilde\varrho_0\tilde\vu_0 \varphi(0,\cdot)\ {\rm d}x -\int_\TT \tilde\varrho(T,\cdot)\tilde\vu(T,\cdot) \varphi(T,\cdot)\ {\rm d}x\\
+ \int_0^T \int_\TT \tilde\varrho \tilde\vu\otimes\tilde\vu:\nabla \varphi + \overline{p\delta(\varrho)}\diver \varphi\ {\rm d}x{\rm d}t
 - \int_0^T\int_\TT S(D\tilde\vu): D\varphi  \ {\rm d}x{\rm d}t = \\
 +
\int_0^T\int_\TT \tilde\varrho \nabla K*\tilde\varrho  \varphi \ {\rm d}x{\rm d}t\\
 - \int_0^T\int_\TT\tilde\varrho(t,x) \varphi(x) \int_\TT \tilde\varrho(t,y) \psi(x-y) (\tilde\vu(t,y) - \tilde\vu(t,x)) \ {\rm d}x{\rm d}y{\rm d}t\\
+ \sum_{k=1}^\infty\int_0^T\int_\TT \overline{\vG_k(\varrho,\varrho\vu)}\cdot\varphi \ {\rm d}x{\rm d}W_k
\end{multline*}
where $\overline{p(\varrho)}$ is a weak limit of $p_\delta(\tilde\varrho_\delta)$ and, similarly, $\overline{\vG_k(\varrho,\varrho\vu)}$ is a weak limit of $\vG_k(\tilde\varrho_\delta,\tilde\varrho_\delta\tilde\vu_\delta)$ (here we note that the convergence of $\tilde\varrho_\delta\tilde\vu_\delta\otimes\tilde\vu_\delta$ to its counterpart can be obtained similarly as in the previous subsection).

 It remains to show that
$$
\overline{p(\varrho)} = p(\tilde\varrho),\ \mbox{ and }\overline{\vG_k(\varrho,\varrho\vu)} = \vG_k(\tilde\varrho,\tilde\varrho\tilde\vu).
$$
In order to reach this goal it is sufficient to show that $$\tilde\varrho_\delta\to \tilde\varrho$$ strongly in $L^1((0,T),L^1(\TT))$ $\mathbb P$ almost surely.

Let $k\in \mathbb N$. We define
\begin{equation*}
T_k(z) = kT\left(\frac zk\right),\ z\in \mathbb R
\end{equation*}
where $T$ is a smooth concave function satisfying
\begin{equation*}
T(z) = \left\{\begin{array}{l} z\ \mbox{for}\ z\leq 1\\ 2\ \mbox{for}\ z\geq 3\end{array}\right.	
\end{equation*}
We use $\Delta^{-1}\left(T_k(\tilde\varrho_\delta) - \left(T_k(\tilde\varrho_\delta)\right)_\TT\right)$ as a test function in the momentum equation in order to obtain
\begin{equation}\label{evf.for.delta}
\lim_{\delta\to0}\int_0^T\int_\TT \left(p_\delta(\tilde\varrho) - (\lambda + 2\mu)\diver \tilde \vu_\delta\right) T_k(\tilde \varrho_\delta)\ {\rm d}x{\rm d}t = \int_0^T\int_\TT \left(\overline{p(\tilde\varrho)} - (\lambda + 2\mu)\diver \tilde \vu\right) T_k(\tilde\varrho)\ {\rm d}x{\rm d}t.
\end{equation}
We do not provide a proof of this equality as it can be proven similarly to \eqref{eq:evf}.
Assume for a while that $\tilde\varrho_\delta, \ \tilde\vu_\delta$ as well as $\tilde\varrho$ and $\tilde \vu$ satisfies the renormalized continuity equation, i.e.,
\begin{equation}\begin{split}\label{renorm.eq.delta}
\pat b(\tilde\varrho_\delta) + \diver (b(\tilde\varrho_\delta) \tilde\vu_\delta) + (b'(\tilde\varrho_\delta)\tilde\varrho_\delta - b(\tilde\varrho_\delta))\diver\tilde\vu_\delta & = 0,\\
\pat b(\tilde\varrho) + \diver (b(\tilde\varrho) \tilde\vu) + (b'(\tilde\varrho)\tilde\varrho - b(\tilde\varrho))\diver\tilde\vu & = 0.\\
\end{split}\end{equation}
We take $b = L_k$ where $L_k$ is defined as
\begin{equation*}
L_k(z) = \left\{\begin{array}{l} z\log z\ \mbox{for }0\leq z\leq k,\\ z\log k + z\int_k^z \frac 1{s^2} T_k(s)\ {\rm d}s.\end{array}\right.
\end{equation*}
We deduce
\begin{equation*}
\pat L_k(\tilde\varrho_\delta) + \diver (L_k(\tilde\varrho_\delta)\tilde\vu_\delta) + T_k(\tilde\varrho_\delta)\diver \tilde\vu_\delta = 0
\end{equation*}
and
\begin{equation*}
\pat L_k(\tilde\varrho) + \diver (L_k(\tilde\varrho)\tilde\vu) + T_k(\tilde\varrho)\diver \tilde\vu = 0
\end{equation*}
and similarly to \eqref{eq:outcome.of.evf} we deduce
\begin{equation*}
\int_\TT L_k(\tilde\varrho(t,\cdot)) - \overline{L_k(\tilde\varrho(t,\cdot))}\ {\rm d}x \geq0
\end{equation*}
for almost all $t\in [0,T]$. We send $k\to\infty$ to deduce that $\tilde\varrho\log\tilde\varrho = \overline{\tilde\varrho\log\tilde\varrho}$ which yields $\tilde\varrho_\delta\to \tilde\varrho$ strongly in $L^1((0,T),L^1(\TT))$ almost surely in $\mathbb P$. 

It remains to prove that the renormalized equation \eqref{renorm.eq.delta}$_2$ holds true. 
Recall that $T_k$ is increasing and thus
\begin{multline*}
\int_0^T\int_\TT \tilde\varrho_\delta^\gamma T_k(\tilde\varrho_\delta) - \overline{\tilde\varrho^\gamma}\overline{T_k(\tilde\varrho)} \ {\rm d}x{\rm d}t \geq \int_0^T\int_\TT (\tilde\varrho_\delta^\gamma - \tilde\varrho^\gamma)(T_k(\tilde\varrho_\delta^\gamma) - T_k(\tilde\varrho))\ {\rm d}x{\rm d}t \\
\geq \int_0^T\int_\TT |T_k(\tilde\varrho_\delta) - T_k(\tilde\varrho)|^{\gamma+1}\ {\rm d}x{\rm d}t.
\end{multline*}
We deduce by help of \eqref{evf.for.delta} that
\begin{multline*}
0 = \lim_{\delta\to0} \int_0^T\int_\TT \tilde\varrho_\delta^\gamma T_k(\tilde\varrho_\delta) - \overline{\tilde\varrho^\gamma}\overline{T_k(\tilde\varrho)} - (\lambda + 2\mu)\left(\diver \tilde\vu_\delta T_k(\tilde\varrho_\delta) - \diver\tilde\vu \overline{T_k(\tilde\varrho)}\right)\ {\rm d}x{\rm d}t\\
\geq \|T_k(\tilde\varrho_\delta) - T_k(\tilde\varrho) \|_{L^{\gamma+1}((0,T)\times\TT)}^{\gamma+1} - 2(\lambda + 2\mu) \|\diver \tilde\vu_\delta\|_{L^2((0,T)\times\TT)}\|T_k(\tilde\varrho_\delta) - T_k(\tilde\varrho)\|_{L^2((0,T)\times\TT)}
\end{multline*}
and we get
\begin{equation*}
\mathbb E \left(\|T_k(\tilde\varrho_\delta) - T_k(\tilde\varrho)\|^{\gamma+1}_{L^{\gamma+1}((0,T)\times\TT)}\right) \leq c
\end{equation*}
with $c$ independent of $k$ and $\delta$. 

This is enough to deduce that
$$
\mathbb E \left(\int_0^T\int_\TT \overline{b'(T_k(\tilde\varrho))} \overline{\left(T'_k(\tilde\varrho)\tilde\varrho - T_k(\tilde\varrho)\right)\diver \tilde\vu} \ {\rm d}x{\rm d}t\right)\to 0
$$
and the validity of \eqref{renorm.eq.delta} follows -- compare this with the proof of \cite[Lemma 3.8]{FeNo}.\bigskip\\

\noindent {\bf Acknowledgment}: The work of V\'aclav M\'acha was supported by the Czech Science Foundation (GA\v CR), Grant Agreement GA18-05974S in the framework of RVO:67985840. The work of Pavel Ludv\'ik was supported by the Grant IGA\_PrF\_2021\_008 ``Mathematical Models'' of the Internal Grant Agency of Palack\'y University in Olomouc.

\bibliographystyle{plain}
\bibliography{literatura}

\end{document}